\DeclareFontFamily{U}{rsfs}{} \DeclareFontShape{U}{rsfs}{n}{it}{<->
rsfs10}{} \DeclareSymbolFont{mscr}{U}{rsfs}{n}{it}
\DeclareSymbolFontAlphabet{\scr}{mscr}
\def\mathscr{\scr}
\begin{document}
\def\e#1\e{\begin{equation}#1\end{equation}}
\def\ea#1\ea{\begin{align}#1\end{align}}
\def\eq#1{{\rm(\ref{#1})}}
\theoremstyle{plain}
\newtheorem{thm}{Theorem}[section]
\newtheorem{prop}[thm]{Proposition}
\newtheorem{lem}[thm]{Lemma}
\newtheorem{cor}[thm]{Corollary}
\newtheorem{quest}[thm]{Question}
\theoremstyle{definition}
\newtheorem{dfn}[thm]{Definition}
\newtheorem{ex}[thm]{Example}
\newtheorem{rem}[thm]{Remark}
\numberwithin{equation}{section}
\def\dim{\mathop{\rm dim}\nolimits}
\def\supp{\mathop{\rm supp}\nolimits}
\def\cosupp{\mathop{\rm cosupp}\nolimits}
\def\rank{\mathop{\rm rank}}
\def\coh{\mathop{\rm coh}\nolimits}
\def\id{\mathop{\rm id}\nolimits}
\def\Hess{\mathop{\rm Hess}\nolimits}
\def\Crit{\mathop{\rm Crit}}
\def\GL{\mathop{\rm GL}}
\def\SO{\mathop{\rm SO}\nolimits}
\def\Ker{\mathop{\rm Ker}}
\def\Im{\mathop{\rm Im}}
\def\Aut{\mathop{\rm Aut}}
\def\End{\mathop{\rm End}}
\def\Hom{\mathop{\rm Hom}\nolimits}
\def\Perv{\mathop{\rm Perv}\nolimits}
\def\Perf{\mathop{\rm Perf}\nolimits}
\def\Perfis{\mathop{\text{\rm Perf-is}}\nolimits}
\def\Spec{\mathop{\rm Spec}\nolimits}
\def\Lbis{\mathop{\text{\rm Lb-is}}\nolimits}
\newcommand{\SExt}{\mathcal{E}xt}
\newcommand{\STor}{\mathcal{T}or}
\newcommand{\SHom}{\mathcal{H}om}

\def\sh{\sharp}
\def\red{{\rm red}}
\def\an{{\rm an}}
\def\alg{{\rm alg}}
\def\bs{\boldsymbol}
\def\ge{\geqslant}
\def\le{\leqslant\nobreak}
\def\bD{{\mathbin{\mathbb D}}}
\def\bH{{\mathbin{\mathbb H}}}
\def\bL{{\mathbin{\mathbb L}}}
\def\bP{{\mathbin{\mathbb P}}}
\def\A{{\mathbin{\mathcal A}}}
\def\PP{{\mathbin{\mathcal P}}}
\def\O{{\mathbin{\cal O}}}
\def\cA{{\mathbin{\cal A}}}
\def\cB{{\mathbin{\cal B}}}
\def\cC{{\mathbin{\cal C}}}
\def\cD{{\mathbin{\scr D}}}
\def\cE{{\mathbin{\cal E}}}
\def\cF{{\mathbin{\cal F}}}
\def\cG{{\mathbin{\cal G}}}
\def\cH{{\mathbin{\cal H}}}
\def\cL{{\mathbin{\cal L}}}
\def\cM{{\mathbin{\cal M}}}
\def\cO{{\mathbin{\cal O}}}
\def\cP{{\mathbin{\cal P}}}
\def\cS{{\mathbin{\cal S}}}
\def\cSz{{\mathbin{\cal S}\kern -0.1em}^{\kern .1em 0}}
\def\PV{{\mathbin{\cal{PV}}}}
\def\TS{{\mathbin{\cal{TS}}}}
\def\cQ{{\mathbin{\cal Q}}}
\def\cW{{\mathbin{\cal W}}}
\def\C{{\mathbin{\mathbb C}}}
\def\CP{{\mathbin{\mathbb{CP}}}}
\def\bA{{\mathbin{\mathbb A}}}
\def\K{{\mathbin{\mathbb K}}}
\def\Q{{\mathbin{\mathbb Q}}}
\def\R{{\mathbin{\mathbb R}}}
\def\Z{{\mathbin{\mathbb Z}}}
\def\al{\alpha}
\def\be{\beta}
\def\ga{\gamma}
\def\de{\delta}
\def\io{\iota}
\def\ep{\epsilon}
\def\la{\lambda}
\def\ka{\kappa}
\def\th{\theta}
\def\ze{\zeta}
\def\up{\upsilon}
\def\vp{\varphi}
\def\si{\sigma}
\def\om{\omega}
\def\De{\Delta}
\def\La{\Lambda}
\def\Si{\Sigma}
\def\Tau{{\rm T}}
\def\Th{\Theta}
\def\Om{\Omega}
\def\Ga{\Gamma}
\def\Up{\Upsilon}
\def\pd{\partial}
\def\db{{\bar\partial}}
\def\ts{\textstyle}
\def\st{\scriptstyle}
\def\sst{\scriptscriptstyle}
\def\w{\wedge}
\def\sm{\setminus}
\def\bu{\bullet}
\def\op{\oplus}
\def\ot{\otimes}
\def\otL{{\kern .1em\mathop{\otimes}\limits^{\sst L}\kern .1em}}
\def\boxtL{{\kern .2em\mathop{\boxtimes}\limits^{\sst L}\kern .2em}}
\def\boxtT{{\kern .1em\mathop{\boxtimes}\limits^{\sst T}\kern .1em}}
\def\ov{\overline}
\def\ul{\underline}
\def\bigop{\bigoplus}
\def\bigot{\bigotimes}
\def\iy{\infty}
\def\es{\emptyset}
\def\ra{\rightarrow}
\def\Ra{\Rightarrow}
\def\Longra{\Longrightarrow}
\def\ab{\allowbreak}
\def\longra{\longrightarrow}
\def\hookra{\hookrightarrow}
\def\dashra{\dashrightarrow}
\def\t{\times}
\def\ci{\circ}
\def\ti{\tilde}
\def\d{{\rm d}}
\def\ha{{\ts\frac{1}{2}}}
\def\md#1{\vert #1 \vert}
\def\ms#1{\vert #1 \vert^2}
\def\HM{\mathop{\rm HM}\nolimits}
\def\MHM{\mathop{\rm MHM}\nolimits}
\def\rat{\mathop{\bf rat}\nolimits}
\def\Rat{\mathop{\bf Rat}\nolimits}
\def\HV{{\mathbin{\cal{HV}}}}
\title{{\bf{Categorification of Lagrangian intersections \\ on complex symplectic manifolds \\ using perverse sheaves of vanishing cycles}}}
\author{\smallskip\\  
Vittoria Bussi \\
\small{The Mathematical Institute,}\\
\small{Andrew Wiles Building
Radcliffe Observatory Quarter,}\\
\small{Woodstock Road, Oxford, OX1 3LB, U.K.} \\
\small{E-mail: \tt bussi@maths.ox.ac.uk}}
\date{\small{}}
\maketitle

\begin{abstract}

\smallskip

We study intersections of complex Lagrangian 
in complex symplectic manifolds, proving two main results.

\smallskip

First, we construct global canonical perverse sheaves on complex Lagrangian intersections
in complex symplectic manifolds for any pair of {\it oriented} Lagrangian submanifolds 
in the complex analytic topology. Our method uses classical results in complex symplectic geometry and some results from \cite{Joyc1}. 
The algebraic version of our result has already been obtained by the author et al. in \cite{BBDJS} using different methods, where we used, in particular, the recent new theory of algebraic d-critical loci introduced by Joyce in \cite{Joyc1}.
This resolves a long-standing question in the categorification of Lagrangian intersection numbers
and it may have important consequences in symplectic geometry and topological field theory.

\smallskip

Our second main result proves that (oriented) complex Lagrangian intersections
in complex symplectic manifolds for any pair of Lagrangian submanifolds 
in the complex analytic topology carry the structure of (oriented) analytic d-critical loci 
in the sense of \cite{Joyc1}.

\end{abstract}

\setcounter{tocdepth}{2}
\tableofcontents

\clearpage

\section*{Introduction} 
\markboth{Introduction}{Introduction}
\addcontentsline{toc}{section}{Introduction}
\label{s1}

Let $(S, \om)$ be a complex symplectic manifold, i.e., a complex manifold $S$
endowed with a closed non-degenerate holomorphic $2$-form $\om\in\Omega^2_S$.
Denote the complex dimension of $S$ by $2n.$
A complex submanifold $M \subset S$ is {\it Lagrangian} if 
the restriction of $\om$ to a $2$-form on $M$
vanishes and $\dim M = n.$
Let $X=L\cap M$ be the intersection as a complex analytic space. Then $X$
carries a canonical symmetric obstruction theory 
$\varphi: E^\bu \ra \bL_X$
in the sense of \cite{BeFa}, which can be represented by the complex $E^\bu \simeq[T^*S\vert_X\ra T^*L\vert_X\op T^*M\vert_X]$
with $T^*S\vert_X$ in degree $-1$ and $T^*L\vert_X\op T^*M\vert_X$
in degree zero. Hence
$\det(E^\bu)\cong K_L\vert_X\ot K_M\vert_X$. 
Inspired by \cite[\S 5.2]{KoSo1} in primis and then by \cite[\S 2.4]{BBDJS} and close to \cite[\S 5.2]{Joyc1}, we will say that 
if we are given square roots\/ $K_L^{1/2},K_M^{1/2}$ for $K_L,K_M$, 
then $X$ has {\it orientation data}. In this case we will also say that $L,M$ are {\it oriented} Lagrangians, see
Remark \ref{spin}. 

\medskip

We start from well known facts from complex symplectic geometry. 
It is well established 
that every complex symplectic manifold $S$ is locally isomorphic to the cotangent bundle $T^*N$ of a complex manifold $N.$ The fibres of the induced vector bundle structure on $S$ are Lagrangian submanifolds, so complex analytically locally defining on $S$ a foliation by Lagrangian submanifolds, i.e., a {\it polarization}. 
The data of a polarization for us will be used as a way to describe locally in the complex analytic topology the Lagrangian intersection $X$ as a critical locus $X\cong\Crit(f:U\ra \C),$ where $f$ is a holomorphic function on a complex manifold $U$. 
One moral of this approach is that every polarization defines a set of data for $X$
which we will call a {\it chart}, by analogy with critical charts defined by \cite[\S 2.1]{Joyc1},
and thus
the choice of a family of polarizations on a complex symplectic manifold provides a family of charts
which will be useful to defining some geometric structures on them and consequently get a global object on $X$ by {\it gluing}. This will become more clear later.  
In conclusion, on each chart defined by the choice of a polarization, there is naturally associated a perverse sheaf of vanishing cycles $\PV^\bu_{U,f}$ in the notation of \S\ref{s2}.

\medskip

Now, a natural problem to investigate is the following. Given analytic open 
${R}_i,{R}_j\subseteq X$ with isomorphisms 
${R}_i\cong\Crit(f_i)$, ${R}_j\cong\Crit(f_j)$ for holomorphic
$f_i:U_i\ra\C$ and $f_j:U_j\ra\C$, we have to understand whether the
perverse sheaves $\cP^\bu_{{R}_i}=\PV_{U_i,f_i}^\bu$ on 
${R}_i$ and $\cP^\bu_{{R}_j}=\PV_{U_j,f_j}^\bu$ on ${R}_j$
are isomorphic over ${R}_i\cap{R}_j$, and if so, whether
the isomorphism is canonical, for only then can we hope to glue the
$\cP^\bu_{{R}_i}$ for $i\in I$ to make $\cP^\bu_{L,M}$. Studying
these issues led to this paper.

\medskip

Our approach was inspired by a work of Behrend and Fantechi
\cite{BeFa}. They also investigated Lagrangian intersections in complex symplectic manifolds, but
their project is probably more ambitious, as they show the existence of deeply interesting structures carried 
by the intersection. Unfortunately, their construction has some crucial mistakes. Our project
started exactly with the aim to fix them and develop then an independent theory. 
In the meantime, the author worked with other collaborators on a large project \cite{BBDJS,BJM,BBJ}
involving Lagrangian intersections too, but our methods here want to be self contained and 
independent from that. In particular, the analogue of our Theorem below for algebraic symplectic manifolds and 
algebraic manifolds follows from \cite{BBJ,BBDJS,PTVV},
but the complex analytic case is not available in \cite{BBJ,PTVV}.

\medskip

In \S\ref{s3} we will state and prove the following result:

\medskip

\noindent\textbf{Theorem} {\it Let\/ $(S,\om)$ be a complex symplectic manifold and\/
$L,M$ oriented complex Lagrangian submanifolds in $S,$ and write $X=L\cap M,$
as a complex analytic subspace of\/ $S$. Then we may define
$P_{L,M}^\bu\in\Perv(X),$ uniquely up to canonical isomorphism, and
isomorphisms $\Si_{L,M}:P_{L,M}^\bu\ra \bD_X(P_{L,M}^\bu),$
$\Tau_{L,M}:P_{L,M}^\bu\ra P_{L,M}^\bu$,
respectively the Verdier duality and the monodromy
isomorphisms. These $P_{L,M}^\bu\in\Perv(X),\Si_{L,M},\Tau_{L,M}$ are characterized 
by the following property.

\smallskip

Given a choice of local Darboux coordinates $(x_1,\ldots,x_n,y_1,\ldots,y_n)$ 
in the sense of Definition \ref{basic} such that
$L$ is locally identified in coordinates with the graph $\Gamma_{\d f(x_1,\ldots,x_n)}$ of $\d f$ for $f$ a holomorphic function defined locally on an open $U\subset \C^n$, and $M$  is locally identified in coordinates with the graph  $\Gamma_{\d g(x_1,\ldots,x_n)}$ of $\d g$ for $g$ 
 a holomorphic function defined locally on $U$, and
the orientations $K_L^{1/2},K_M^{1/2}$ are the trivial square roots of
$K_L \cong \langle \d x_1\wedge \cdots\wedge\d x_n\rangle \cong K_M$, 
then $P_{L,M}^\bu\cong \PV^\bu_{U,g-f},$ where $\PV^\bu_{U,g-f}$
is the perverse sheaf of vanishing cycles of $g-f,$
and $\Sigma_{L,M}$ and $\Tau_{L,M}$ are respectively the Verdier duality $\sigma_{U,g-f}$ and the monodromy $\tau_{U,g-f}$
introduced in \S\ref{s2}.

\smallskip

The same applies for $\cD$-modules and mixed Hodge modules on\/~$X$.}

\medskip

Here is a sketch of the method of proof, given in detail in \S\ref{s3.1}--\ref{s3.3}.

\medskip

Given $(S,\om)$ a complex symplectic manifold we want to construct a global perverse sheaf $P_{L,M}^\bu\in\Perv(X),$ by gluing
together local data coming from choices of polarizations by isomorphisms. We consider an open cover $\{S_i\}_{i\in I}$ of $S$
and polarizations $\pi_i:S_i\ra E_i$, always assumed to be transverse to both the Lagrangians $L$ and $M.$ We use the following method:
\begin{itemize}
\setlength{\itemsep}{0pt}
\setlength{\parsep}{0pt}
\item[(i)] For each polarization $\pi_i: S_i\ra E_i$ transverse to both the
Lagrangian submanifolds $L$ and $M$,
we define a perverse sheaf of vanishing cycle $\PV^\bu_{f_i},$ naturally defined
on the chart induced by the choice of a polarization. 
and a principal $\Z_2$-bundle $Q_{f_i}$,
which roughly speaking parametrizes isomorphisms $K_L^{1/2} \cong K_M^{1/2}$ compatible with $\pi_i$.

\item[(ii)] For two such polarizations $E_i$ and $E_j$, transverse to each other, and to both the Lagrangians,
we have a way to define
two perverse sheaves of vanishing cycles, $\PV^\bu_{f_i}$ and
$\PV^\bu_{f_j},$ again with principal $\Z_2$-bundles,
each of them parametrizing choices of square roots of the canonical bundles of $L\cong\Gamma_{\d f_i}$
and $M\cong\Gamma_{\d f_j}.$ In this case we find an isomorphism $\Psi_{ij}$ on double overlap $S_i\cap S_j$
between $\PV^\bu_{f_i}\ot_{\Z_2} Q_{f_i}$ and $\PV^\bu_{f_j}\ot_{\Z_2} Q_{f_j}$.

\item[(iii)] For four such polarizations $E_i,$ $E_j,$ $E_k$ and $E_l$ with $E_i$ not necessarily transverse to $E_k,$ we obtain equality between $\Psi_{ij}\ci \Psi_{jk}$ and $\Psi_{il}\ci \Psi_{lk}$ on $S_i\cap S_j \cap S_k \cap S_l$.

As perverse sheaves form a stack in the sense of Theorem \ref{sm2thm3}, 
there exists $P_{L,M}^\bu$ on $X$, unique up to canonical
isomorphism, with $P^\bu_{L,M}\vert_{S_i}\cong\PV^\bu_{f_i}\ot_{\Z_2} Q_{f_i},$ for
all~$i\in I$.
\end{itemize}

Our perverse sheaf $P^\bu_{L,M}$ categorifies Lagrangian intersection numbers, in the sense that
the constructible function 
$$p\ra \sum_i (-1)^{i} \dim_\C \mathbb{H}^i_{\{ p \}}(X, P^\bu_{L,M}),$$
is equal to the well known Behrend function $\nu_X$ in \cite{Behr} by construction, using the expression of the Behrend function of a critical locus in terms of the Milnor fibre, as in \cite{Behr}, and so $$\chi(X,\nu_X)=\sum_i (-1)^{i}\dim_\C \mathbb{H}^i (X,P^\bu_{L,M}).$$

This resolves a long-standing question in the categorification of Lagrangian intersection number,
and it may have exciting far reaching consequences in symplectic geometry and topological field theory.

\medskip

In \cite{KR2}, Kapustin and Rozansky study boundary conditions and defects in a three-dimensional topological sigma-model with a complex symplectic target space, the Rozansky-Witten model.
They conjecture the existence of an interesting 2-category, the 2-category of boundary conditions. 
Their toy model for symplectic manifold is a cotangent bundle of some manifold. In this case, this category is related to the category of matrix factorizations \cite{Orlov}. 
Thus, we strongly believe that 
constructing a sheaf of $\Z_2$-periodic triangulated categories on Lagrangian intersection would yield an answer to their conjecture.
In the language of categorification, this would give a second categorification of the intersection numbers, the first being given by the hypercohomology of the perverse sheaf constructed in the present work. 
Also, this construction should be compatible with the Gerstenhaber and Batalin--Vilkovisky structures 
in the sense of \cite[Conj. 1.3.1]{BaGi}.

\medskip

Our second main result proved in \S\ref{s4} constitutes another bridge between our work and 
\cite{Joyc1,BBDJS,BBJ}.
Pantev et al.\ \cite{PTVV} show that {\it derived} intersections
$L\cap M$ of algebraic Lagrangians $L,M$ in an algebraic symplectic
manifold $(S,\om)$ have $-1$-shifted symplectic structures, so that
Theorem 6.6 in \cite{BBDJS} gives them the structure of algebraic
d-critical loci in the sense of \cite{Joyc1}. 
Our second main result shows a complex analytic version of this, 
which is not available from \cite{BBJ,PTVV},
that is,
 the {\it classical} intersection $L\cap M$ of complex 
Lagrangians $L,M$ in a complex symplectic manifold $(S,\om)$ has the structure of an (oriented) complex
analytic d-critical locus.

\medskip

\noindent\textbf{Theorem} {\it Suppose $(S,\om)$ is a complex
symplectic manifold, and\/ $L,M$ are (oriented) complex Lagrangian submanifolds
in $S$. Then the intersection $X=L\cap M,$ as a complex analytic
subspace of\/ $S,$ extends naturally to a (oriented) complex analytic
d-critical locus\/ $(X,s)$. The canonical bundle $K_{X,s}$ 
in the sense of \cite[\S 2.4]{Joyc1} is naturally isomorphic
to\/~$K_L\vert_{X^\red}\ot K_M\vert_{X^\red}$.}

\medskip

It would be interesting to prove an analogous 
version of this also for 
a class of `derived
Lagrangians' in~$(S,\om)$.
Some of the authors of \cite{BBDJS}
are working on defining a `Fukaya category' of (derived)
complex Lagrangians in a complex symplectic manifold, using
$\bH^*(P_{L,M}^\bu)$ as morphisms.

\medskip

\begin{center} \textbf{Outline of the paper} \end{center}  

\smallskip

The paper begins with a section of background material on perverse sheaves in the complex analytic topology.
Then we review basic notions in symplectic geometry. In \S\ref{s3}, we state and prove our first main result on the construction of a canonical global perverse sheaf on complex Lagrangian intersections. In \S\ref{s4} we prove our second main result on the d-critical locus structure carried by Lagrangian intersections. Finally, the last section
sketches some implications of the theory and proposes new ideas for further research.

\medskip

\begin{center} \textbf{Notations and conventions} \end{center}  

\smallskip

Throughout we will work in the complex analytic topology over 
$\C$. We will denote by $(S,\om)$ a complex symplectic manifold endowed with a symplectic form $\om,$
and its Lagrangian submanifolds will be always assumed to be nonsingular. 
Note that all complex analytic
spaces in this paper are locally of finite
type, which is necessary for the existence of embeddings
$i:X\hookra U$ for $U$ a complex manifold.
Fix a well-behaved commutative base ring $A$
(where `well-behaved' means that we need assumptions on $A$ such as
$A$ is regular noetherian, of finite global dimension or finite
Krull dimension, a principal ideal domain, or a Dedekind domain, at
various points in the theory), to study sheaves of $A$-modules. For
some results $A$ must be a field. Usually we take $A=\Z,\Q$ or~$\C$.

\medskip

\begin{center} \textbf{Acknowledgements} \end{center}  

\smallskip

I would like to thank my supervisor Dominic Joyce for his continuous support, for many enlightening suggestions and valuable discussions. I would like to thank also Kai Behrend and Barbara Fantechi for interesting discussions,
and Delphine Dupont, with whom I started to work on this project during her stay in Oxford.
This research is part of my D.Phil. project funded by an EPSRC Studentship.

\section{Background material}
\label{s2}

In this introductory section we first recall general definitions and conventions
about perverse sheaves on complex analytic spaces, and some results very well known in literature,
which will be used in the sequel. This first part has a substantial overlap with \cite{BBDJS}.
Secondly, we recall some definitions and results from \cite{Joyc1}, crucially used to prove one of the main result of \cite{BBDJS} about behavior of perverse sheaves of vanishing cycles under stabilization, stated in \S\ref{s3}.
Finally, we establish the basic notation for symplectic manifolds, their Lagrangian submanifolds and
polarizations, and we recall results from complex symplectic geometry, used in \S\ref{s3}.

\subsection{Perverse sheaves on complex analytic spaces}
\label{s2.1}

We discuss perverse sheaves on complex analytic spaces, as in
Dimca \cite{Dimc}.

\smallskip

For the whole section, let $A$ be a well-behaved commutative base ring (usually we take $A=\Z,\Q$ or~$\C$).
Usually $X$ will be a complex analytic space, always assumed locally of
finite type. We start by discussing constructible complexes, following
Dimca~\cite[\S 2--\S 4]{Dimc}.

\begin{dfn} 
A sheaf
$\cS$ of $A$-modules on $X$ is called {\it constructible\/} if all
the stalks $\cS_x$ for $x\in X$ are finite type $A$-modules, and
there is a locally finite stratification $X=\coprod_{j\in
J}X_j$ of $X$, where $X_j\subseteq X$ for $j\in J$ are
complex analytic subspaces of $X$, such that $\cS\vert_{X_j}$
is an $A$-local system for all~$j\in J$.

\smallskip

Write $D(X)$ for the derived category of complexes $\cC^\bu$ of
sheaves of $A$-modules on $X$. Write $D^b_c(X)$ for the full
subcategory of bounded complexes $\cC^\bu$ in $D(X)$ whose
cohomology sheaves $\cH^m(\cC^\bu)$ are constructible for all
$m\in\Z$. Then $D(X),D^b_c(X)$ are triangulated categories. An
example of a constructible complex on $X$ is the {\it constant
sheaf\/} $A_X$ on $X$ with fibre $A$ at each point.

\smallskip

Grothendieck's ``six operations on sheaves'' $f^*,f^!,Rf_*,Rf_!,
{\cal RH}om,\smash{\otL}$ act on $D(X)$ preserving the subcategory
$D^b_c(X)$ in case $f$ is proper. 

\smallskip

For $\cB^\bu,\cC^\bu$ in $D^b_c(X)$, we may form their {\it derived
Hom\/} ${\cal RH}om(\cB^\bu,\cC^\bu)$ \cite[\S 2.1]{Dimc}, and {\it
left derived tensor product\/} $\cB^\bu\otL\cC^\bu$ in $D^b_c(X)$,
\cite[\S 2.2]{Dimc}. Given $\cB^\bu\in D^b_c(X)$ and $\cC^\bu\in
D^b_c(Y)$, we define $\cB^\bu\smash{\boxtL}\cC^\bu=
\pi_X^*(\cB^\bu)\otL\pi_Y^*(\cC^\bu)$ in $D^b_c(X\t Y)$, where
$\pi_X:X\t Y\ra X$, $\pi_Y:X\t Y\ra Y$ are the projections.

\smallskip

If $X$ is a complex analytic space, there is a functor $\bD_X:D^b_c(X)\ra
D^b_c(X)^{\rm op}$ with $\bD_X\ci\bD_X\cong\id:D^b_c(X)\ra
D^b_c(X)$, called {\it Verdier duality}. It reverses shifts, that
is, $\bD_X\bigl(\cC^\bu[k]\bigr)=\bigl(\bD_X(\cC^\bu)\bigr)[-k]$ for
$\cC^\bu$ in $D^b_c(X)$ and~$k\in\Z$.

\smallskip

We will use the following property: if\/ $f:X\ra Y$ is a morphism then 
\e
Rf_!\cong \bD_Y\ci
Rf_*\ci\bD_X \quad \textrm{and}\quad f^!\cong \bD_X\ci f^*\ci\bD_Y.
\label{sm2thm1}
\e

If $X$ is a complex analytic space, and $\cC^\bu\in D^b_c(X)$, the {\it
hypercohomology\/} $\bH^*(\cC^\bu)$ and {\it compactly-supported
hypercohomology\/} $\bH^*_{\rm cs}(\cC^\bu)$, both graded
$A$-modules, are
\e
\bH^k(\cC^\bu)=H^k(R\pi_*(\cC^\bu))\quad\text{and}\quad \bH^k_{\rm
cs}(\cC^\bu)=H^k(R\pi_!(\cC^\bu))\quad\text{for $k\in\Z$,}
\label{sm2eq1}
\e
where $\pi:X\ra *$ is projection to a point. If $X$ is proper then
$\bH^*(\cC^\bu)\cong\bH^*_{\rm cs}(\cC^\bu)$. They are related to usual cohomology by
$\bH^k(A_X)\cong H^k(X;A)$ and $\bH^k_{\rm cs}(A_X)\cong H^k_{\rm
cs}(X;A)$. If $A$ is a field then under Verdier duality 
$\bH^k(\cC^\bu)\cong \bH^{-k}_{\rm cs}(\bD_X(\cC^\bu))^*$.
\label{sm2def1}
\end{dfn}

Next we review perverse sheaves, following Dimca~\cite[\S 5]{Dimc}.

\begin{dfn} Let $X$ be a complex analytic space, and for each $x\in X$,
let $i_x:*\ra X$ map $i_x:*\mapsto x$. If $\cC^\bu\in D^b_c(X)$,
then the {\it support\/} $\supp^m\cC^\bu$ and {\it cosupport\/}
$\cosupp^m\cC^\bu$ of $\cH^m(\cC^\bu)$ for $m\in\Z$ are
\begin{align*}
\supp^m\cC^\bu=\overline{\bigl\{x\in X:\cH^m(i_x^*(\cC^\bu))
\ne 0\bigr\}} \quad\textrm{and}\quad
\cosupp^m\cC^\bu=\overline{\bigl\{x\in X:\cH^m(i_x^!(\cC^\bu))
\ne 0\bigr\}},
\end{align*}
where $\overline{\{\cdots\}}$ means the closure in $X^\an$. If $A$
is a field then $\cosupp^m\cC^\bu=\supp^{-m}\bD_X(\cC^\bu)$. We call
$\cC^\bu$ a {\it perverse sheaf} if
$$\dim_{\sst\C}\supp^{-m}\cC^\bu\!\le\!m \quad\text{and}\quad \dim_{\sst\C}\cosupp^m\cC^\bu\!\le\! m$$  
for all $m\in\Z$, where by
convention $\dim_{\sst\C}\es=-\iy$. Write $\Perv(X)$ for the full
subcategory of perverse sheaves in $D^b_c(X)$. Then $\Perv(X)$ is an
abelian category, the heart of a t-structure on~$D^b_c(X)$.
\label{sm2def2}
\end{dfn}

Perverse sheaves have the following properties:

\begin{thm}{\bf(a)} If\/ $A$ is a field then $\Perv(X)$ is
noetherian and artinian.
\smallskip

\noindent{\bf(b)} If\/ $A$ is a field then $\bD_X:D^b_c(X)\ra
D^b_c(X)$ maps $\Perv(X)\ra\Perv(X)$.
\smallskip

\noindent{\bf(c)} If\/ $i:X\hookra Y$ is inclusion of a closed\/
complex analytic subspaces, then $Ri_*,Ri_!$ (which are naturally isomorphic)
map $\Perv(X)\ra\Perv(Y)$.

\smallskip

Write $\Perv(Y)_X$ for the full subcategory of objects in $\Perv(Y)$
supported on $X$. Then $Ri_*\cong Ri_!$ are equivalences of
categories $\Perv(X)\,{\buildrel\sim\over\longra}\, \Perv(Y)_X$. The
restrictions $i^*\vert_{\Perv(Y)_X},i^!\vert_{\Perv(Y)_X}$ map\/
$\Perv(Y)_X\ra \Perv(X),$ are naturally isomorphic, and are
quasi-inverses for\/~$Ri_*,Ri_!:\Perv(X)\ra\Perv(Y)_X$.
\smallskip

\noindent{\bf(d)} If\/ $f:X\ra Y$ is \'etale then $f^*$ and\/ $f^!$
(which are naturally isomorphic) map $\Perv(Y)\ra\Perv(X)$. More
generally, if\/ $f:X\ra Y$ is smooth of relative dimension\/ $d,$
then\/ $f^*[d]\cong f^![-d]$ map $\Perv(Y)\ra\Perv(X)$.
\smallskip

\noindent{\bf(e)} $\smash{\boxtL}:D^b_c(X)\!\t\! D^b_c(Y)\!\ra\!
D^b_c(X\!\t\! Y)$ maps\/~$\Perv(X)\!\t\!\Perv(Y)\!\ra\!\Perv(X\!\t\!
Y)$.

\smallskip

\noindent{\bf(f)} Let\/ $U$ be a complex manifold. Then
$A_U[\dim U]$ is perverse, where $A_U$ is the constant sheaf on $U$
with fibre $A,$ and\/ $[\dim U]$ means shift by $\dim U$ in the
triangulated category $D^b_c(X)$. Moreover, there is a canonical
isomorphism\/~$\bD_U\bigl(A_U[\dim U]\bigr)\cong A_U[\dim U]$.
\label{sm2thm2}
\end{thm}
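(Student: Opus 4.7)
The theorem bundles the standard t-structure properties of perverse sheaves from Beilinson--Bernstein--Deligne and Dimca, so my plan is to unravel the defining support and cosupport bounds and invoke the six-functor machinery, keeping close track of how complex dimensions of supports shift under each operation. The organizing principle is the Verdier identity $\cosupp^m\cC^\bu=\supp^{-m}\bD_X(\cC^\bu)$, valid when $A$ is a field, together with dimension bookkeeping under pullback and external product.

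First I would dispose of (b), (e), (f), each of which is essentially a definition-chase. Part (b) is immediate from the support/cosupport swap under $\bD_X$: the two defining inequalities are exchanged, hence stable. For (f), the only stratum of a complex manifold $U$ is $U$ itself, and $\cH^{-\dim U}(A_U[\dim U])=A_U$ gives borderline equality in both perverse inequalities; the canonical Verdier self-duality follows from the standard identification $\bD_U\cong A_U[2\dim U]$ for a smooth complex manifold (real dimension $2\dim U$). For (e), a K\"unneth-type estimate $\supp^m(\cB^\bu\boxtL\cC^\bu)\subseteq\bigcup_{p+q=m}\supp^p\cB^\bu\t\supp^q\cC^\bu$, combined with additivity of complex dimensions under products, yields the perverse inequalities on $X\t Y$ directly.

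For (c), I would use that $Ri_*\cong Ri_!$ for a closed embedding is fully faithful on $D^b_c$ and that closed immersions preserve the complex dimensions of supports, so the perverse conditions transfer unchanged. The inverse equivalence on $\Perv(Y)_X$ is $i^*\cong i^!$, with the natural isomorphism on sheaves supported on $X$ coming from the attaching triangle $i_!i^!\cC^\bu\ra\cC^\bu\ra Rj_*j^*\cC^\bu$, which degenerates since $j^*\cC^\bu=0$ for $\cC^\bu$ supported on $X$. For (d), a smooth morphism $f$ of relative complex dimension $d$ satisfies $f^!\cong f^*[2d]$ (the relative dualizing complex of a smooth morphism is a shifted constant sheaf), so $f^*[d]\cong f^![-d]$; pullback under $f$ increases complex dimensions of strata by exactly $d$, and the shift by $\pm d$ in either direction precisely corrects the perverse inequalities.

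The main obstacle is (a). Noetherianness follows from artinianness via Verdier duality, so the real content is that descending chains in $\Perv(X)$ stabilize. I would proceed by induction on the dimension of the support: using the open--closed stratification, reduce any perverse sheaf to an iterated extension of intermediate extensions $j_{!*}(\cL[\dim Z])$ of simple $A$-local systems $\cL$ on smooth locally closed strata $Z\subseteq X$. Over a field these local systems correspond to finite-dimensional representations of $\pi_1(Z)$ and hence satisfy both chain conditions, and the inductive gluing then controls Jordan--H\"older filtrations in $\Perv(X)$. The delicate point is the full BBD gluing formalism realizing $j_{!*}$ as the image of the canonical map $j_!\to Rj_*$ and showing that every simple perverse sheaf is of this form; this is where I expect the heaviest technical input, and in practice I would invoke it directly from Beilinson--Bernstein--Deligne rather than re-derive it from scratch.
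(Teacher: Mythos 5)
The paper does not prove Theorem~\ref{sm2thm2}: it is recorded as standard background, with the reader referred to Dimca \cite[\S 5]{Dimc} and, implicitly, to Beilinson--Bernstein--Deligne. Your sketch recalls the standard arguments and is essentially correct: the support/cosupport swap under $\bD_X$ gives (b); the dualizing complex $\omega_U\cong A_U[2\dim_{\C}U]$ on a complex manifold gives (f) (note that you write ``$\bD_U\cong A_U[2\dim U]$'', conflating the duality functor with the dualizing object, but the intent is clear); the K\"unneth support estimate handles (e); the attaching triangle handles (c); and the identity $f^!\cong f^*[2d]$ for $f$ smooth of relative complex dimension $d$, combined with the dimension shift $\dim f^{-1}(Z)=\dim Z+d$, gives (d).

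The one place where the logic is slightly off is (a). You say you would ``reduce any perverse sheaf to an iterated extension of intermediate extensions $j_{!*}(\cL[\dim Z])$'' and then deduce the chain conditions. But the statement that every perverse sheaf has a finite filtration with simple subquotients of the form $j_{!*}(\cL[\dim Z])$ \emph{is} the assertion of finite length, which is equivalent to noetherian plus artinian --- so as written the reduction is circular. The actual BBD argument proves the chain conditions directly by d\'evissage: using the recollement $(j^*,i^*,i^!)$ for an open stratum $j:U\hookra X$ with closed complement $i:Z\hookra X$, one controls any ascending or descending chain in $\Perv(X)$ by its images on $U$ and $Z$ and inducts on the number of strata (equivalently, on $\dim\supp$), with the base case being local systems on a smooth stratum, where finite-dimensionality of representations of $\pi_1$ over the field $A$ gives the chain conditions. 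The classification of simples as intermediate extensions is then a corollary, not an input. Since you explicitly say you would invoke BBD for the heavy lifting, this is a presentational slip rather than a genuine gap, but it is worth getting the order of implication right.
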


The next theorem is proved in 
\cite[Th.~10.2.9]{KaSc1}, see also~\cite[Prop.~8.1.26]{HTT}. The analogue for $D^b_c(X)$ or $D(X)$
rather than $\Perv(X)$ is false. One moral is that perverse sheaves
behave like sheaves, rather than like complexes.

\begin{thm} Perverse sheaves on a complex analytic space $X$
form a \begin{bfseries}stack\end{bfseries} on $X$ in the complex analytic topology.
Explicitly, this means the following. Let $\{U_i\}_{i\in
I}$ be an analytic open cover for $X,$ and write
$U_{ij}=U_i \cap U_j$ for $i,j\in I$.
Similarly, write $U_{ijk}=U_i \cap U_j \cap U_k$ for $i,j,k\in
I$. With this notation:
\smallskip

\noindent{\bf(i)} Suppose $\cP^\bu,\cQ^\bu\in\Perv(X),$ and we are
given $\al_i : \cP^\bu\vert_{U_i} \ra \cQ^\bu\vert_{U_i}$ in $\Perv(U_i)$ for
all\/ $i\in I$ such that for all\/ $i,j\in I$ we have
$$\al_i \vert_{U_{ij}} = \al_j \vert_{U_{ij}},$$ then exists unique $\al:\cP^\bu\ra\cQ^\bu$ in $\Perv(X)$ with $\al\vert_{U_i}=\al_i$ for all\/~$i\in I$.

\smallskip

\noindent{\bf(ii)} Suppose we are given objects\/ $\cP^\bu_i\in
\Perv(U_i)$ for all\/ $i\in I$ and isomorphisms
$\al_{ij}:\cP^\bu_i \vert_{U_{ij}}\ra \cP^\bu_j\vert_{U_{ij}}$ in
$\Perv(U_{ij})$ for all\/ $i,j\in I$ with\/ $\al_{ii}=\id$ and\/
\begin{equation*}
\al_{jk}\vert_{U_{ijk}}\ci \al_{ij}\vert_{U_{ijk}}=
\al_{ik}\vert_{U_{ijk}}:\cP_i\vert_{U_{ijk}}\longra
\cP_k\vert_{U_{ijk}}
\end{equation*}
in $\Perv(U_{ijk})$ for all\/ $i,j,k\in I$. Then there exists\/
$\cP^\bu$ in $\Perv(X),$ unique up to canonical isomorphism, with
isomorphisms $\be_i: \cP^\bu \vert_{U_{i}} \ra\cP^\bu_i$ for each\/ $i\in I,$
satisfying $\al_{ij}\ci\be_i\vert_{U_{ij}}=\be_j\vert_{U_{ij}}:
\cP^\bu\vert_{U_{ij}}\ra\cP^\bu_j\vert_{U_{ij}}$ for all\/~$i,j\in I$.

\label{sm2thm3}
\end{thm}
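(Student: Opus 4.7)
The plan is to prove part (i) by showing that morphisms between perverse sheaves form a sheaf, and then to deduce part (ii) by a descent-type construction that succeeds precisely because $\Perv(X)$ is the abelian heart of a t-structure on $D^b_c(X)$.

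For (i), the key auxiliary lemma is that for $\cP^\bu,\cQ^\bu\in\Perv(X)$ the presheaf $V\mapsto \Hom_{\Perv(V)}(\cP^\bu\vert_V,\cQ^\bu\vert_V)$ is actually a sheaf on $X$. Since $\Perv(V)\subseteq D^b_c(V)$ is a full subcategory, this Hom coincides with $\bH^0(V,R\SHom(\cP^\bu,\cQ^\bu))$, and the sheaf property is a direct consequence of the Mayer--Vietoris sequence for hypercohomology applied to the complex $R\SHom(\cP^\bu,\cQ^\bu)$. From this, (i) is immediate: the compatible family $\{\al_i\}$ is a matching family of local sections, hence determines a unique global section $\al\in\Hom_{\Perv(X)}(\cP^\bu,\cQ^\bu)$ with $\al\vert_{U_i}=\al_i$.

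For (ii), I would reduce to finite, and then two-element, covers. In the two-element case $X=U_1\cup U_2$, set $Z=X\sm U_1$, a closed subspace contained in $U_2$, with inclusion $i_Z:Z\hookra X$. The open--closed decomposition $(U_1,Z)$ gives a recollement on $D^b_c(X)$, in which the perverse t-structure is glued from the perverse t-structures on $U_1$ and $Z$. A perverse sheaf on $X$ is then equivalent to the data of an object of $\Perv(U_1)$, an object of $\Perv(Z)$, and a gluing morphism between their appropriate images. I would take $\cP^\bu_1$ on $U_1$, the restriction $i_Z^*\cP^\bu_2$ on $Z$, and the identification induced by $\al_{12}\vert_{U_{12}}$; the resulting object $\cP^\bu$ lies in $\Perv(X)$ since perversity is a local condition on supports and cosupports (Definition \ref{sm2def2}), and both restrictions are perverse by hypothesis. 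For a general locally finite cover one iterates this construction, well-ordering $I$ and extending across one $U_i$ at a time, using the cocycle identity $\al_{jk}\vert_{U_{ijk}}\ci \al_{ij}\vert_{U_{ijk}}=\al_{ik}\vert_{U_{ijk}}$ to ensure that the extensions at each stage are mutually compatible.

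The main obstacle is the two-element case, where one must verify that the recollement description of perverse sheaves honestly produces a global object and is not merely a formal datum. This is precisely the point at which the abelian nature of $\Perv$ is essential: the analogous statement fails for $D^b_c(X)$ because naive $1$-categorical descent is too weak (one would need a full $\infty$-categorical enhancement to encode higher coherences), whereas $1$-categorical descent along the cocycle condition suffices in an abelian category. The uniqueness of $\cP^\bu$ up to canonical isomorphism, together with the existence of the isomorphisms $\be_i$, then follows by applying part (i) to two candidate gluings.
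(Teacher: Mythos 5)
The paper does not actually prove this theorem: it cites it directly from \cite[Th.~10.2.9]{KaSc1} and \cite[Prop.~8.1.26]{HTT}, so there is no ``paper's own proof'' to compare against. On the merits, your plan for (i) is on the right track but leaves the crucial step unstated, and your plan for (ii) contains a concrete error.

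For (i), the identification $\Hom_{\Perv(V)}(\cP^\bu\vert_V,\cQ^\bu\vert_V)=\bH^0(V,R\SHom(\cP^\bu,\cQ^\bu))$ is fine, but the Mayer--Vietoris sequence alone does \emph{not} give the sheaf property for $\bH^0$ of an arbitrary complex: injectivity of $\bH^0(U_1\cup U_2)\to\bH^0(U_1)\op\bH^0(U_2)$ requires the vanishing of $\bH^{-1}(U_{12},R\SHom(\cP^\bu,\cQ^\bu))=\Hom_{D^b_c(U_{12})}(\cP^\bu\vert,\cQ^\bu\vert[-1])$, and more generally $\bH^{<0}$ must vanish on all opens. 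This vanishing is exactly the statement $\Ext^{<0}=0$ between objects of the heart of a t-structure, and it is the precise point where the abelian nature of $\Perv$ enters. You should name this vanishing explicitly; without it the argument is incomplete, and the same claim would falsely appear to apply to $D^b_c(X)$, where the theorem fails.

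For (ii), the recollement description of a perverse sheaf relative to an open--closed decomposition $(U_1,Z)$ is \emph{not} the triple ``an object of $\Perv(U_1)$, an object of $\Perv(Z)$, and an isomorphism.'' If $j:U_1\hookra X$ is open and $i:Z\hookra X$ is closed, then for $\cP^\bu\in\Perv(X)$ one has $j^*\cP^\bu\in\Perv(U_1)$, but $i^*\cP^\bu$ lies only in ${}^pD^{\le 0}(Z)$ and $i^!\cP^\bu$ only in ${}^pD^{\ge 0}(Z)$; in particular $i_Z^*\cP^\bu_2$ is not a perverse sheaf on $Z$, so the object you propose to feed into the recollement is the wrong one. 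The actual gluing datum (MacPherson--Vilonen) consists of a perverse sheaf on $U_1$, a perverse sheaf on $Z$, and a \emph{pair} of morphisms factoring the canonical map ${}^pH^{-1}(i^*Rj_!\,\cdot\,)\to{}^pH^0(i^!Rj_*\,\cdot\,)$ through the object on $Z$, and the isomorphism $\al_{12}$ over the \emph{open} overlap $U_{12}$ does not translate into this datum in the way you suggest. A correct elementary route for (ii) is instead Čech-type: for a two-element cover, form the object $\cP^\bu$ fitting in a triangle $\cP^\bu\to R(j_1)_*\cP^\bu_1\op R(j_2)_*\cP^\bu_2\to R(j_{12})_*(\cP^\bu_1\vert_{U_{12}})\to\cP^\bu[1]$ (with the difference map twisted by $\al_{12}$) and verify perversity of $\cP^\bu$ using the support/cosupport conditions together with the same $\Ext^{<0}=0$ vanishing; then handle larger covers by the sheaf property of morphisms from (i). Your well-ordering argument also needs care at limit ordinals for an infinite cover, though that is a secondary concern next to the incorrect recollement step.
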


If $P\ra X$ is a principal $\Z_2$-bundle on a complex manifold $X$, and
$\cQ^\bu\in\Perv(X)$, we will define a perverse sheaf
$\cQ^\bu\ot_{\Z_2}P$.

\begin{dfn} A {\it principal\/
$\Z_2$-bundle\/} $P\ra X$ on a complex analytic space $X$, is a proper, \'etale, surjective, complex analytic
morphism of complex analytic spaces $\pi:P\ra X$ together with a free
involution $\si:P\ra P$, such that the orbits of $\Z_2=\{1,\si\}$
are the fibres of $\pi$. 

\smallskip

Let $P\ra X$ be a principal $\Z_2$-bundle. Write $\cL_P\in
D^b_c(X)$ for the rank one $A$-local system on $X$ induced from $P$
by the nontrivial representation of $\Z_2\cong\{\pm 1\}$ on $A$.
It is characterized by $\pi_*(A_P)\cong A_X\op\cL_P$. For each
$\cQ^\bu\in D^b_c(X)$, write $\cQ^\bu\ot_{\Z_2}P\in D^b_c(X)$ for
$\cQ^\bu\otL\cL_P$, and call it $\cQ^\bu$ {\it twisted by\/} $P$. If
$\cQ^\bu$ is perverse then $\cQ^\bu\ot_{\Z_2}P$ is perverse. 
Perverse sheaves and complexes twisted by principal $\Z_2$-bundles
have the obvious functorial behavior.
 \label{sm2def3}
\end{dfn}

\smallskip

We explain nearby cycles and vanishing cycles, as in Dimca \cite[\S
4.2]{Dimc}. 

\begin{dfn} Let $X$ be a complex analytic space, and $f:X\ra\C$ a holomorphic
function. Define $X_0=f^{-1}(0)$, as a complex analytic subspace of
$X$, and $X_*=X\sm X_0$. Consider the commutative diagram of complex
analytic spaces:
\e
\begin{gathered}
\xymatrix@R=15pt@C=30pt{ X_0 \ar[r]_i \ar[d]^(0.45)f & X
\ar[d]^(0.45)f & X_* \ar[l]^j \ar[d]^(0.45)f &
{\widetilde{X_*}\phantom{.}} \ar[l]^p \ar@/_.7pc/[ll]_\pi
\ar[d]^(0.45){\ti f} \\ \{0\} \ar[r] & \C & \C^* \ar[l] &
{\widetilde{\C^*}.} \ar[l]_\rho}
\end{gathered}
\label{sm2eq2}
\e
Here $i:X_0\hookra
X$, $j:X_*\hookra X$ are the inclusions,
$\rho:\widetilde{\C^*}\ra\C^*$ is the universal cover of
$\C^*=\C\sm\{0\}$, and $\widetilde{X_*}=X_*\t_{f,\C^*,\rho}
\widetilde{\C^*}$ the corresponding cover of $X_*$, with
covering map $p:\widetilde{X_*}\ra X_*$, and $\pi=j\ci p$.
We define the {\it nearby cycle
functor\/} $\psi_f:D^b_c(X)\ra D^b_c(X_0)$ to be $\psi_f=i^*\ci
R\pi_*\ci \pi^*$.

\smallskip

There is a natural transformation $\Xi:i^*\Ra\psi_f $ between the
functors $i^*,\psi_f:D^b_c(X)\ra D^b_c(X_0)$. The {\it vanishing
cycle functor\/} $\phi_f:D^b_c(X)\ra D^b_c(X_0)$ is a functor such
that for every $\cC^\bu$ in $D^b_c(X)$ we have a distinguished
triangle
\begin{equation*}
\smash{\xymatrix@C=40pt{i^*(\cC^\bu) \ar[r]^{\Xi(\cC^\bu)} & \psi_f
(\cC^\bu) \ar[r] & \phi_f (\cC^\bu) \ar[r]^{[+1]} & i^*(\cC^\bu)}}
\end{equation*}
in $D^b_c(X_0)$. Following Dimca \cite[p.~108]{Dimc}, we write
$\psi_f^p,\phi_f^p$ for the shifted functors~$\psi_f[-1],\ab
\phi_f[-1]:D^b_c(X)\ra D^b_c(X_0)$.

\smallskip

The generator of $\Z=\pi_1(\C^*)$ on $\widetilde{\C^*}$ induces a
deck transformation $\de_{\smash{\C^*}}:\widetilde{\C^*}\ra
\widetilde{\C^*}$ which lifts to a deck transformation
$\de_{\smash{X^*}}:\widetilde{X^*}\ra\widetilde{X^*}$ with
$p\ci\de_{\smash{X^*}}=p$ and $\ti f\ci\de_{\smash{X^*}}=
\de_{\smash{\C^*}}\ci\ti f$. As in \cite[p.~103, p.~105]{Dimc}, we
can use $\de_{\smash{X^*}}$ to define natural transformations
$M_{X,f}:\psi^p_f\Ra\psi^p_f$ and $M_{X,f}:\phi^p_f\Ra\phi^p_f$,
called {\it monodromy}.

\smallskip

By Dimca
\cite[Th.~5.2.21]{Dimc}, if\/ $X$ is a complex analytic space and\/ $f:X\ra\C$ is
holomorphic, then\/ $\psi_f^p,\phi_f^p:D^b_c(X)\ra D^b_c(X_0)$ both
map\/~$\Perv(X)\ra\Perv(X_0)$.
\label{sm2def4}
\end{dfn}

We will use the following property, proved by
Massey \cite{Mass3}.
If\/ $X$ is a complex manifold and\/ $f:X\ra\C$ is
regular, then there are natural isomorphisms 
\e
\psi_f^p\ci\bD_X\cong
\bD_{X_0}\ci\psi_f^p \quad \textrm{and}\quad \phi_f^p\ci\bD_X\cong
\bD_{X_0}\ci\phi_f^p.
\label{sm2thm4}
\e

We can now define perverse sheaf
of vanishing cycles $\PV^\bu_{U,f}$ for a holomorphic
function~$f:U\ra\C$.

\begin{dfn} Let $U$ be a complex analytic space, and $f:U\ra\C$ a holomorphic
function. Write $X=\Crit(f)$, as a closed complex analytic subspace of
$U$. Then as a map of topological spaces, $f\vert_X:X\ra\C$ is
locally constant, with finite image $f(X)$, so we have a
decomposition $X=\coprod_{c\in f(X)}X_c$, for $X_c\subseteq X$ the
open and closed complex analytic subspace with $f(x)=c$ for each
$x\in X_c$.

\smallskip

For each $c\in\C$, write $U_c=f^{-1}(c)\subseteq U$. Then we have a vanishing cycle functor
$\phi_{f-c}^p:\Perv(U)\ra\Perv(U_c)$. So we may form
$\phi_{f-c}^p(A_U[\dim U])$ in $\Perv(U_c)$, since $A_U[\dim
U]\in\Perv(U)$ by Theorem \ref{sm2thm2}(f). One can show
$\phi_{f-c}^p(A_U[\dim U])$ is supported on the closed subset
$X_c=\Crit(f)\cap U_c$ in $U_c$, where $X_c=\es$ unless $c\in f(X)$.
That is, $\phi_{f-c}^p(A_U[\dim U])$ lies in~$\Perv(U_c)_{X_c}$.
But Theorem \ref{sm2thm2}(c) says $\Perv(U_c)_{X_c}$ and
$\Perv(X_c)$ are equivalent categories, so we may regard
$\phi_{f-c}^p(A_U[\dim U])$ as a perverse sheaf on $X_c$. That is,
we can consider $\phi_{f-c}^p(A_U[\dim U])\vert_{X_c}=i_{X_c,U_c}^*
\bigl(\phi_{f-c}^p(A_U[\dim U])\bigr)$ in $\Perv(X_c)$, where
$i_{X_c,U_c}:X_c\ra U_c$ is the inclusion morphism.
Also, $\Perv(X)=\bigop_{c\in f(X)}\Perv(X_c)$. 

\smallskip

Define the
{\it perverse sheaf of vanishing cycles\/ $\PV_{U,f}^\bu$ of\/}
$U,f$ in $\Perv(X)$ to be $$\PV_{U,f}^\bu=\bigop_{c\in
f(X)}\phi_{f-c}^p(A_U[\dim U])\vert_{X_c}.$$ That is, $\PV_{U,f}^\bu$
is the unique perverse sheaf on $X=\Crit(f)$ with
$\PV_{U,f}^\bu\vert_{X_c}=\phi_{f-c}^p(A_U[\dim U])\vert_{X_c}$ for
all~$c\in f(X)$.

\smallskip

Under Verdier duality, we have $A_U[\dim U]\cong\bD_U(A_U[\dim U])$
by Theorem \ref{sm2thm2}(f), so $\phi_{f-c}^p(A_U[\dim U])
\cong\bD_{U_c}\bigl(\phi_{f-c}^p(A_U[\dim U])\bigr)$ by 
\eq{sm2thm4}. Applying $i_{X_c,U_c}^*$ and using $\bD_{X_c}\ci
i_{X_c,U_c}^*\cong i_{X_c,U_c}^!\ci\bD_{U_c}$ by 
\eq{sm2thm1} and $i_{X_c,U_c}^!\cong i_{X_c,U_c}^*$ on
$\Perv(U_c)_{X_c}$ by Theorem \ref{sm2thm2}(c) also gives
\begin{equation*}
\phi_{f-c}^p(A_U[\dim U])\vert_{X_c}\cong \bD_{X_c}\bigl(
\phi_{f-c}^p(A_U[\dim U])\vert_{X_c}\bigr).
\end{equation*}
Summing over all $c\in f(X)$ yields a canonical isomorphism
\e
\si_{U,f}:\PV_{U,f}^\bu\,{\buildrel\cong\over\longra}\,\bD_X(\PV_{U,f}^\bu).
\label{sm2eq6}
\e

For $c\in f(X)$, we have a monodromy operator
$M_{U,f-c}:\phi_{f-c}^p(A_U[\dim U])\ab\ra \phi_{f-c}^p(A_U[\dim
U])$, which restricts to $\phi_{f-c}^p(A_U[\dim U])\vert_{X_c}$.
Define the {\it twisted monodromy operator\/}
$\tau_{U,f}:\PV_{U,f}^\bu\ra\PV_{U,f}^\bu$ by
\e
\begin{split}
\tau_{U,f}\vert_{X_c}&=(-1)^{\dim U}M_{U,f-c}\vert_{X_c}:\phi_{f-c}^p(A_U[\dim U])\vert_{X_c}\longra \phi_{f-c}^p(A_U[\dim
U])\vert_{X_c},
\end{split}
\label{sm2eq7}
\e
for each $c\in f(X)$. Here `twisted' refers to the sign $(-1)^{\dim
U}$ in \eq{sm2eq7}. We include this sign change as it makes
monodromy act naturally under transformations which change dimension
--- without it, equation \eq{sm5eq15} below would only commute up to
a sign $(-1)^{\dim V-\dim U}$, not commute --- and it normalizes the
monodromy of any nondegenerate quadratic form to be the identity. 
The sign $(-1)^{\dim U}$ also corresponds to the
twist `$(\ha\dim U)$' in the definition of the mixed
Hodge module of vanishing cycles $\HV_{U,f}^{\bu}$.

\label{sm2def5}
\end{dfn}

The (compactly-supported) hypercohomology $\bH^*(\PV_{U,f}^\bu),
\bH^*_{\rm cs}(\PV_{U,f}^\bu)$ from \eq{sm2eq1} is an important
invariant of $U,f$. If $A$ is a field then the isomorphism
$\si_{U,f}$ in \eq{sm2eq6} implies that $\bH^k(\PV_{U,f}^\bu)
\cong\bH^{-k}_{\rm cs}(\PV_{U,f}^\bu)^*$, a form of Poincar\'e
duality.

\smallskip

We defined $\smash{\PV_{U,f}^\bu}$ in perverse sheaves over a base
ring $A$. Writing $\PV_{U,f}^\bu(A)$ to denote the base ring, one
can show that $\PV_{U,f}^\bu(A)\cong \PV_{U,f}^\bu(\Z)\otL_\Z A$.
Thus, we may as well take $A=\Z$, or $A=\Q$ if we want $A$ to be a
field, since the case of general $A$ contains no more information.

\smallskip

There is a `Thom--Sebastiani Theorem for perverse sheaves', due to
Massey \cite{Mass1} and Sch\"urmann \cite[Cor.~1.3.4]{Schu}. Applied
to $\PV_{U,f}^\bu$, it yields:

\begin{thm} Let\/ $U,V$ be complex manifolds and\/ $f:U\ra\C,$
$g:V\ra\C$ be holomorphic, so that\/ $f\boxplus g:U\t V\ra\C$ is regular
with\/ $(f\boxplus g)(u,v):=f(u)+g(v)$. Set\/ $X=\Crit(f)$ and\/
$Y=\Crit(g)$ as complex analytic spaces of\/ $U,V,$ so that\/
$\Crit(f\boxplus g)=X\t Y$. Then there is a natural isomorphism
\e
\smash{\TS_{U,f,V,g}:\PV_{U\t V,f\boxplus g}^\bu\longra
\PV_{U,f}^\bu\boxtL \PV_{V,g}^\bu}
\label{sm2eq8}
\e
in $\Perv(X\t Y),$ such that the following diagrams commute:
\ea
\begin{gathered}
{}\!\!\!\!\!\!\!\!\!\!\!\!\!\!\! \xymatrix@!0@C=58pt@R=50pt{
*+[r]{\PV_{U\t V,f\boxplus g}^\bu} \ar[rrrrr]_{\si_{U\t V,f\boxplus
g}} \ar[d]^(0.4){\TS_{U,f,V,g}} &&&&&
*+[l]{\bD_{X\t Y}(\PV_{U\t V,f\boxplus g}^\bu)} \\
*+[r]{\raisebox{25pt}{\quad\,\,\,$\begin{subarray}{l}\ts\PV_{U,f}^\bu\boxtL \\
\ts\PV_{V,g}^\bu\end{subarray}$}}
\ar[rr]^(0.57){\si_{U,f}\boxtL\si_{V,g}} &&
*+[r]{\raisebox{25pt}{${}\,\,\,\begin{subarray}{l}\ts
\bD_X(\PV_{U,f}^\bu)\boxtL\!\!\!\!\!{} \\
\ts \bD_Y(\PV_{V,g}^\bu)\end{subarray}$}} \ar[rrr]^(0.33)\cong &&&
*+[l]{\bD_{X\t Y}\bigl(\PV_{U,f}^\bu\boxtL \PV_{V,g}^\bu\bigr),\!\!{}}
\ar[u]^{\bD_{X\t Y}(\TS_{U,f,V,g})} }\!\!\!\!\!{}
\end{gathered}
\label{sm2eq9}
\\[-10pt]
\begin{gathered}
{}\!\!\!\!\xymatrix@!0@C=290pt@R=40pt{ *+[r]{\PV_{U\t V,f\boxplus
g}^\bu} \ar[r]_{\tau_{U\t V,f\boxplus g}} \ar[d]^{\TS_{U,f,V,g}} &
*+[l]{\PV_{U\t V,f\boxplus g}^\bu} \ar[d]_{\TS_{U,f,V,g}} \\
*+[r]{\PV_{U,f}^\bu\boxtL \PV_{V,g}^\bu} \ar[r]^{\tau_{U,f}\boxtL\tau_{V,g}}
& *+[l]{\PV_{U,f}^\bu\boxtL \PV_{V,g}^\bu.\!\!{}} }\!\!\!\!\!{}
\end{gathered}
\label{sm2eq10}
\ea
\label{sm2thm5}
\end{thm}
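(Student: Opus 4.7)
The plan is to deduce the stated isomorphism and both compatibilities from the underlying Thom--Sebastiani theorem for vanishing cycles of arbitrary perverse sheaves, due to Massey \cite{Mass1} and Sch\"urmann \cite{Schu}, applied to the shifted constant sheaves $A_U[\dim U]$ and $A_V[\dim V]$. The key input is the natural isomorphism, for $\cP^\bu\in\Perv(U)$ and $\cQ^\bu\in\Perv(V)$ and critical values $c\in f(X)$, $d\in g(Y)$,
$$\phi^p_{f\boxplus g-(c+d)}(\cP^\bu\boxtL\cQ^\bu)\big\vert_{X_c\t Y_d}\cong \phi^p_{f-c}(\cP^\bu)\big\vert_{X_c}\boxtL \phi^p_{g-d}(\cQ^\bu)\big\vert_{Y_d},$$
obtained by comparing the universal cover $\widetilde{(U\t V)_*}$ of $(f\boxplus g)^{-1}(\C^*)$ with the product $\widetilde{U_*}\t\widetilde{V_*}$, via nonproper base change and a careful analysis of the fibration of $(f\boxplus g)^{-1}(\C^*)$ over $\C\sm\{c+d\}$.

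Substituting $\cP^\bu=A_U[\dim U]$ and $\cQ^\bu=A_V[\dim V]$ and using $A_U[\dim U]\boxtL A_V[\dim V]\cong A_{U\t V}[\dim(U\t V)]$, I would sum over all $(c,d)\in f(X)\t g(Y)$. Since $f\boxplus g$ takes the constant value $c+d$ on $X_c\t Y_d$, and the decomposition $X\t Y=\coprod_{c,d}X_c\t Y_d$ refines the decomposition $X\t Y=\coprod_{e}(X\t Y)_e$ of $\Crit(f\boxplus g)$ indexed by values $e\in(f\boxplus g)(X\t Y)$, regrouping directly yields the required isomorphism $\TS_{U,f,V,g}:\PV^\bu_{U\t V,f\boxplus g}\longra\PV^\bu_{U,f}\boxtL\PV^\bu_{V,g}$.

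For the Verdier duality square \eqref{sm2eq9}, the recipe is naturality. The isomorphism $\si_{U,f}$ of \eqref{sm2eq6} is constructed by applying $\phi^p_{f-c}$ to the canonical self-duality $A_U[\dim U]\cong\bD_U(A_U[\dim U])$ of Theorem \ref{sm2thm2}(f), composing with the commutation $\phi^p_{f-c}\ci\bD_U\cong\bD_{U_c}\ci\phi^p_{f-c}$ from \eqref{sm2thm4}, and restricting. The self-duality on $U\t V$ factors as the exterior product of the self-dualities on $U,V$ through the compatibility of $\boxtL$ with $\bD$ (which is a general fact about the six operations); combined with the naturality of the Thom--Sebastiani isomorphism in both variables, this forces \eqref{sm2eq9} to commute after unwinding each arrow in the definition of $\si$.

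For the monodromy square \eqref{sm2eq10}, the deck transformation on $\widetilde{\C^*}$ used to define $M_{U\t V,f\boxplus g-(c+d)}$ pulls back along the fibre-product identification to the product of the deck transformations for $f-c$ and $g-d$, yielding $M_{U\t V,f\boxplus g-(c+d)}\cong M_{U,f-c}\boxtL M_{V,g-d}$ after applying the isomorphism of Step 1. The sign convention \eqref{sm2eq7} then gives
$$\tau_{U\t V,f\boxplus g}\big\vert_{X_c\t Y_d}=(-1)^{\dim U+\dim V}\bigl(M_{U,f-c}\boxtL M_{V,g-d}\bigr)\big\vert_{X_c\t Y_d}=\bigl(\tau_{U,f}\boxtL\tau_{V,g}\bigr)\big\vert_{X_c\t Y_d},$$
because $(-1)^{\dim U+\dim V}=(-1)^{\dim U}(-1)^{\dim V}$, which is precisely the reason for inserting the sign twist in Definition \ref{sm2def5}. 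The main obstacle is Step 1, the Massey--Sch\"urmann Thom--Sebastiani theorem itself, which requires genuine analytic input about nearby cycles on products; granted this, the two compatibility diagrams reduce to careful bookkeeping organized around naturality and the multiplicativity of the sign twist.
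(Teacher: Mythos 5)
The paper does not prove this theorem; it states it as background and cites Massey \cite{Mass1} and Sch\"urmann \cite[Cor.~1.3.4]{Schu} (and, for the compatibility diagrams, refers implicitly to \cite{BBDJS}). Your proposal takes exactly the route the paper implicitly suggests: apply the Massey--Sch\"urmann Thom--Sebastiani isomorphism for vanishing cycles of general perverse sheaves, specialize to $\cP^\bu=A_U[\dim U]$ and $\cQ^\bu=A_V[\dim V]$, regroup over critical values, and then check the two compatibilities by naturality. So the approach is the same as the paper's, only with the omitted verification steps filled in.

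Two small points worth tightening. First, your regrouping remark should acknowledge that several pairs $(c,d)$ may satisfy $c+d=e$, so $(X\t Y)_e$ can split into several clopen pieces $X_c\t Y_d$; the isomorphism is assembled pieceby piece, which is fine but should be said. Second, the Verdier-duality square is not a free consequence of ``naturality of $\boxtL$'' alone: you also need that the Massey--Sch\"urmann isomorphism itself commutes with Verdier duality, using the commutation $\phi^p_{f}\ci\bD_U\cong\bD_{U_0}\ci\phi^p_f$ of \eqref{sm2thm4} on both factors and on the product; this is a genuine lemma (it is proved in \cite{BBDJS}), not merely bookkeeping. Your treatment of the monodromy square, including the observation that $(-1)^{\dim U+\dim V}=(-1)^{\dim U}(-1)^{\dim V}$ is precisely the point of the twist in \eqref{sm2eq7}, is correct and matches the intended design of Definition \ref{sm2def5}.
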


Finally, we introduce some notation for pullbacks of $\PV_{V,g}^\bu$ by
local biholomorphisms.

\begin{dfn} Let $U,V$ be complex manifolds, $\Phi:U\ra V$ an
\'etale morphism, and $g:V\ra\C$ an analytic function. Write
$f=g\ci\Phi:U\ra\C$, and $X=\Crit(f)$, $Y=\Crit(g)$ as
$\C$-submanifolds of $U,V$. Then $\Phi\vert_X:X\ra Y$ is a local biholomorphism.
Define an isomorphism
\e
\PV_\Phi:\PV_{U,f}^\bu\longra\Phi\vert_X^*\bigl(\PV_{V,g}^\bu
\bigr)\quad \text{in $\Perv(X)$}
\label{sm2eq14}
\e
by the commutative diagram for each $c\in f(X)\subseteq g(Y)$:
\e
\begin{gathered}
\xymatrix@C=145pt@R=17pt{
*+[r]{\PV_{U,f}^\bu\vert_{X_c}\!=\!\phi_{f-c}^p(A_U[\dim U])
\vert_{X_c}} \ar[r]_(0.53)\al \ar@<2ex>[d]^{\PV_\Phi\vert_{X_c}} &
*+[l]{\phi_{f-c}^p\ci\Phi^*(A_V[\dim V]))\vert_{X_c}}
\ar@<-2ex>[d]_\be \\
*+[r]{\Phi\vert_{X_c}^*\bigl(\PV_{V,g}^\bu\bigr)} \ar@{=}[r] &
*+[l]{\Phi_0^*\ci\phi_{g-c}^p\ci(A_V[\dim V]))\vert_{X_c}.\!\!{}}}\!\!\!\!{}
\end{gathered}
\label{sm2eq15}
\e
Here $\al$ is $\phi_{f-c}^p$ applied to the canonical isomorphism
$A_U\ra\Phi^*(A_V)$, noting that $\dim U=\dim V$ as $\Phi$ is
a local biholomorphism.
By naturality of the isomorphisms $\al,\be$ in \eq{sm2eq15} we find
the following commute, where $\si_{U,f},\tau_{U,f}$ are as
in~\eq{sm2eq6}--\eq{sm2eq7}:
\ea
\begin{gathered}
\xymatrix@!0@C=140pt@R=35pt{ *+[r]{\PV_{U,f}^\bu}
\ar[rr]_{\si_{U,f}} \ar[d]^{\PV_\Phi} &&
*+[l]{\bD_X(\PV_{U,f}^\bu)} \\
*+[r]{\Phi\vert_X^*\bigl(\PV_{V,g}^\bu\bigr)}
\ar[r]^{\Phi\vert_X^*(\si_{V,g})} &
{\Phi\vert_X^*\bigl(\bD_Y(\PV_{V,g}^\bu)\bigr)} \ar[r]^(0.37)\cong &
*+[l]{\bD_X\bigl(\Phi\vert_X^*(\PV_{V,g}^\bu)\bigr),\!\!{}}
\ar[u]^{\bD_X(\PV_\Phi)} }
\end{gathered}
\label{sm2eq16}\\
\begin{gathered}
\xymatrix@!0@C=280pt@R=35pt{ *+[r]{\PV_{U,f}^\bu}
\ar[r]_{\tau_{U,f}} \ar[d]^{\PV_\Phi} &
*+[l]{\PV_{U,f}^\bu} \ar[d]_{\PV_\Phi} \\
*+[r]{\Phi\vert_X^*(\PV_{V,g}^\bu)} \ar[r]^{\Phi\vert_X^*(\tau_{V,g})}
& *+[l]{\Phi\vert_X^*(\PV_{V,g}^\bu).\!\!{}} }
\end{gathered}
\label{sm2eq17}
\ea

If $U=V$, $f=g$ and $\Phi=\id_U$
then~$\PV_{\id_U}=\id_{\PV_{U,f}^\bu}$.
If $W$ is another complex manifold, $\Psi:V\ra W$ is a local biholomorphism, and
$h:W\ra\C$ is analytic with $g=h\ci\Psi:V\ra\C$, then composing
\eq{sm2eq15} for $\Phi$ with $\Phi\vert_{X_c}^*$ of \eq{sm2eq15} for
$\Psi$ shows that
\e
\PV_{\Psi\ci\Phi}=\Phi\vert_X^*(\PV_\Psi)\ci\PV_\Phi:
\PV_{U,f}^\bu\longra(\Psi\ci\Phi)\vert_X^*\bigl(\PV_{W,h}^\bu\bigr).
\label{sm2eq18}
\e
That is, the isomorphisms $\PV_\Phi$ are functorial.
\label{sm2def6}
\end{dfn}

\smallskip

We conclude by saying that because of the Riemann--Hilbert correspondence, all our results on
perverse sheaves of vanishing cycles on complex
analytic spaces over a well-behaved base
ring $A$, translate immediately when $A=\C$ to the corresponding
results for $\cD$-modules of vanishing cycles, with no extra work.
and also to mixed Hodge modules on complex analytic spaces,
see \cite[\S 2.9-2.10]{BBDJS}.

\subsection{Stabilizing perverse sheaves of vanishing
cycles}
\label{s2.2}

To set up notation for Theorem
\ref{sm5thm2} below, we need the following theorem, which is proved
in Joyce~\cite[Prop.s 2.15, 2.16 \& 2.18]{Joyc1}.

\begin{thm}[Joyce \cite{Joyc1}] Let\/ $U,V$ be complex manifolds,
$f:U\ra\C,$ $g:V\ra\C$ be holomorphic, and\/ $X=\Crit(f),$ $Y=\Crit(g)$
as complex analytic subspaces of\/ $U,V$. Let\/ $\Phi:U\hookra V$ be a closed
embedding of\/ complex manifolds with\/ $f=g\ci\Phi:U\ra\C,$ and suppose
$\Phi\vert_X:X\ra V\supseteq Y$ is an isomorphism $\Phi\vert_X:X\ra
Y$. Then:
\smallskip

\noindent{\bf(i)} For each $x\in X\subseteq U$ there exist open $U'\subseteq U$ and $V'\subseteq V$ with $x\in U'$ and $\Phi(U')\subseteq V',$  an open neighbourhood $T$ of $0\in\C^n$ where $n=\dim V-\dim U,$ and a biholomorphism $\al\t\be : V' \ra U'\t T,$ such that
\begin{equation*}
(\al\t\be)\ci\Phi\vert_{U'}=\id_{U'}\t 0:U'\longra U'\t T, 
\end{equation*}
and $g\vert_{V'}=f\ci\al +(z_1^2+\cdots+z_n^2)\ci\be:V'\ra\C$. Thus, setting $f'=f\vert_{U'}:U'\ra\C,$ $g'=g\vert_{V'}:V'\ra\C,$ $\Phi'=\Phi\vert_{U'}:U'\ra V'$, $X'=\Crit(f')\subseteq U',$ and $Y'=\Crit(g')\subseteq V',$ then $f'=g'\ci\Phi':U'\ra\C,$ and $\Phi'\vert_{X'}:X'\ra Y',$ $\al\vert_{Y'}:Y'\ra X'$ are biholomorphisms.
\smallskip

\noindent{\bf(ii)} Write\/ $N_{\sst UV}$ for the normal bundle of\/
$\Phi(U)$ in $V,$ regarded as a vector bundle on $U$ in
the exact sequence of vector bundles on $U\!:$
\e
\xymatrix@C=20pt{ 0 \ar[r] & TU \ar[rr]^(0.4){\d\Phi} && \Phi^*(TU)
\ar[rr]^(0.55){\Pi_{\sst UV}} && N_{\sst UV} \ar[r] & 0.}
\label{sm5eq2}
\e
Then there exists a unique $q_{\sst UV}\in H^0(S^2N_{\sst
UV}^*\vert_X)$ which is a nondegenerate quadratic form on $N_{\sst
UV}\vert_X,$ such that whenever $U',V',\Phi',\be,n,X'$
are as in {\bf(i)\rm,} writing $\langle\d z_1,\ldots,\d
z_n\rangle_{U'}$ for the trivial vector bundle on $U'$ with basis
$\d z_1,\ldots,\d z_n,$ there is a natural isomorphism
$\hat\be:\langle\d z_1,\ldots,\d z_n\rangle_{U'}\ra N_{\sst
UV}^*\vert_{U'}$ making the following diagram commute:
\begin{gather}
\begin{gathered}
\xymatrix@C=130pt@R=15pt{
*+[r]{N_{\sst UV}^*\vert_{U'}} \ar[r]_(0.3){\Pi_{\sst UV}^*\vert_{U'}}
 & *+[l]{\Phi^*(T^*V)\vert_{U'}=\Phi^{\prime
*}(T^*V\vert_{V'})} \ar[d]_{\Phi^{\prime *}} \\
*+[r]{\langle\d z_1,\ldots,\d z_n\rangle_{U'}=\Phi^{\prime *}
\ci\be^*(T_0^*\C^n)} \ar[r]^(0.7){\Phi^{\prime *}(\d\be^*)}
\ar@{.>}[u]_{\hat\be} & *+[l]{\Phi^{\prime *}(T^*V'),}}
\end{gathered}
\label{sm5eq3}\\
\text{and\/}\qquad q_{\sst UV}\vert_{X'}=(S^2\hat\be)
\vert_{X'}(\d z_1\ot\d z_1+\cdots+\d z_n\ot\d z_n).
\label{sm5eq4}
\end{gather}

\noindent{\bf(iii)} Now suppose $W$ is another complex manifold,
$h:W\ra\C$ is holomorphic, $Z=\Crit(h)$ as a complex analytic subspace of\/ $W,$
and\/ $\Psi:V\hookra W$ is a closed embedding of complex analytic subspaces
with\/ $g=h\ci\Psi:V\ra\C$ and\/ $\Psi\vert_Y:Y\ra Z$ an
isomorphism. Define $N_{\sst VW},q_{\sst VW}$ and\/ $N_{\sst
UW},q_{\sst UW}$ using $\Psi:V\hookra W$ and\/ $\Psi\ci\Phi:U\hookra
W$ as in {\bf(ii)} above. Then there are unique morphisms $\ga_{\sst
UVW},\de_{\sst UVW}$ which make the following diagram of vector
bundles on $U$ commute, with straight lines exact:
\e
\begin{gathered}
\xymatrix@!0@C=19pt@R=9pt{ &&&&&&&&&&&&&&& 0 \ar[ddl] \\
&&&&&&&&&&&&&&&& 0 \ar[dll] \\
&&&&&& 0 \ar[dddrr] &&&&&&&& TU \ar[dddllllll]_{\d\Phi}
\ar[ddddddllll]^(0.4){\d(\Psi\ci\Phi)} \\ \\ \\
&&&&&&&& \Phi^*(TV) \ar[dddllllll]_{\Pi_{\sst UV}}
\ar[dddrr]^(0.6){\Phi^*(\d\Psi)} \\ \\
0 \ar[drr] \\
&& N_{\sst UV} \ar[dll]  \ar@{.>}[dddrrrrrr]_{\ga_{\sst UVW}}
&&&&&&&& (\Psi\ci\Phi)^*(TW) \ar[ddddddrrrr]^{\Phi^*(\Pi_{\sst VW})}
\ar[dddll]^(0.4){\Pi_{\sst UW}} \\
0 \\ \\
&&&&&&&& N_{\sst UW}  \ar[dddll] \ar@{.>}[dddrrrrrr]_{\de_{\sst
UVW}} \\ \\ \\
&&&&&& 0  &&&&&&&& \Phi^*(N_{\sst VW}) \ar[drr] \ar[ddr] \\
&&&&&&&&&&&&&&&& 0 \\
&&&&&&&&&&&&&&& 0.}\!\!\!\!\!\!\!\!\!\!\!\!\!\!\!\!\!\!\!\!\!{}
\end{gathered}
\label{sm5eq5}
\e

Restricting to $X$ gives an exact sequence of vector bundles:
\e
\xymatrix@C=14pt{ 0 \ar[r] & N_{\sst UV}\vert_X
\ar[rrr]^(0.48){\ga_{\sst UVW}\vert_X} &&& N_{\sst UW}\vert_X
\ar[rrr]^(0.45){\de_{\sst UVW}\vert_X} &&& \Phi\vert_X^*(N_{\sst
VW}) \ar[r] & 0.}
\label{sm5eq6}
\e
Then there is a natural isomorphism of vector bundles on $X$
\e
N_{\sst UW}\vert_X\cong N_{\sst UV}\vert_X\op \Phi\vert_X^*(N_{\sst
VW}),
\label{sm5eq7}
\e
compatible with the exact sequence {\rm\eq{sm5eq6},} which
identifies
\e
\begin{split}
q_{\sst UW}&\cong q_{\sst UV}\op \Phi\vert_X^*(q_{\sst VW})\op 0
\qquad\text{under
the splitting}\\
S^2N_{\sst UW}\vert_X^*&\cong S^2N_{\sst
UV}\vert_X^*\op\Phi\vert_X^*\bigl(S^2N_{\sst VW}^*\vert_Y\bigr) \op
\bigl(N_{\sst UV}^*\vert_X\ot \Phi\vert_X^*(N_{\sst VW}^*)\bigr).
\end{split}
\label{sm5eq8}
\e

\label{sm5thm1}
\end{thm}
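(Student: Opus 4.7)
The plan is to prove the three parts in turn, building on a parametrized holomorphic Morse lemma.

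For \textbf{(i)}, since $\Phi$ is a closed embedding of complex manifolds, the holomorphic implicit function theorem gives local coordinates $(u_1,\ldots,u_m,z_1,\ldots,z_n)$ on $V$ near $\Phi(x)$ (with $m=\dim U$, $n=\dim V-\dim U$) in which $\Phi(U')$ is cut out by $z=0$ and $\Phi$ is the map $u\mapsto(u,0)$. Since $f=g\ci\Phi$, we may write $g(u,z)=f(u)+h(u,z)$ with $h(u,0)\equiv 0$. The hypothesis $\Crit(g)=\Phi(\Crit(f))$ forces $\partial_z h(u,0)=0$ for all $u$ in a neighbourhood of $X$, and moreover the Hessian $\partial_z^2 h(u,0)$ must be nondegenerate along $X$, since otherwise $\Crit(g)$ would properly contain $\Phi(X)$ in the normal direction. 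After shrinking $U'$ the nondegeneracy persists, and the holomorphic Morse lemma with parameters (obtained by iterating the standard path-integral argument holomorphically in $u$) produces a biholomorphism $\beta$ of a neighbourhood of $\{z=0\}$ which sends $h(u,z)$ to $z_1^2+\cdots+z_n^2$ and fixes $\{z=0\}$ pointwise; packaging this with a possible reparametrization $\alpha$ of $U'$ yields the desired product biholomorphism $\alpha\t\beta$.

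For \textbf{(ii)}, on any chart arising from (i), define $q_{\sst UV}\vert_{X'}$ by formula \eqref{sm5eq4}; the isomorphism $\hat\beta$ in \eqref{sm5eq3} is forced by identifying $\langle\d z_1,\ldots,\d z_n\rangle$ with the conormal bundle via the composition $\Phi^{\prime*}\ci\d\beta^*\circ(\Pi_{\sst UV}^*)^{-1}$. To check independence of the chart, observe that at a point $x\in X$ any two Morse normal forms differ by a holomorphic transition preserving $\{z=0\}$ and sending $\sum z_i^2$ to another sum of squares; the linearization of this transition on the normal bundle must therefore be orthogonal with respect to the standard form, so the induced quadratic form on $N_{\sst UV}\vert_X$ is the same. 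Existence, uniqueness and nondegeneracy of $q_{\sst UV}$ follow, and equation \eqref{sm5eq4} is the defining property.

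For \textbf{(iii)}, the morphisms $\ga_{\sst UVW},\de_{\sst UVW}$ making \eqref{sm5eq5} commute, and the exactness of \eqref{sm5eq6}, are obtained by diagram chasing (snake lemma) from the commutative square of tangent maps $\d\Phi,\d\Psi,\d(\Psi\ci\Phi)$. To produce the splitting \eqref{sm5eq7} and the decomposition \eqref{sm5eq8}, apply the local normal form of (i) twice: first for $\Phi$, choose coordinates $(u,z)$ on $V'\subseteq V$ with $g=f(u)+\sum z_i^2$; then apply (i) to $\Psi$ using the coordinates $(u,z)$ on $V$ as the base, obtaining coordinates $(u,z,w)$ on $W'\subseteq W$ with $h=g+\sum w_j^2=f(u)+\sum z_i^2+\sum w_j^2$. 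In such a joint chart, $N_{\sst UW}\vert_{X'}=\langle\partial_{z_i},\partial_{w_j}\rangle$ is manifestly the $q_{\sst UW}$-orthogonal direct sum of $N_{\sst UV}\vert_{X'}=\langle\partial_{z_i}\rangle$ and a complement mapping isomorphically to $\Phi\vert_{X'}^*(N_{\sst VW})=\langle\partial_{w_j}\rangle$, giving \eqref{sm5eq7} and the block decomposition $q_{\sst UW}=\sum \d z_i\ot\d z_i+\sum \d w_j\ot\d w_j=q_{\sst UV}\op\Phi\vert_{X'}^*(q_{\sst VW})\op 0$. Covering $X$ by such joint charts and applying the uniqueness statement in (ii) globalises both the splitting and \eqref{sm5eq8}.

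The principal obstacle is arranging compatible Morse normal forms for $\Phi$ and $\Psi$ simultaneously, rather than independently. The issue is that the Morse transformation from (i) applied to $\Psi$ might a priori disturb the already-normalized $z$-coordinates on $V$. One resolves this by applying (i) to $\Psi$ \emph{with $V$ carrying its Morse-normalized coordinates fixed as the base}: since the Morse transformation for $\Psi$ is, by construction, the identity on the base $V=\{w=0\}$ to all orders, it does not alter the $z$-directions, and the normal form $f+\sum z_i^2+\sum w_j^2$ is achieved globally on a joint chart. The rest is careful bookkeeping of the functoriality of $\hat\beta$ under composition of embeddings.
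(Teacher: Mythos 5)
The paper does not prove this theorem itself; it quotes it from Joyce \cite{Joyc1} (Prop.~2.15, 2.16, 2.18). So I assess your proof on its own merits.

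There is a genuine gap in part (i). You assert that "the hypothesis $\Crit(g)=\Phi(\Crit(f))$ forces $\partial_z h(u,0)=0$ for all $u$ in a neighbourhood of $X$." This is false: the hypothesis only forces $\partial_z g$ to vanish on $\Phi(X)$, not on the entire zero section $\{z=0\}$. Concretely, take $U=\C$, $V=\C^2$, $f(u)=u^2$, $\Phi(u)=(u,0)$, $g(u,z)=u^2+uz+z^2$. Then $f=g\circ\Phi$, $\Crit(f)=\{0\}$, $\Crit(g)=\{(0,0)\}$, and the ideals $(\partial_u g,\partial_z g)=(2u+z,\,u+2z)=(u,z)$, so $\Phi|_X:X\ra Y$ is a scheme isomorphism, yet $\partial_z h(u,0)=u\neq 0$ away from the origin. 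Because of this, the version of the fibrewise Morse lemma you invoke (one that "fixes $\{z=0\}$ pointwise" and normalizes $h$ to $\sum z_i^2$ with $\alpha$ a "reparametrization of $U'$") does not apply. What is true, and what the argument actually requires, is that the normal Hessian of $g$ along $X$ is nondegenerate (this part you get right) and that, after a \emph{fibrewise completion of the square} absorbing the linear term $\partial_z h(u,0)\cdot z$, the base projection $\alpha:V'\to U'$ is a genuine submersion with $\alpha\circ\Phi=\id_{U'}$ that is \emph{not} the coordinate projection $(u,z)\mapsto u$. In the example above, $g=(u+z/2)^2+\tfrac34 z^2$, so $\alpha(u,z)=u+z/2$. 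Your proof, as written, only handles the special case $\partial_z h(u,0)\equiv 0$; the general case needs this extra step, and once you admit that $\alpha$ moves in the $z$-directions you must verify the base-point shift is holomorphic, well-defined near $X$ (this uses nondegeneracy of the Hessian plus the implicit function theorem), and compatible with the identity $g=f\circ\alpha+Q\circ\beta$.

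A secondary, more minor gap is in part (iii): you obtain the splitting \eq{sm5eq7} from a joint normal-form chart and claim it globalizes "by the uniqueness statement in (ii)." But (ii)'s uniqueness is about $q_{\sst UV}$, not about a choice of complement to $\gamma_{\sst UVW}(N_{\sst UV}\vert_X)$ inside $N_{\sst UW}\vert_X$. To make the splitting canonical you should identify the complement intrinsically, e.g.\ as the $q_{\sst UW}$-orthocomplement of $\gamma_{\sst UVW}(N_{\sst UV}\vert_X)$ (nondegeneracy of $q_{\sst UW}$ on $N_{\sst UW}\vert_X$ ensures this is a genuine complement); then the joint chart shows this orthocomplement maps isomorphically to $\Phi\vert_X^*(N_{\sst VW})$ and carries $q_{\sst UW}$ to $\Phi\vert_X^*(q_{\sst VW})$. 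The remainder of the diagram-chase for \eq{sm5eq5}--\eq{sm5eq6}, and your observation in (ii) that chart changes linearize to orthogonal transformations on the normal bundle, are sound.
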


Following \cite[Def.s 2.19 \& 2.24]{Joyc1}, we define:

\begin{dfn} Let $U,V$ be complex manifolds, $f:U\ra\C$,
$g:V\ra\C$ be holomorphic, and $X=\Crit(f)$, $Y=\Crit(g)$ as
complex manifolds of $U,V$. Suppose $\Phi:U\hookra V$ is a closed
embedding of complex manifolds with $f=g\ci\Phi:U\ra\C$ and
$\Phi\vert_X:X\ra Y$ an isomorphism. Then Theorem \ref{sm5thm1}(ii)
defines the normal bundle $N_{\sst UV}$ of $U$ in $V$, a vector
bundle on $U$ of rank $n=\dim V-\dim U$, and a nondegenerate
quadratic form $q_{\sst UV}\in H^0(S^2N_{\sst UV}^*\vert_X)$. Taking
top exterior powers in the dual of \eq{sm5eq2} gives an isomorphism
of line bundles on $U$
\begin{equation*}
\rho_{\sst UV}:K_U\ot\La^nN_{\sst UV}^*
\,{\buildrel\cong\over\longra}\,\Phi^*(K_V),
\end{equation*}
where $K_U,K_V$ are the canonical bundles of $U,V$.

\smallskip

Write $X^\red$ for the reduced subspace of $X$. As $q_{\sst
UV}$ is a nondegenerate quadratic form on $N_{\sst UV}\vert_X,$ its
determinant $\det(q_{\sst UV})$ is a nonzero section of
$(\La^nN_{\sst UV}^*)\vert_X^{\ot^2}$. Define an isomorphism of line
bundles on~$X^\red$:
\e
J_\Phi=\rho_{\sst UV}^{\ot^2}\ci
\bigl(\id_{K_U^2\vert_{X^\red}}\ot\det(q_{\sst
UV})\vert_{X^\red}\bigr):K_U^{\ot^2}\big\vert_{X^\red}
\,{\buildrel\cong\over\longra}\,\Phi\vert_{X^\red}^*
\bigl(K_V^{\ot^2}\bigr).
\label{sm5eq9}
\e

Since principal $\Z_2$-bundles $\pi:P\ra X$ in the sense of
Definition \ref{sm2def3} are a complex analytic
topological notion, and $X^\red$ and $X$ have the same topological
space, principal
$\Z_2$-bundles on $X^\red$ and on $X$ are equivalent. Define
$\pi_\Phi:P_\Phi\ra X$ to be the principal $\Z_2$-bundle which
parametrizes square roots of $J_\Phi$ on $X^\red$. That is,
complex analytic local sections $s_\al:X\ra P_\Phi$ of $P_\Phi$
correspond to local isomorphisms $\al: K_U\vert_{X^\red}
\ra\Phi\vert_{X^\red}^*(K_V)$ on $X^\red$ with~$\al\ot\al=J_\Phi$.

\smallskip

Now suppose $W$ is another complex manifold, $h:W\ra\C$ is
holomorphic, $Z=\Crit(h)$ as a complex analytic subspace of $W$, and $\Psi:V\hookra
W$ is a closed embedding of complex manifolds with $g=h\ci\Psi:V\ra\C$
and $\Psi\vert_Y:Y\ra Z$ an isomorphism. Then Theorem
\ref{sm5thm1}(iii) applies, and from \eq{sm5eq7}--\eq{sm5eq8} we can
deduce that
\e
\begin{split}
J_{\Psi\ci\Phi}=\Phi\vert_{X^\red}^*(J_\Psi)\ci J_\Phi:
K_U^{\ot^2}\big\vert_{X^\red}\,{\buildrel\cong\over\longra}\,
(\Psi\ci\Phi)\vert_{X^\red}^*\bigl(K_W^{\ot^2}\bigr)
=\,\Phi\vert_{X^\red}^*\bigl[\Psi\vert_{Y^\red}^*
\bigl(K_W^{\ot^2}\bigr)\bigr].
\end{split}
\label{sm5eq10}
\e
For the principal $\Z_2$-bundles $\pi_\Phi:P_\Phi\ra X$,
$\pi_\Psi:P_\Psi\ra Y$, $\pi_{\Psi\ci\Phi}:P_{\Psi\ci\Phi}\ra X$,
equation \eq{sm5eq10} implies that there is a canonical isomorphism
\e
\Xi_{\Psi,\Phi}:P_{\Psi\ci\Phi}\,{\buildrel\cong\over\longra}\,
\Phi\vert_X^*(P_\Psi)\ot_{\Z_2}P_\Phi.
\label{sm5eq11}
\e
It is also easy to see that these $\Xi_{\Psi,\Phi}$ have an
associativity property under triple compositions, that is, given
another complex manifold $T$, holomorphic $e:T\ra\C$ with
$Q:=\Crit(e)$, and $\Up:T\hookra U$ a closed embedding with
$e=f\ci\Up:T\ra\C$ and $\Up\vert_Q:Q\ra X$ an isomorphism, then
\e
\begin{split}
\bigl(\id_{(\Phi\ci\Up)\vert_Q^*(P_\Psi)}&\ot\kern
.1em\Xi_{\Phi,\Up}\bigr)\ci \Xi_{\Psi,\Phi\ci\Up}=
\bigl(\Up\vert_Q^*(\Xi_{\Psi,\Phi})\ot\id_{P_\Up}\bigr)\ci
\Xi_{\Psi\ci\Phi,\Up}:\\
&P_{\Psi\ci\Phi\ci\Up}\longra
(\Phi\ci\Up)\vert_Q^*(P_\Psi)\ot_{\Z_2}
\Up\vert_Q^*(P_\Phi)\ot_{\Z_2}P_\Up.
\end{split}
\label{sm5eq12}
\e
\label{sm5def}
\end{dfn}

The reason for restricting to $X^\red$ above is the following
\cite[Prop.~2.20]{Joyc1}, whose proof uses the fact that $X^\red$ is
reduced in an essential way.

\begin{lem} In Definition\/ {\rm\ref{sm5def},} the isomorphism
$J_\Phi$ in \eq{sm5eq9} and the principal\/ $\Z_2$-bundle\/
$\pi_\Phi:P_\Phi\ra X$ depend only on $U,\ab V,\ab X,\ab Y,\ab f,g$
and\/ $\Phi\vert_X:X\ra Y$. That is, they do not depend on
$\Phi:U\ra V$ apart from\/~$\Phi\vert_X:X\ra Y$.
\label{sm5lem}
\end{lem}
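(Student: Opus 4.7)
My plan is to reduce the statement to a purely local computation via Theorem \ref{sm5thm1}(i), then to exploit the reducedness of $X^\red$ in an essential way to match the two recipes.

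Let $\Phi_0,\Phi_1:U\hookra V$ be two closed embeddings satisfying the hypotheses of Definition \ref{sm5def} with the same restriction $\Phi_0\vert_X=\Phi_1\vert_X$. Since $J_\Phi$ is an isomorphism of line bundles on $X^\red$, equality can be checked locally. I would fix $x\in X$ and apply Theorem \ref{sm5thm1}(i) to $\Phi_0$ to get open neighborhoods $U'\ni x$ in $U$ and $V'\supseteq\Phi_0(U')$ in $V$ together with a biholomorphism $V'\cong U'\t T$, $T\subseteq\C^n$ open around $0$, identifying $\Phi_0$ with the inclusion $\iota:u\mapsto(u,0)$ and $g$ with $f(u)+z_1^2+\cdots+z_n^2$. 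In these coordinates $\Phi_1$ has the form $u\mapsto(\phi(u),\psi(u))$ for holomorphic $\phi:U'\to U'$ and $\psi:U'\to T$, and the hypotheses translate into $\phi\vert_{X'}=\id_{X'}$, $\psi\vert_{X'}=0$, together with the functional equation $f(u)=f(\phi(u))+\psi_1(u)^2+\cdots+\psi_n(u)^2$.

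Next I would compute $(N_{\sst UV},q_{\sst UV},\rho_{\sst UV})$ for both embeddings in these coordinates. For $\Phi_0=\iota$ everything is canonical: $N_{\sst UV}$ is trivial with basis $\pd_{z_1},\ldots,\pd_{z_n}$, $q_{\sst UV}$ is the standard form $\d z_1^2+\cdots+\d z_n^2$, and $\rho_{\sst UV}$ is the obvious isomorphism of line bundles coming from \eq{sm5eq2}. For $\Phi_1$ the same data is perturbed by the derivatives of $\phi$ and $\psi$, and there is a natural comparison between the two normal bundles and between the two quadratic forms on a neighborhood of $X^\red$ in $U'$. My plan is to write out this comparison explicitly and track how it transforms $\rho_{\sst UV}^{\ot^2}\ci(\id\ot\det(q_{\sst UV}))$, i.e.\ the line-bundle isomorphism $J_\Phi$ of \eq{sm5eq9}.

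The main obstacle, and where reducedness of $X^\red$ enters essentially, is verifying that the discrepancy between $J_{\Phi_0}$ and $J_{\Phi_1}$ equals the identity on $X^\red$. Since $\phi-\id$ and $\psi$ vanish along $X^\red$, their derivatives contribute corrections lying in the ideal sheaf $I_{X^\red}$. Differentiating the functional equation $f(u)=f(\phi(u))+\sum_i\psi_i(u)^2$ and using $\d f\vert_{X^\red}=0$ (since $X=\Crit(f)$) together with $\psi\vert_{X^\red}=0$ kills the linear-in-$\psi$ coupling, forcing the leading correction to $q_{\sst UV}$ to lie in $I_{X^\red}^2$ and hence to vanish on the reduced scheme. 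The residual discrepancy in $\det(q_{\sst UV})$ is then cancelled by the corresponding discrepancy in $\rho_{\sst UV}^{\ot^2}$ coming from the dual of \eq{sm5eq2}, so the combination $J_\Phi$ is unchanged. Once $J_{\Phi_0}=J_{\Phi_1}$ is established locally, a standard gluing argument gives equality globally on $X^\red$, and the canonical identification $P_{\Phi_0}\cong P_{\Phi_1}$ then follows from the characterization of $P_\Phi$ as the principal $\Z_2$-bundle of square roots of $J_\Phi$.
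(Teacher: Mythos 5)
The paper itself does not give a proof of this lemma --- it is a restatement of \cite[Prop.~2.20]{Joyc1}, cited with the remark that the proof uses reducedness of $X^\red$ in an essential way. So there is no in-paper argument to compare against; I can only assess your sketch on its own terms.

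Your overall strategy (pass to the local normal form for $\Phi_0$ via Theorem~\ref{sm5thm1}(i), write $\Phi_1=(\phi,\psi)$ with the functional equation $f=f\circ\phi+\sum_i\psi_i^2$, then compare $N_{\sst UV},q_{\sst UV},\rho_{\sst UV}$ explicitly) is the natural starting point and is in the right spirit. However, there is a concrete gap in the key step. You assert that because $\phi-\id$ and $\psi$ vanish on $X^\red$, ``their derivatives contribute corrections lying in the ideal sheaf $I_{X^\red}$.'' This is false: vanishing of a holomorphic function on $X^\red$ puts it in $I_{X^\red}$, but its derivative need not lie in $I_{X^\red}\cdot\Omega^1$. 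In fact $\d\Phi_1\vert_{X^\red}$ can genuinely differ from $\d\Phi_0\vert_{X^\red}$. A minimal example: $U=\C$, $f(u)=u^2$, $V=\C^2$, $g(x,y)=x^2+y^2$, $X=\{0\}$, $\Phi_0(u)=(u,0)$, $\Phi_1(u)=(u^2,\,u\sqrt{1-u^2})$. Then $\Phi_0\vert_X=\Phi_1\vert_X$ and $f=g\circ\Phi_i$, yet $\d\Phi_0(0)=(1,0)$ while $\d\Phi_1(0)=(0,1)$: the derivative correction is nonzero, not in $I_{X^\red}$, and in particular the normal bundles $N^{\Phi_0}$ and $N^{\Phi_1}$ and the forms $q_{\sst UV}$ are genuinely different at points of $X^\red$. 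Differentiating the functional equation twice along $X^\red$ does not force the derivative correction to vanish; it only yields the Hessian-preservation constraint $(\d\phi)^T\Hess(f)\,(\d\phi)+2(\d\psi)^T(\d\psi)=\Hess(f)$ on $X^\red$ (in the example this reads $\phi'(0)^2+\psi'(0)^2=1$, an $\SO$-type constraint, not $\d\phi=\id,\d\psi=0$). So the claim that the leading correction to $q_{\sst UV}$ lies in $I_{X^\red}^2$ and hence vanishes on $X^\red$ is not correct as reasoning.

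Your final sentence --- that the discrepancy in $\det(q_{\sst UV})$ is cancelled by the discrepancy in $\rho_{\sst UV}^{\ot 2}$ --- is the right mechanism, and indeed in the example above both $J_{\Phi_0}$ and $J_{\Phi_1}$ send $\d u^{\ot 2}\mapsto(\d x\wedge\d y)^{\ot 2}$ for different intermediate reasons (the sign ambiguity disappearing after squaring, and the Hessian constraint tying $\det(q)$ and $\rho$ together). But the proposal does not actually carry out that computation; it rests on the false vanishing claim to avoid it. To make this a proof you would need to fix a genuine comparison isomorphism between $N^{\Phi_0}_{\sst UV}$ and $N^{\Phi_1}_{\sst UV}$ over $X^\red$, compute how $\det(q_{\sst UV})$ and $\rho_{\sst UV}^{\ot 2}$ transform under it using the Hessian-preservation constraint, and verify cancellation --- this is precisely where the reducedness of $X^\red$ is used (the constraint only holds after restricting to $X^\red$, where equality of line-bundle sections can be checked pointwise).
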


Using the notation of Definition \ref{sm5def}, we can restate 
Theorem 5.4 in \cite{BBDJS}:

\begin{thm}{\bf(a)} Let\/ $U,V$ be complex manifolds, $f:U\ra\C,$
$g:V\ra\C$ be holomorphic, and\/ $X=\Crit(f),$ $Y=\Crit(g)$ as
complex analytic subspaces of\/ $U,V$. Let\/ $\Phi:U\hookra V$ be a closed
embedding of\/ complex analytic subspaces with\/ $f=g\ci\Phi:U\ra\C,$ and suppose
$\Phi\vert_X:X\ra V\supseteq Y$ is an isomorphism $\Phi\vert_X:X\ra
Y$. Then there is a natural isomorphism of perverse sheaves on
$X\!:$
\e
\Th_\Phi:\PV_{U,f}^\bu\longra\Phi\vert_X^*\bigl(\PV_{V,g}^\bu\bigr)
\ot_{\Z_2}P_\Phi,
\label{sm5eq13}
\e
where\/ $\PV_{U,f}^\bu,\PV_{V,g}^\bu$ are the perverse sheaves of
vanishing cycles from\/ {\rm\S\ref{s2.1},} and\/ $P_\Phi$ the
principal\/ $\Z_2$-bundle from Definition\/ {\rm\ref{sm5def},} and
if\/ $\cQ^\bu$ is a perverse sheaf on\/ $X$ then
$\cQ^\bu\ot_{\Z_2}P_\Phi$ is as in Definition\/
{\rm\ref{sm2def3}}. Also the following diagrams commute, where
$\si_{U,f},\si_{V,g},\tau_{U,f},\tau_{V,g}$ are as
in\/~{\rm\eq{sm2eq6}--\eq{sm2eq7}:}
\e
\begin{gathered}
\xymatrix@!0@C=185pt@R=45pt{
*+[r]{\PV_{U,f}^\bu} \ar[r]_(0.25){\Th_\Phi}
\ar@<2ex>[d]^{\si_{U,f}} &
*+[l]{\Phi\vert_X^*\bigl(\PV_{V,g}^\bu\bigr)\ot_{\Z_2}P_\Phi}
\ar[r]_(0.18){\raisebox{-11pt}{$\st\Phi\vert_X^*(\si_{V,g})\ot\id$}}
& *+[l]{\Phi\vert_X^*\bigl(\bD_Y(\PV_{V,g}^\bu)\bigr)\!\ot_{\Z_2}
\!P_\Phi} \ar@<-2ex>[d]_\cong
\\
*+[r]{\bD_X(\PV_{U,f}^\bu)} &&
*+[l]{\bD_X\bigl(\Phi\vert_X^*(\PV_{V,g}^\bu)\!\ot_{\Z_2}\!P_\Phi\bigr),}
\ar[ll]_(0.7){\bD_X(\Th_\Phi)} }\!\!\!\!\!{}
\end{gathered}
\label{sm5eq14}
\e
\e
\begin{gathered}
\xymatrix@!0@C=300pt@R=45pt{
*+[r]{\PV_{U,f}^\bu} \ar[r]_(0.45){\Th_\Phi}
\ar@<2ex>[d]^{\tau_{U,f}} &
*+[l]{\Phi\vert_X^*\bigl(\PV_{V,g}^\bu\bigr)\ot_{\Z_2}P_\Phi}
\ar@<-2ex>[d]_{\Phi\vert_X^*(\tau_{V,g})\ot\id} \\
*+[r]{\PV_{U,f}^\bu} \ar[r]^(0.45){\Th_\Phi} &
*+[l]{\Phi\vert_X^*\bigl(\PV_{V,g}^\bu\bigr)\ot_{\Z_2}P_\Phi.}
}\!\!\!\!\!{}
\end{gathered}
\label{sm5eq15}
\e

If\/ $U=V,$ $f=g,$ $\Phi=\id_U$ then $\pi_\Phi:P_\Phi\ra X$ is
trivial, and\/ $\Th_\Phi$ corresponds to $\id_{\PV_{U,f}^\bu}$ under
the natural isomorphism $\id_X^*(\PV_{U,f}^\bu)
\ot_{\Z_2}P_\Phi\cong\PV_{U,f}^\bu$.
\smallskip

\noindent{\bf(b)} The isomorphism $\Th_\Phi$ in \eq{sm5eq13} depends
only on $U,\ab V,\ab X,\ab Y,\ab f,g$ and\/ $\Phi\vert_X:X\ra Y$.
That is, if\/ $\ti\Phi:U\ra V$ is an alternative choice for $\Phi$
with\/ $\Phi\vert_X=\ti\Phi\vert_X:X\ra Y,$ then
$\Th_\Phi=\Th_{\smash{\ti \Phi}},$ noting that\/
$P_\Phi=P_{\smash{\ti\Phi}}$ by Lemma\/~{\rm\ref{sm5lem}}.

\smallskip
\noindent{\bf(c)} Now suppose $W$ is another complex manifold,
$h:W\ra\C$ is holomorphic, $Z=\Crit(h),$ and\/ $\Psi:V\hookra W$ is a
closed embedding with\/ $g=h\ci\Psi:V\ra\C$ and\/ $\Psi\vert_Y:Y\ra
Z$ an isomorphism. Then Definition\/ {\rm\ref{sm5def}} defines
principal\/ $\Z_2$-bundles $\pi_\Phi:P_\Phi\ra X,$
$\pi_\Psi:P_\Psi\ra Y,$ $\pi_{\Psi\ci\Phi}:P_{\Psi\ci\Phi}\ra X$ and
an isomorphism $\Xi_{\Psi,\Phi}$ in {\rm\eq{sm5eq11},} and part\/
{\bf(a)} defines isomorphisms of perverse sheaves
$\Th_\Phi,\Th_{\Psi\ci\Phi}$ on $X$ and\/ $\Th_\Psi$ on $Y$. Then
the following commutes in $\Perv(X)\!:$
\e
\begin{gathered}
\xymatrix@C=185pt@R=27pt{*+[r]{\PV_{U,f}^\bu}
\ar[r]_(0.45){\Th_{\Psi\ci\Phi}} \ar@<2ex>[d]^{\Th_\Phi} &
*+[l]{(\Psi\ci\Phi)\vert_X^*\bigl(\PV_{W,h}^\bu\bigr)\ot_{\Z_2}
P_{\Psi\ci\Phi}} \ar@<-2ex>[d]_{\id\ot \Xi_{\Psi,\Phi}} \\
*+[r]{\Phi\vert_X^*\bigl(\PV_{V,g}^\bu\bigr)\!\ot_{\Z_2}\!P_\Phi}
\ar[r]^(0.33){\raisebox{6pt}{$\st\Phi\vert_X^*(\Th_\Psi)\ot\id$}} &
*+[l]{\Phi\vert_X^*\!\ci\!\Psi\vert_Y^*\bigl(\PV_{W,h}^\bu\bigr)
\!\ot_{\Z_2}\!\Phi\vert_X^*(P_\Psi)\!\ot_{\Z_2}\!P_\Phi.}
}\!\!\!\!\!{}
\end{gathered}
\label{sm5eq16}
\e
\label{sm5thm2}
\end{thm}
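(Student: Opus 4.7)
\textbf{Proof proposal for Theorem \ref{sm5thm2}.}

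The plan is to establish the isomorphism $\Th_\Phi$ locally via the quadratic stabilization of Theorem \ref{sm5thm1}(i), then glue using the stack property of perverse sheaves (Theorem \ref{sm2thm3}). First, for each $x \in X$, apply Theorem \ref{sm5thm1}(i) to obtain open neighbourhoods $U' \subseteq U$ of $x$ and $V' \subseteq V$ of $\Phi(x)$, an open neighbourhood $T$ of $0 \in \C^n$, and a biholomorphism $\alpha \times \beta : V' \to U' \times T$ identifying $\Phi|_{U'}$ with $\id_{U'} \times 0$ and $g|_{V'}$ with $f|_{U'} \boxplus q_n$, where $q_n(z_1,\ldots,z_n) = z_1^2 + \cdots + z_n^2$. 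Over such a chart, the Thom--Sebastiani isomorphism of Theorem \ref{sm2thm5} gives
\e
\PV^\bu_{V',g|_{V'}} \cong (\alpha \times \beta)^*\bigl(\PV^\bu_{U',f|_{U'}} \boxtL \PV^\bu_{T,q_n}\bigr).
\label{propeq1}
\e
A direct computation (using that $\Crit(q_n) = \{0\}$ and the Milnor fibre of $q_n$ is a point for even $n$ and a sphere for odd $n$, together with a sign) shows that $\PV^\bu_{T,q_n}$ is a rank one local system on $\{0\}$, canonically trivialized \emph{after} a choice of square root of $\det(q_n)$ — that is, its stalk at $0$ is naturally a $\Z_2$-torsor identified with the fibre of $P_\Phi$ at $x$ via Definition \ref{sm5def}. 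Pulling \eq{propeq1} back by $\Phi|_{X'}$ and twisting by $P_\Phi$ to absorb this sign ambiguity therefore yields a canonical local isomorphism $\Th_\Phi^{(x)}$ of the required form.

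Next, I would check the local isomorphisms are independent of the choice of stabilization data $(\alpha,\beta)$: two such stabilizations at the same point differ by an element of the orthogonal group $O(n,\C)$ acting on the $T$-factor, and the resulting ambiguity is precisely the determinant-of-$q_{UV}$ ambiguity killed by twisting with $P_\Phi$. This uses Lemma \ref{sm5lem}, which guarantees that $P_\Phi$ depends only on $\Phi|_X$. By the stack property of perverse sheaves, Theorem \ref{sm2thm3}, the collection $\{\Th_\Phi^{(x)}\}$ glues to a global isomorphism $\Th_\Phi$ on $X$, and this glued object is unique. The identity case $\Phi = \id_U$ is automatic since then $n = 0$, the stabilization is trivial, and $P_\Phi$ is canonically trivialized. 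Part \textbf{(b)} then follows because the construction of $\Th_\Phi^{(x)}$ only used the quadratic form $q_{UV}$ and $\Phi|_X$, not the particular embedding $\Phi$.

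For compatibility with Verdier duality \eq{sm5eq14} and monodromy \eq{sm5eq15}, I would reduce to the local quadratic model and invoke the compatibility of Thom--Sebastiani with $\sigma$ and $\tau$ established in \eq{sm2eq9}--\eq{sm2eq10}. The essential point is that $\sigma_{T,q_n}$ and $\tau_{T,q_n}$ act by multiplication by $+1$ on $\PV^\bu_{T,q_n}$ with the twisted sign convention of \eq{sm2eq7} — exactly the normalization arranged by the factor $(-1)^{\dim U}$ in Definition \ref{sm2def5}. For part \textbf{(c)}, given the further embedding $\Psi: V \hookra W$, one stabilizes both $\Phi$ and $\Psi$ independently and uses the orthogonal decomposition $q_{UW} \cong q_{UV} \oplus \Phi|_X^*(q_{VW})$ from \eq{sm5eq8}, which produces the cocycle $\Xi_{\Psi,\Phi}$ of \eq{sm5eq11} after tensoring with $P_\Phi \otimes_{\Z_2} \Phi|_X^*(P_\Psi)$; associativity of Thom--Sebastiani then yields \eq{sm5eq16}.

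The main obstacle will be the naturality and gluing argument in step two: showing that the local trivializations of $\PV^\bu_{T,q_n}$ coming from different choices of $(\alpha,\beta)$ differ precisely by the $\Z_2$-torsor structure of $P_\Phi$, so that the twisted isomorphisms $\Th_\Phi^{(x)}$ satisfy the cocycle condition of Theorem \ref{sm2thm3}(ii). This requires analyzing the action of $O(n,\C)$ on the vanishing cycles of $q_n$ and matching it with the monodromy representation that defines the local system $\cL_{P_\Phi}$, and is where the square-root choice genuinely enters.
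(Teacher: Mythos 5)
Note first that the paper does \emph{not} prove Theorem~\ref{sm5thm2}: the sentence immediately preceding it reads ``we can restate Theorem~5.4 in \cite{BBDJS}'', so the result is imported from that reference without proof. Your task is therefore to be measured against the argument in \cite{BBDJS}, and your sketch does follow essentially the same route: Joyce's local stabilization (Theorem~\ref{sm5thm1}(i)), Thom--Sebastiani (Theorem~\ref{sm2thm5}), the rank-one analysis of $\PV^\bu_{T,q_n}$, twisting by $P_\Phi$, and gluing via the stack property (Theorem~\ref{sm2thm3}). Your outlines of (b) and (c) are also in line with the reference.

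That said, two points deserve flagging. First, a factual error: the Milnor fibre of $q_n=z_1^2+\cdots+z_n^2$ at the origin is homotopy equivalent to $S^{n-1}$ for \emph{every} $n\ge 1$, not ``a point for even $n$ and a sphere for odd $n$.'' The parity of $n$ enters only through the monodromy $(-1)^n$ on $H^{n-1}(S^{n-1})$, which is precisely what the twist $(-1)^{\dim U}$ in \eq{sm2eq7} is designed to cancel; your later remark that $\tau_{T,q_n}$ becomes the identity with the twisted convention is the correct statement, and the erroneous Milnor fibre description does not propagate into the structure of the argument. Second, and more seriously: the entire content of the theorem lies in the step you defer to the end as ``the main obstacle,'' namely showing that the local isomorphisms $\Th_\Phi^{(x)}$ built from different stabilization data agree, and hence glue. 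Your heuristic that two stabilizations at the same point differ by an element of $O(n,\C)$ is too crude --- in general one must compare stabilizations of possibly different ranks and over different open sets, pass through a common larger stabilization, and track how the generator of $H^{n-1}(S^{n-1})$ transforms under the resulting comparison isomorphism, matching this against the monodromy data defining $\cL_{P_\Phi}$. This is exactly where \cite{BBDJS} does the real work, and a complete proof would have to carry it out rather than identify it as a difficulty. As submitted, the proposal is a correct high-level blueprint with a minor factual slip, but it is not yet a proof.
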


\subsection{Complex Lagrangian intersections in complex symplectic manifolds}
\label{s2.3}

We will start with a basic definition to fix the notation:

\begin{dfn} 
Let $(S, \om)$ be a {\it symplectic manifold}, i.e., a complex manifold $S$
endowed with a closed non-degenerate holomorphic $2$-form $\om\in\Omega^2_S$.
Denote the complex dimension of $S$ by $2n.$

\smallskip

A complex submanifold $M \subset S$ is {\it Lagrangian} if 
the restriction of the symplectic form $\om$ on $S$ to a $2$-form on $M$
vanishes and $\dim M = n.$

\smallskip

Holomorphic coordinates, $x_1, \ldots , x_n, y_1, .\ldots , y_n$ on
an open subset 
$S'\subset S$
in the complex analytic topology, are called {\it Darboux
coordinates} if 
$
\om=\sum_{i=1}^n \d y_i \wedge \d x_i.
$
\label{basic}
\end{dfn}

\begin{dfn} 
Given an $n$-dimensional manifold $N$, let us denote by $S=T^*N$ its cotangent bundle.  
For any chosen point $p\in U\subset N,$ for $U$ an open subset of $N$ containing $x$, 
let us denote by $(x_1,\ldots,x_n)$ a set of coordinates. Then for any $x\in U,$ the differentials
$(\d x_1)_x, \ldots, (\d x_n)_x$ form a basis of $T^*_x N.$ 

\smallskip

Namely, if $y \in T^*_x N$ then $y=
\sum_{i=1}^n y_i (\d x_i)_x$ for some complex coefficients $y_1,\ldots, y_n.$ This induces a set of coordinates 
$(x_1, \ldots, x_n,y_1,\ldots, y_n)$
on $T^*U,$ so a coordinate chart for $T^*N,$ induced by $(x_1,\ldots,x_n)$ on $U$.
It is well known that transition functions on the overlaps are holomorphic and this gives 
the structure of a complex manifold of dimension $2n$ to $T^*N$. 

\smallskip

Next, one can define a $2$-form on $T^*U$ by 
$\om=\sum_{i=1}^n \d x_i \wedge \d y_i.$ It is easy to check that the definition is coordinate-independent. 
Define the $1$-form $\al=\sum_{i=1}^n  y_i \wedge \d x_i.$ Clearly $\om = -\d \al,$ and $\al$ is intrinsically defined. The $1$-form $\al$ is called in literature the {\it Liouville form}, and the $2$-form $\om$ is the {\it canonical symplectic form}. 
\end{dfn}

Next, we will review symmetric obstruction theories on Lagrangian intersections from \cite{BeFa}, and we state a crucial definition for our program.

\smallskip

Let $(S,\om)$ be a complex symplectic manifold as above, and $L,M\subseteq S$ be Lagrangian
submanifolds. Let $X=L\cap M$ be the intersection as a complex analytic space. Then $X$
carries a canonical symmetric obstruction theory 
$\varphi:E^\bu \ra \bL_X$
in the sense of \cite{BeFa}, which can be represented by the complex $E^\bu\simeq[T^*S\vert_X\ra T^*L\vert_X\op T^*M\vert_X]$
with $T^*S\vert_X$ in degree $-1$ and $T^*L\vert_X\op T^*M\vert_X$
in degree zero. Hence
\begin{equation}
\det(E^\bu)\cong K_S\vert_X^{-1}\ot K_L\vert_X\ot K_M\vert_X
\cong K_L\vert_X\ot K_M\vert_X,
\label{detL}
\end{equation}
since $K_S\cong\O_S$. 
This motivates the following: 

\begin{dfn} 
We define an {\it orientation} of a complex Lagrangian submanifold $L$ to be a choice of square root line bundle 
$K_L^{1/2}$ for $K_L.$ 
\label{or.1}
\end{dfn}

\begin{rem}
The previous definition is inspired by \cite{BBDJS} and close to `orientation data' in Kontsevich and Soibelman \cite{KoSo1}.
We point out that {\it spin structure} could have been a better choice of name than orientation, but we use orientations for consistency with \cite{BBDJS,BBJ,BJM,Joyc1}. Also, for {\it real} Lagrangians, a square root $K_L^{1/2}$ induces an orientation on $L$ in the usual sense.
\label{spin}
\end{rem}

Now, we recall well known established results in complex symplectic geometry which will be used 
to prove our main results. We start with the {\it complex Darboux theorem}. 

\begin{thm} 
Let $(S,\om)$ be a complex symplectic manifold. Then locally in the complex analytic topology around a point
$p\in S$ is always possible to choose holomorphic Darboux coordinates.
\label{darboux}
\end{thm}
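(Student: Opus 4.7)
The plan is to adapt the classical Moser trick to the holomorphic setting, exploiting that on a polydisk the holomorphic Poincaré lemma, the holomorphic inverse function theorem, and the existence/uniqueness of flows of time-dependent holomorphic vector fields all hold.

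First, I would linearize at the point. Fix $p \in S$ and pick a basis of $T_pS$ in which the alternating form $\omega_p$ takes the standard form $\sum_{i=1}^n v_i^* \wedge u_i^*$; this is pure linear symplectic algebra. By the holomorphic inverse function theorem, this basis extends to holomorphic coordinates $(u_1,\ldots,u_n,v_1,\ldots,v_n)$ on an open polydisk $S' \ni p$, and I set $\omega_0 := \sum_{i=1}^n \d v_i \wedge \d u_i$ on $S'$. By construction $\omega_0$ and $\omega$ are closed holomorphic $2$-forms that agree at $p$, so $\omega - \omega_0$ is a closed holomorphic $2$-form vanishing at $p$.

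Next I apply the holomorphic Poincaré lemma on the polydisk: shrinking $S'$ if necessary, write $\omega - \omega_0 = \d\alpha$ for a holomorphic $1$-form $\alpha$ on $S'$, chosen so that $\alpha(p) = 0$ (one may subtract the value at $p$ of a primitive). For $t \in [0,1]$ consider the interpolating family $\omega_t := \omega_0 + t(\omega - \omega_0)$. At $p$ each $\omega_t$ coincides with $\omega_p$, which is non-degenerate, so after further shrinking $S'$ the form $\omega_t$ is non-degenerate on $S'$ for every $t \in [0,1]$. Then there is a unique time-dependent holomorphic vector field $X_t$ on $S'$ determined by
\e
\iota_{X_t}\omega_t = -\alpha.
\e
Because $\alpha(p)=0$, we have $X_t(p) = 0$ for every $t$, so the flow $\phi_t$ of $X_t$ exists on a possibly smaller neighbourhood $S'' \subseteq S'$ of $p$ for all $t \in [0,1]$ by holomorphic ODE theory, and $\phi_t(p) = p$, $\phi_0 = \id$.

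Finally, the standard Moser computation, valid in the holomorphic category because it only uses Cartan's magic formula $L_{X_t} = \d\ci\iota_{X_t} + \iota_{X_t}\ci\d$ and $\d\omega_t = 0$, gives
\e
\frac{\d}{\d t}\phi_t^*\omega_t = \phi_t^*\bigl(L_{X_t}\omega_t + \partial_t\omega_t\bigr) = \phi_t^*\bigl(\d\iota_{X_t}\omega_t + (\omega - \omega_0)\bigr) = \phi_t^*\bigl(-\d\alpha + \d\alpha\bigr) = 0.
\e
Therefore $\phi_1^*\omega = \phi_0^*\omega_0 = \omega_0$, and pulling back the coordinates $(u_1,\ldots,u_n,v_1,\ldots,v_n)$ through $\phi_1$ yields holomorphic coordinates $(x_1,\ldots,x_n,y_1,\ldots,y_n)$ on a neighbourhood of $p$ in which $\omega = \sum_{i=1}^n \d y_i \wedge \d x_i$, i.e., Darboux coordinates in the sense of Definition \ref{basic}.

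The main obstacle, modest but worth stating, is ensuring that every step of Moser's argument genuinely survives in the complex analytic category: the holomorphic Poincaré lemma on a polydisk (which is true because polydisks are Stein and contractible, so the holomorphic de Rham cohomology vanishes in positive degree), and the existence of flows of time-dependent holomorphic vector fields near a zero of the field (standard from the holomorphic existence theorem for ODEs with parameters). Granted these two ingredients, the proof is entirely parallel to the smooth case.
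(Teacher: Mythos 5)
Your proof is correct. The paper does not actually prove Theorem~\ref{darboux}; it is stated as a classical fact from complex symplectic geometry and used as a black box, so there is no argument in the paper to compare against. The holomorphic Moser trick you present is the standard way to establish it, and each ingredient you invoke survives in the complex analytic category: the holomorphic Poincar\'e lemma on a polydisk (the usual radial homotopy operator preserves holomorphicity), non-degeneracy of $\omega_t$ on a common neighbourhood of $p$ by openness plus compactness of $[0,1]$, and local-in-time existence of the flow near the common zero $p$ of the time-dependent field.

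The one point worth spelling out, since it is the place where the holomorphic category genuinely differs from the smooth one, concerns what ``flow of a holomorphic vector field'' means. The $X_t$ you produce is a holomorphic section of $T^{1,0}S'$, and the diffeomorphisms $\phi_t$ are the flow of the associated real vector field $X_t + \overline{X_t}$. That these $\phi_t$ are biholomorphisms is exactly the statement $L_{X_t+\overline{X_t}}J=0$, which holds because $X_t$ is holomorphic. The Moser computation you wrote is unaffected by the passage to the real field: $\iota_{\overline{X_t}}\omega_t = 0$ since $\omega_t$ is of type $(2,0)$, and $\bar\partial\alpha = 0$ since $\alpha$ is a holomorphic $1$-form, so Cartan's formula collapses to $L_{X_t+\overline{X_t}}\omega_t = \d\iota_{X_t}\omega_t = -\d\alpha$, exactly as you used. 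With that reading, the argument is complete.
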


So, basically, every symplectic manifold $S$ is locally isomorphic to the cotangent bundle $T^*N$ of a manifold $N.$ The fibres of the induced vector bundle structure on $S$ are Lagrangian submanifolds, so complex analytically locally defining on $S$ a foliation by Lagrangian submanifolds, i.e., a {\it polarization}:

\begin{dfn}
A {\it polarization} of a symplectic manifold $(S,\om)$ is a holomorphic Lagrangian fibration $\pi : S' \ra E,$ where $S'\subseteq S$ is open. 
\label{polar}
\end{dfn}

Note that it is always possible to choose locally near a point $p\in S$ in the complex analytic topology Darboux coordinates $(x_1,\ldots ,x_n,y_1,\ldots,y_n)$ and compatible coordinates $x_i$ on $E$ such that $\pi$ can be identified with the projection  $(x_1,\ldots ,x_n,y_1,\ldots,y_n) \to (x_1,\ldots,x_n).$

\smallskip

We will usually consider polarizations which are {\it transverse} to the Lagrangians whose intersection we wish to study. The point of the transversality condition is that 
we have a canonical one-to-one correspondence between sections $s$ of the polarization 
$E$ for the symplectic manifold $(S,\om)$, such that $s^* (\om)=0$ and Lagrangian submanifolds of $S$ transverse to $E.$ Moreover, in terms of coordinates, near every point of 
a Lagrangian
$M\subset S$ transverse to the polarization $E$ there exists a set of Darboux coordinates
$(x_1,\ldots ,x_n,y_1,\ldots,y_n)$ such that $M=\{y_1=\cdots = y_n=0\},$ $E=\langle \d x_1, \ldots , \d x_n\rangle$
and the Euler form $s$ of $M$ inside $E$ is given by 
$s=\sum_i y_i \d x_i.$

\smallskip

If $L,M$ are complex Lagrangian submanifolds in $(S,\om)$, and we consider the projection $$\pi:(x_1,\ldots ,x_n,y_1,\ldots,y_n) \to (x_1,\ldots,x_n)$$ defining local coordinates on $L,$ then we can always assume to choose such coordinates $x_i,y_i$ transverse to $L,M$ at a point, and transverse to other coordinate systems too. In other words, we are using the projection $\pi$ as a polarization, and we assume that the leaves are transverse to the two Lagrangians, so that $L$ and $M$ turn into the graphs of $1$-forms on $N.$ The Lagrangian condition implies that these $1$-forms on $N$ are closed.

\smallskip

Recall now the {\it Lagrangian neighbourhood theorem}:

\begin{thm} 
If $M \subset (S,\om)$ is a complex Lagrangian submanifold, then there exists a complex analytic neighbourhood $V\subset M$ of a point $p\in M$ isomorphic as a complex symplectic manifold to a neighbourhood $U$ of $p$ in $T^*M,$
and $M$ is identified with the zero section in $T^*M.$
\label{lag_ngb}
\end{thm}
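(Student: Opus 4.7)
The plan is to reduce to the Weinstein Lagrangian neighbourhood theorem adapted to the holomorphic category, combining a holomorphic tubular neighbourhood with a Moser-type isotopy argument. Everything is local near $p\in M$, which makes the construction essentially formal.

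\medskip

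\noindent\textbf{Step 1: Identify a holomorphic tubular neighbourhood.} First I would exhibit the normal bundle $N_{M/S}$ as isomorphic to $T^*M$ via the symplectic form. At each $p\in M$, the Lagrangian condition means $T_pM$ is maximal isotropic inside $(T_pS,\om_p)$, so the contraction $v\mapsto \om_p(v,\cdot)\vert_{T_pM}$ descends to a $\C$-linear isomorphism $N_{M/S,p}\cong T_p^*M$. This globalizes to a holomorphic isomorphism of vector bundles $\Phi:N_{M/S}\,{\buildrel\cong\over\longra}\,T^*M$. Next, pick holomorphic coordinates on $S$ near $p$ of the form $(x_1,\ldots,x_n,y_1,\ldots,y_n)$ with $M=\{y_1=\cdots=y_n=0\}$; the map sending such a point to $(x,\sum_i y_i\,\d x_i)\in T^*M$ is a local biholomorphism of a neighbourhood of $p\in S$ onto a neighbourhood of the zero section of $T^*M$ in which $M$ corresponds to the zero section. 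I would choose this map so that its differential along $M$ induces $\Phi$ under the natural identifications.

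\medskip

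\noindent\textbf{Step 2: Apply Moser's trick.} Pulling back $\om$ via the tubular neighbourhood, I obtain two holomorphic symplectic forms $\om$ and the canonical form $\om_0=\sum_i \d y_i\wedge \d x_i$ on a neighbourhood $U$ of the zero section of $T^*M$, both vanishing on $M$. By construction of $\Phi$ in Step 1, $\om$ and $\om_0$ agree on $M$, not merely as restrictions (which are both zero) but as elements of $\Lambda^2 T^*S\vert_M$. Thus $\beta:=\om-\om_0$ is a closed holomorphic $2$-form vanishing to first order along $M$. By a holomorphic relative Poincar\'e lemma, applied via the fibrewise retraction $r_t(x,y)=(x,ty)$ and the homotopy operator $\alpha:=\int_0^1 \iota_{Y_t}(r_t^*\beta)\,\d t$ with $Y_t=\tfrac{1}{t}\sum_i y_i\partial_{y_i}$, there exists a holomorphic $1$-form $\alpha$ on a neighbourhood of $M$ with $\d\alpha=\beta$ and $\alpha\vert_M=0$. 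Setting $\om_t=(1-t)\om_0+t\om$ for $t\in[0,1]$, each $\om_t$ is non-degenerate on a smaller neighbourhood of $M$, so I can uniquely solve $\iota_{X_t}\om_t=-\alpha$ for a time-dependent holomorphic vector field $X_t$ vanishing on $M$. Its flow $\psi_t$ is defined on a neighbourhood of $M$ for all $t\in[0,1]$, fixes $M$ pointwise, and standard computation shows $\tfrac{d}{dt}\psi_t^*\om_t=\psi_t^*(L_{X_t}\om_t+\beta)=\psi_t^*(\d\iota_{X_t}\om_t+\beta)=0$, so $\psi_1^*\om=\om_0$.

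\medskip

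\noindent\textbf{Step 3: Conclude.} Composing $\psi_1$ with the tubular neighbourhood map of Step 1 produces a biholomorphism from a neighbourhood $V$ of $p\in S$ to a neighbourhood $U$ of the zero section of $T^*M$, identifying $\om$ with $\om_0$ and $M$ with the zero section, as required.

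\medskip

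\noindent\textbf{Main obstacle.} The only genuinely non-formal input is the holomorphic relative Poincar\'e lemma for a closed $2$-form vanishing along $M$: the standard smooth proof uses the fibrewise scaling homotopy, and one must check that the resulting homotopy operator preserves holomorphicity, which it does because the $t$-integral commutes with $\bar\partial$ and the integrand is holomorphic in the spatial variables for each fixed $t$. The rest is the usual Moser argument, taking place entirely within the holomorphic category since every vector field and form involved is holomorphic.
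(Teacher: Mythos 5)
Your argument is the holomorphic version of Weinstein's Lagrangian neighbourhood theorem (tubular neighbourhood $+$ relative Poincar\'e lemma $+$ Moser), and it does work in the holomorphic category for essentially the reasons you state: the homotopy operator is an integral in a real parameter of holomorphic forms, and the time-one flow of a holomorphic time-dependent vector field is a biholomorphism on a suitably shrunk neighbourhood. However, there is one real gap in Step 1. Requiring the differential of the tubular map along $M$ to induce $\Phi$ only pins down the \emph{mixed} terms $\om(\partial_{y_i},\partial_{x_j})\vert_{y=0}=\de_{ij}$; it does \emph{not} force $\om(\partial_{y_i},\partial_{y_j})\vert_{y=0}=0$. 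Without this, $\om$ and $\om_0$ need not agree as elements of $\La^2T^*S\vert_M$, $\be=\om-\om_0$ need not vanish on $M$, and the linear path $\om_t$ may degenerate. To close the gap you must also arrange that the fibres $\{x=\mathrm{const}\}$ are Lagrangian along $M$ (equivalently, choose the $y_i$ so the vertical complement of $T_pM$ is Lagrangian at each $p\in M$), or perform a further coordinate change of the shape $\ti x_i=x_i+\sum_j d_{ij}(x)y_j$ with $d$ antisymmetric to kill the $\d y\wedge\d y$ component. Locally near a point this is elementary linear algebra done holomorphically, but it is a separate condition from $\Phi$-compatibility and must be stated.

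Beyond that, your route differs from the paper's. The paper states Theorem \ref{lag_ngb} as a classical fact (no proof is given), but the surrounding material --- Theorem \ref{darboux} and the discussion through Lemma 2.18 --- makes clear that the intended mechanism for the purely local statement is the complex Darboux theorem plus a fibre translation by an exact $1$-form: choose Darboux coordinates, make $T_pM=\{y=0\}$ by a linear symplectic change, write $M=\Ga_{\d g}$, and translate by $(x,y)\mapsto(x,y-\d g(x))$, which is a symplectomorphism carrying $M$ to the zero section. That argument is shorter and entirely avoids Moser once Darboux is granted. Your Moser proof is heavier machinery for the local statement, but it has the virtue of proving the semi-global version (a full neighbourhood of $M$, not just of a point) with no extra work, provided a holomorphic Lagrangian complement to $TM$ in $TS\vert_M$ can be chosen globally along $M$; that global existence is an additional hypothesis which is automatic in the local setting.
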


Note that $(S,\omega)$ need not be isomorphic to $T^*M$ in a neighbourhood of $M,$
but just in a neighbourhood of a point $p\in M.$ 

\smallskip

So, we may assume that one of these $1$-forms is the zero section of $T^*N$, 
hence identify locally $M$ with $N.$ 
By making $L$ smaller if necessary, we may assume that the closed $1$-form defined by $L$ is exact, and $L$ is the graph of the $1$-form $\d f,$ for a holomorphic function $f$ defined locally on some open submanifold $M'\subset M,$ as the following lemma states:

\begin{lem} 
Choose locally near a point $p\in S$ in the complex analytic topology Darboux coordinates $(x_1,\ldots ,x_n,y_1,\ldots,y_n)$ and compatible coordinates $x_i$ on $E$ such that $\pi:S\ra E$ can be identified with the projection  $(x_1,\ldots ,x_n,y_1,\ldots,y_n) \to (x_1,\ldots,x_n).$
Now, given a polarization $(x_1,\ldots ,x_n,y_1,\ldots,y_n) \ra (x_1,\ldots,x_n)$ defining local coordinates on $L,$
then $L$ is given by $$\Bigl\{\Bigl(x_1,\ldots ,x_n, \frac{\d f}{\d x_1},\ldots, \frac{\d f}{\d x_n}\Bigr):\; x_1,\ldots ,x_n \in M'\Bigr\}$$ for a holomorphic function $f(x_1,\ldots, x_n)$ defined locally on an open $M'\subset M\subset S,$ where $M$ is the Lagrangian submanifold identified with the zero section, and the polarization $\pi$ with projection $T^*M \ra M$. 
\end{lem}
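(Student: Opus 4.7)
The strategy is to translate the conditions on $L$ into conditions on a $1$-form on $M$ and then apply the holomorphic Poincaré lemma.

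First, I will use transversality. Since $L$ is transverse to the polarization $\pi : T^*M \to M$, at any point $p\in L \cap M'$ the differential $\d\pi|_L : T_p L \to T_{\pi(p)} M$ is an isomorphism, by dimension count ($\dim L = \dim M = n$ and $T_pL$ meets the vertical tangent space of $\pi$ only at the origin). So, shrinking if necessary, $\pi|_L : L \to M'$ is a local biholomorphism onto an open $M'\subseteq M$, and hence $L$ is the graph of a unique holomorphic section $\alpha : M' \to T^*M$, i.e., a holomorphic $1$-form $\alpha = \sum_{i=1}^n \alpha_i(x)\,\d x_i$ on $M'$. In the chosen Darboux coordinates this means $L = \{(x_1,\ldots,x_n,\alpha_1(x),\ldots,\alpha_n(x)) : x\in M'\}$.

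Next, I will translate the Lagrangian condition. Recall that the canonical symplectic form is $\omega = -\d\lambda$ for the Liouville form $\lambda = \sum_i y_i\,\d x_i$. A direct computation shows that the pullback of $\lambda$ to $L$ along the section $\alpha$ equals $\alpha$ itself (viewed as a $1$-form on $M'$): indeed $\alpha^*(\sum y_i\,\d x_i) = \sum \alpha_i(x)\,\d x_i = \alpha$. Hence $\alpha^*\omega = -\d\alpha$, so $L$ Lagrangian is equivalent to $\d\alpha = 0$, i.e., $\alpha$ is a closed holomorphic $1$-form on $M'$.

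Finally, I apply the holomorphic Poincaré lemma. After further shrinking $M'$ to a polydisc (or any Stein contractible neighbourhood of the chosen base point), the closed holomorphic $1$-form $\alpha$ admits a holomorphic primitive: there exists a holomorphic function $f(x_1,\ldots,x_n)$ on $M'$ with $\alpha = \d f$, i.e.\ $\alpha_i = \partial f/\partial x_i$ for $i=1,\ldots,n$. Substituting into the graph description of $L$ yields exactly the formula
\[
L = \Bigl\{\Bigl(x_1,\ldots,x_n,\tfrac{\partial f}{\partial x_1},\ldots,\tfrac{\partial f}{\partial x_n}\Bigr) : (x_1,\ldots,x_n) \in M'\Bigr\},
\]
as claimed. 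The only real content is the standard transversality-plus-Lagrangian-plus-Poincaré argument; there is no serious obstacle, only the bookkeeping of shrinking the neighbourhood twice (once to get a global section, once to get a primitive).
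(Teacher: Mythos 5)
Your argument is correct and is precisely the one the paper sketches in the paragraphs preceding the lemma: transversality of $L$ to the polarization makes $L$ the graph of a holomorphic $1$-form on $M'$, the Lagrangian condition (via the Liouville primitive $\alpha = \sum y_i\,\d x_i$ of $\omega$) forces that $1$-form to be closed, and the holomorphic Poincar\'e lemma after shrinking $M'$ gives a local primitive $f$. The paper does not spell out these three steps as a formal proof, and your write-up fills them in faithfully.
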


So, in conclusion, if $\pi : S' \ra E$ is a polarization, and $M$ a Lagrangian submanifold with $\pi : M \ra E$ transverse near $x$ in $M,$ then locally there is a unique isomorphism $S' \cong T^*M$ identifying $M$ with zero section and $\pi$ with projection $T^*M \ra M$. Then any other Lagrangian $L$ in $S$ transverse to $\pi$ is locally described by the graph of $\d f$, for a holomorphic function $f$ locally defined on $M.$ It is now straightforward to deduce that, as $M$ is graph of $0$, and $L$ is graph of $\d f,$ then the intersection $X=L\cap M$ is the critical locus $(\d f)^{-1}(0).$

\medskip

We can summarize in this way. 
Let $(S,\om)$ be a complex symplectic manifold, and $L,M\subseteq S$ be Lagrangian
submanifolds. Let $X=L\cap M$ be the intersection as a complex analytic space. Then $X$
is complex analytically locally modeled on the zero locus of the $1$-form $\d f,$ that is 
on the critical locus $\Crit(f:U\ra\C)$ for a holomorphic 
function $f$ on a smooth manifold $U.$
 So, $X$ carries a natural perverse sheaf of vanishing cycles $\PV^\bu_{U,f}$ in the notation of \S\ref{s2.1},
and a natural problem to investigate is the following. Given open ${R}_i,{R}_j\subseteq S$ with isomorphisms ${R}_i\cong\Crit(f_i)$, ${R}_j\cong\Crit(f_j)$ for holomorphic
$f_i:U_i\ra\C$ and $f_j:U_j\ra\C$, we have to understand whether the
perverse sheaves $\cP^\bu_{{R}_i}=\PV_{U_i,f_i}^\bu$ on ${R}_i$ and $\cP^\bu_{{R}_j}=\PV_{U_j,f_j}^\bu$ on ${R}_j$
are isomorphic over ${R}_i\cap{R}_j$, and if so, whether
the isomorphism is canonical, for only then can we hope to glue the
$\cP^\bu_{{R}_i}$ for $i\in I$ to make $\cP^\bu_{L,M}$. 

\medskip

We will develop this program in \S\ref{s3}.

\clearpage

\section{Constructing canonical global perverse sheaves on Lagrangian intersections}
\label{s3}

We can state our main result.

\begin{thm} Let\/ $(S,\om)$ be a complex symplectic manifold and\/
$L,M$ oriented complex Lagrangian submanifolds in $S,$ and write $X=L\cap M,$
as a complex analytic subspace of\/ $S$. Then we may define
$P_{L,M}^\bu\in\Perv(X),$ uniquely up to canonical isomorphism, and
isomorphisms 
\e
\Si_{L,M}:P_{L,M}^\bu\ra \bD_X(P_{L,M}^\bu),
\qquad
\Tau_{L,M}:P_{L,M}^\bu\ra P_{L,M}^\bu,
\label{sm6eq5}
\e
respectively the Verdier duality and the monodromy
isomorphisms. These $P_{L,M}^\bu\in\Perv(X),\Si_{L,M},\Tau_{L,M}$ are locally characterized 
by the following property.

\smallskip

Given a choice of local Darboux coordinates $(x_1,\ldots,x_n,y_1,\ldots,y_n)$ 
in the sense of Definition \ref{basic} such that
$L$ is locally identified in coordinates with the graph $\Gamma_{\d f(x_1,\ldots,x_n)}$ of $\d f$ for $f$ a holomorphic function defined locally on an open $U\subset \C^n$, and $M$  is locally identified in coordinates with the graph  $\Gamma_{\d g(x_1,\ldots,x_n)}$ of $\d g$ for $g$ 
 a holomorphic function defined locally on $U$, and
the orientations $K_L^{1/2},K_M^{1/2}$ are the trivial square roots of
$K_L \cong \langle \d x_1\wedge \cdots\wedge\d x_n\rangle \cong K_M$, 
then there is a canonical isomorphism 
$P_{L,M}^\bu\cong \PV^\bu_{U,g-f},$ where $\PV^\bu_{U,g-f}$
is the perverse sheaf of vanishing cycles of $g-f,$ and $\Sigma_{L,M}$ and $\Tau_{L,M}$ are respectively the Verdier duality $\sigma_{U,g-f}$ and the monodromy $\tau_{U,g-f}$
introduced in \S\ref{s2}.

\smallskip

The same applies for $\cD$-modules and mixed Hodge modules on\/~$X$.
\label{s1thm1}
\end{thm}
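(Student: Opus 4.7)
\medskip

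\noindent\textbf{Proof proposal.} The plan is to build $P_{L,M}^\bu$ by gluing local data attached to an open cover of $S$ by polarisation charts, and then applying the stack property of perverse sheaves, Theorem \ref{sm2thm3}. The Darboux Theorem \ref{darboux}, together with a genericity argument, allows me to cover $S$ by opens $S_i$ equipped with polarisations $\pi_i:S_i\ra E_i$ transverse to both $L$ and $M$. By Theorem \ref{lag_ngb} and the discussion following Definition \ref{polar}, on each chart I can write $L\cap S_i=\Gamma_{\d f_i}$ and $M\cap S_i=\Gamma_{\d g_i}$ for holomorphic $f_i,g_i:U_i\ra \C$ with $U_i\subset\C^n$, so that $X\cap S_i\cong\Crit(g_i-f_i)$. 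Attached to this chart I have the perverse sheaf $\PV^\bu_{U_i,\,g_i-f_i}$ and the canonical $\sigma_{U_i,\,g_i-f_i}$ and $\tau_{U_i,\,g_i-f_i}$ from \S\ref{s2.1}. The polarisation trivialises both $K_L\vert_{L\cap S_i}$ and $K_M\vert_{M\cap S_i}$ by pullback of $\d x_1\w\cdots\w \d x_n$, and I let $Q_i\ra X\cap S_i$ be the principal $\Z_2$-bundle whose sections record the sign by which the chosen orientations $K_L^{1/2},K_M^{1/2}$ match these trivialisations. Set $\cP^\bu_i:=\PV^\bu_{U_i,\,g_i-f_i}\ot_{\Z_2}Q_i$.

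On an overlap $S_i\cap S_j$ where the two polarisations are mutually transverse, a local symplectomorphism identifies the two critical-locus descriptions, and after possibly stabilising, presents one as a closed embedding of the other of the type handled in Theorem \ref{sm5thm1}; then Theorem \ref{sm5thm2}(a) supplies a canonical isomorphism of perverse sheaves twisted by the principal $\Z_2$-bundle $P_{\Phi_{ij}}$ of Definition \ref{sm5def}. The key computation is that under the orientations $K_L^{1/2},K_M^{1/2}$ the discrepancy bundle $P_{\Phi_{ij}}$ is canonically identified with $Q_i\ot_{\Z_2}Q_j$, so the twist is absorbed and yields an isomorphism $\Psi_{ij}:\cP^\bu_i\vert_{S_i\cap S_j}\ra\cP^\bu_j\vert_{S_i\cap S_j}$. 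Diagrams \eqref{sm5eq14} and \eqref{sm5eq15} guarantee that $\Psi_{ij}$ is compatible with the locally defined Verdier duality and monodromy.

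The principal obstacle, corresponding to step (iii) of the introduction, is the triple-overlap cocycle condition: three polarisations need not be pairwise transverse, so I cannot directly compose the $\Psi_{ij}$. The approach will be to introduce, on $S_i\cap S_j\cap S_k$, an auxiliary polarisation $\pi_l$ transverse to each of $\pi_i,\pi_j,\pi_k$ and to $L,M$, and to check the identity $\Psi_{ij}\ci\Psi_{jk}=\Psi_{il}\ci\Psi_{lk}$ of the introduction on $S_i\cap S_j\cap S_k\cap S_l$. This identity should be forced by the associativity \eqref{sm5eq16} of Theorem \ref{sm5thm2}(c) applied to triple compositions of embeddings, combined with Theorem \ref{sm5thm2}(b) and Lemma \ref{sm5lem}, which say that $\Theta_\Phi$ and $P_\Phi$ depend only on $\Phi\vert_X$. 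The hard technical part is the careful book-keeping of the quadratic forms $q_{UV}$ and their splittings \eqref{sm5eq8} under iterated stabilisation, together with verifying that the canonical identification $P_{\Phi_{ij}}\cong Q_i\ot_{\Z_2}Q_j$ is itself compatible with triple compositions.

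Once this cocycle condition is established, Theorem \ref{sm2thm3}(ii) delivers $P^\bu_{L,M}\in\Perv(X)$ together with isomorphisms $P^\bu_{L,M}\vert_{X\cap S_i}\cong\cP^\bu_i$, unique up to canonical isomorphism; and Theorem \ref{sm2thm3}(i) glues the local $\sigma_{U_i,\,g_i-f_i}$ and $\tau_{U_i,\,g_i-f_i}$ to the $\Sigma_{L,M}$ and $\Tau_{L,M}$ of \eqref{sm6eq5}. The characterising property is built into the construction: in a Darboux chart in which $L=\Gamma_{\d f}$, $M=\Gamma_{\d g}$ and the orientations agree with the trivial square roots of $\langle\d x_1\w\cdots\w \d x_n\rangle$, $Q_i$ is canonically trivial, so $P^\bu_{L,M}\cong\PV^\bu_{U,\,g-f}$ and $\Sigma_{L,M},\Tau_{L,M}$ restrict to $\sigma_{U,\,g-f},\tau_{U,\,g-f}$. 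The $\cD$-module and mixed Hodge module versions follow by running exactly the same gluing in those categories via the Riemann--Hilbert correspondence, as noted at the end of \S\ref{s2.1}.
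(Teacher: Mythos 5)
Your overall strategy — cover $S$ by polarization charts, attach the perverse sheaves $\PV^\bu_{U_i,g_i-f_i}$ twisted by $\Z_2$-bundles $Q_i$, compare on double overlaps via Theorem \ref{sm5thm2}, verify a cocycle identity by introducing auxiliary polarizations, and glue via the stack property (Theorem \ref{sm2thm3}) — is exactly the architecture the paper uses. The plan for the characterization property, the orientation bookkeeping, and the passage to $\cD$-modules and mixed Hodge modules via Riemann--Hilbert are all as in the paper. However, the proposal misidentifies, and in one place elides entirely, the two technical lemmas that carry the actual weight.

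First, on a double overlap the two critical charts $(U_i,g_i-f_i)$ and $(U_j,g_j-f_j)$ do not ``present one as a closed embedding of the other'': neither embeds in the other. What the paper constructs (Proposition \ref{local}) is a third, larger chart into which both embed, and this is a genuine geometric construction — namely an $LM$-chart coming from the doubled symplectic manifold $S\t\bar S$ with Lagrangians $L\t M$ and $\Delta_S$, polarized by $\pi_i\t\pi_j$. The function $h_{12}$ on an open $W_{12}\subset L\t M$, and the two embeddings $\Phi_{12},\Psi_{12}$ with $h_{12}\ci\Phi_{12}=f$, $h_{12}\ci\Psi_{12}=g$, are what license the application of Theorem \ref{sm5thm2}(a). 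Your phrase ``a local symplectomorphism identifies the two critical-locus descriptions, and after possibly stabilising'' glosses over this: the common stabilization does not exist for free, it is produced by the product construction. Second, and more seriously, the four-polarization identity $\Psi_{ij}\ci\Psi_{jk}=\Psi_{il}\ci\Psi_{lk}$ is not forced by the formal associativity \eqref{sm5eq16} together with Lemma \ref{sm5lem}, as you claim. Those tools only compare embeddings that are already arranged in a commutative tower. The paper's Proposition \ref{main2} supplies precisely such a tower by constructing a holomorphic $F:Z\ra\C$ on an open $Z\subset U_1\t V_2\t U_3\t V_4$, using $S\t\bar S\t S\t\bar S$ with Lagrangians $L\t\Delta_S\t M$ and $\Delta_S\t\Delta_S$, and by verifying the large commutative diagram \eqref{main2.1} of embeddings into $Z$. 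Only then does Theorem \ref{sm5thm2}(c) applied up and down that diagram yield \eqref{main2.3}, which is the independence of the auxiliary polarization. So the ``hard technical part'' is not the bookkeeping of the quadratic forms $q_{UV}$, which Theorem \ref{sm5thm1} already packages; it is the existence of the common higher chart $Z$ and the commutativity of \eqref{main2.1}, which require the iterated-product symplectic geometry that your proposal does not mention.
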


A convenient way to express this is in terms of {\it charts}, by which we mean a set of data locally defined by the choice of a polarization $\pi: S \to E$. Charts are analogous to {\it critical charts} defined by \cite[\S 2]{Joyc1}, as in \S\ref{s4.1}. 
We will show in \S\ref{s4.2} that they are actually critical charts and they define the structure of a 
d-critical locus on the Lagrangian intersection, but for this section we will not use it.

\smallskip

We explained in \S\ref{s2.3}
that the local choice of a polarization on $(S,\om)$ yields a local description 
of the Lagrangian intersection as a critical locus $P \cong\Crit(f)$ for 
a closed embedding $i:P\hookra U$, $P$ open in $X,$ and  
a holomorphic function 
$f:U\subset L \ra \C,$ where $U$ is an open submanifold of $L.$ We have a local symplectic identification $S\cong T^*U \subseteq T^*L,$ which identifies $L\subset S$ with the zero section in $T^*L,$ and $M\subset S$ with the graph $\Ga_{\d f}$ of $\d f,$ and $\pi : S \ra E\cong L$ with the projection $T^*L\ra L.$ So, for each polarization $\pi: S \to E$
we have naturally induced a set of data $(P,U,f,i),$ which we will call an $L$-{\it chart}. 
We will also consider $M$-{\it charts}, namely charts coming from polarizations that identify the other Lagrangian $M$ with the zero section,
that is, charts like $(Q,V,g,j)$ where $Q \cong\Crit(g)$ for 
a closed embedding $j:Q\hookra V$, $Q$ open in $X,$ and  
a holomorphic function 
$g:V\subset M \ra \C,$ 
where $V$ is an open submanifold of $M.$ We have a local symplectic identification $S\cong T^*V \subseteq T^*M,$ which identifies $M\subset S$ with the zero section in $T^*M,$ and $L\subset S$ with the graph $\Ga_{\d g}$ of $\d g,$ and $\pi : S \ra E\cong M$ with the projection $T^*M\ra M.$ We will use also $LM$-{\it charts}.

\smallskip

Using this general technique, let us fix the following notation we will use for the rest of the paper. 
We will consider mainly three kinds of {\it charts}, where by charts we basically mean a set of data associated to a choice of one or two polarizations for our symplectic complex manifold $(S,\om)$:

\begin{enumerate}

\item[$(a)$] $L$-{\it charts} $(P,U,f,i)$ are induced by a polarizations $\pi : S' \ra E$ transverse to $L,M$ with $S'\subset S$ open,  $P\subset X$ open, and $U\subset L$ open, and $f : U \ra \C$ holomorphic, and $i : P \hookra U \subset L$ the inclusion, with $i: P\ra \Crit(f)$ an isomorphism, so that we have local identifications
\begin{itemize}
\item $(S,\om) = T^*U;$
\item $L=$ zero section; 
\item $M = \Gamma_{\d f};$
\item $E = L = U;$
\item $\pi : S \ra E$ with $\pi : T^*U \ra U.$
\end{itemize}

\item[$(b)$]  $M$-{\it charts} $(Q,V,g,j)$ are induced by a polarization $\tilde\pi : \tilde S \ra F$ transverse to $L,M,$ with $\tilde S\subset S$ open, $Q\subset X$ open, and $V\subset M$ open, and $g : V \ra\C$ holomorphic, and $j : Q \hookra V \subset M$ the inclusion, with $j: Q\ra \Crit(g)$ an isomorphism, so that we have local identifications
\begin{itemize}
\item $(S,-\om) = T^*V;$ 
\item $M=$ zero section; 
\item $L = \Gamma_{\d g};$  
\item $F = M = V;$
\item $\tilde\pi : S \ra F$ with $\pi : T^*V \ra V.$
\end{itemize}

\item[$(c)$]  $LM$-{\it charts} $(R,W,h,k)$ are induced by polarizations $\pi : S' \ra E,$ $\tilde\pi : \tilde S \ra F$ transverse to $L,M$ and to each other on $\hat S = S' \cap \tilde S.$ We have $W \subset L \t M$ open, $h : W \ra \C$ holomorphic, $k : R \ra W \subset L \t M$ the diagonal map, 
with $k: R\ra \Crit(h)$ an isomorphism,
and local identifications
\begin{itemize}
\item $(S',\om) \t (\tilde S,-\om) = T^* W;$ 
\item $L \t M =$ zero section;
\item diagonal $\De_S = \Gamma_{\d h};$
\item $E \t F = L \t M = W;$
\item $\pi \t \tilde \pi : \hat S \ra E \t F$ with $\pi : T^*W \ra W.$
\end{itemize}

Note that $R = P\cap Q.$ These three kinds of charts will be related by Proposition \ref{local}, which will give an embedding from an open subset of an $L$-chart $(P,U,f,i)$ into an $LM$-chart $(R,W,h,k),$ and similarly
an embedding from an open subset of an $M$-chart $(Q,V,g,j)$ into $(R,W,h,k).$

\end{enumerate}

\smallskip



Moreover, we will explain in \S\ref{s3.1} that the choice of a polarization $\pi:S\ra E$
naturally induces 
local biholomorphisms $L{\buildrel\cong\over\longra} M$ or $M{\buildrel\cong\over\longra} L$, and thus isomorphisms 
\e
\Theta: K_L\vert_X{\buildrel\cong\over\longra} K_M\vert_X \quad {\rm or} \quad \Xi: K_M\vert_X{\buildrel\cong\over\longra} K_L\vert_X
\label{isocanbund.1}
\e
between the canonical bundles of the Lagrangian submanifolds.
We define $\pi_{P,U,f,i}:Q_{P,U,f,i}\ra P$ to be the principal\/
$\Z_2$-bundle parametrizing local isomorphisms
\e
\vartheta: K^{1/2}_L\vert_X{\buildrel\cong\over\longra} K^{1/2}_M\vert_X \quad {\rm or} \quad \xi: K^{1/2}_M\vert_X{\buildrel\cong\over\longra} K^{1/2}_L\vert_X
\label{isocanbund}
\e
such that $\vartheta\ot \vartheta=\Theta$ or $\xi\ot \xi=\Xi.$


\smallskip

Also, on each $L$-chart, $M$-chart, $LM$-chart, 
we have a natural perverse sheaf of vanishing cycles associated to the local description of the Lagrangian intersection as a critical locus. So we 
get a perverse sheaf of vanishing cycles $i^*(\PV^\bu_{U,f})$ on $P,$ $j^*(\PV^\bu_{V,g})$ on $Q,$ and $k^*(\PV^\bu_{W,h})$ on $R.$ 
These perverse sheaves together with
principal $\Z_2$-bundles parametrizing square roots 
of isomorphisms \eq{isocanbund.1} are the objects we want to glue.

\medskip

Then $P_{L,M}^\bu\in\Perv(X)$ is characterized by the following properties:
\begin{itemize}
\setlength{\itemsep}{0pt}
\setlength{\parsep}{0pt}
\item[{\bf(i)}] If\/ $(P,U,f,i)$ is a 
an $L$-chart, $M$-chart, or $LM$-chart, there is a natural isomorphism
\e
\om_{P,U,f,i}:P_{L,M}^\bu\vert_P\longra
i^*\bigl(\PV_{U,f}^\bu\bigr)\ot_{\Z_2} Q_{P,U,f,i},
\label{sm6eq6}.
\e
Furthermore the following commute in $\Perv(P)\!:$
\ea
&\begin{gathered} \xymatrix@!0@C=280pt@R=55pt{
*+[r]{P_{L,M}^\bu\vert_P} \ar[d]^{\Si_{L,M}\vert_P}
\ar[r]_(0.35){\om_{P,U,f,i}} &
*+[l]{i^*\bigl(\PV_{U,f}^\bu\bigr)\ot_{\Z_2} Q_{P,U,f,i}}
\ar[d]_{i^*(\si_{U,f})\ot\id_{Q_{P,U,f,i}}} \\
*+[r]{\bD_P\bigl(P_{L,M}^\bu\vert_P\bigr)} &
*+[l]{\begin{aligned}[b]
\ts i^*\bigl(\bD_{\Crit(f)}(\PV_{U,f}^\bu)\bigr)\ot_{\Z_2}
Q_{P,U,f,i}&\\
\ts \cong\bD_P\bigl(i^*(\PV_{U,f}^\bu)\ot_{\Z_2}
Q_{P,U,f,i}\bigr)&,\end{aligned}}
\ar[l]_(0.7){\bD_P(\om_{P,U,f,i})}}
\end{gathered}
\label{sm6eq7}\\
&\begin{gathered} \xymatrix@!0@C=280pt@R=55pt{
*+[r]{P_{L,M}^\bu\vert_P} \ar[d]^{\Tau_{L,M}\vert_P}
\ar[r]_(0.35){\om_{P,U,f,i}} &
*+[l]{i^*\bigl(\PV_{U,f}^\bu\bigr)\ot_{\Z_2} Q_{P,U,f,i}}
\ar[d]_{i^*(\tau_{U,f})\ot\id_{Q_{P,U,f,i}}} \\
*+[r]{P_{L,M}^\bu\vert_R} \ar[r]^(0.35){\om_{P,U,f,i}} &
*+[l]{i^*\bigl(\PV_{U,f}^\bu\bigr)\ot_{\Z_2} Q_{P,U,f,i}.} }
\end{gathered}
\label{sm6eq8}
\ea

\item[{\bf(ii)}] 
Let $\pi:S'\ra E$ and $\ti\pi:\ti S\ra F$ be polarizations transverse to $L,M,$ and transverse to each other on $S'\cap \ti S$. Then from $\pi$ we get an $L$-chart $(P,U,f,i),$ from $\ti\pi$ we get an $M$-chart $(Q,V,g,j),$ and from $\pi$ and $\ti\pi$ together we get an $LM$-chart $(R,W,h,k).$
Write $(P',U',f',i')$ for the $L$-chart determined by $\pi\vert_{S'\cap\ti S}: S'\cap\ti S\ra E,$ and $(Q',V',g',i')$ for the $M$-chart determined by $\ti\pi\vert_{S'\cap\ti S}: S'\cap\ti S\ra F.$ Then $P'\subseteq P,$ $U'\subseteq U$ are open and $f'=f\vert_{U'},$ $i'=i\vert_{P'},$ so $(P',U',f',i')$ is a {\it subchart\/} of $(P,U,f,i)$ in the sense of \S\ref{s4.1}. We write this as $(P',U',f',i')\subseteq (P,U,f,i).$ Similarly, $(Q',V',g',j')\subseteq (Q,V,g,j).$ Also $P'=Q'=R = X\cap S'\cap \ti S.$

\smallskip

In this situation, Proposition \ref{local} will show that there exist closed embeddings $\Phi : U' \hookra W$ and $\Psi : V' \hookra W$ such that 
so that $h\ci \Phi=f:U'\ra \C$ and $h\ci \Psi=g:V'\ra \C.$
Moreover $\Crit(f)\cong\Crit(h)\cong\Crit(g)$ as complex analytic spaces, and $f,h$ and $g,h$
are pairs of {\it stably equivalent} functions, as explained in \S\ref{s2.2}.
Inspired by \cite[Def. 2.18]{Joyc1}, we will say that $\Phi:(P',U',f',i')\hookra (R,W,h,k)$ is an
embedding of charts if $\Phi$ is a locally closed embedding $U'\hookra W$
of complex manifolds such that $\Phi\ci i' = k\vert_{P'} : P' \ra W$ and $f = h \ci \Phi : U' \ra \C$. 
As a shorthand we write $\Phi : (P', U', f', i') \hookra (R,W,h,k)$ to mean $\Phi$ is an embedding of $(P',U',f',i')$ in $(R,W,h,k).$ 
In brief, Proposition \ref{local} in \S\ref{s3.1} will define two embeddings of charts
$\Phi:(P',U',f',i')\hookra (R,W,h,k)$ and $\Psi:(Q',V',g',j')\hookra (R,W,h,k).$

\smallskip

Given the embedding of charts $\Phi:(P',U',f',i')\hookra (R,W,h,k),$ there is a natural
isomorphism of principal\/ $\Z_2$-bundles
\e
\La_\Phi:Q_{R,W,h,k}\vert_{P'}\,{\buildrel\cong\over\longra}\,
i^*(P_\Phi)\ot_{\Z_2} Q_{P',U',f',i'}
\label{sm6eq9}
\e
on $P',$ for  
$P_\Phi$ 
defined as follows: local isomorphisms
\e
\begin{split}
\al:K_{X}^{1/2}\vert_{P'^\red} & \ra
i^*(K_{U'})\vert_{P'^\red},\;
\be:K_{X}^{1/2}\vert_{P'^\red}\ra j'^*(K_W)\vert_{P'^\red},\; \\ &
\ga:i'^*(K_{U'})\vert_{P'^\red}\ra
j^*(K_W)\vert_{P'^\red}
\label{albega}
\end{split}
\e
with $\al\ot\al=\io_{P',U',f',i'},$
$\be\ot\be=\io_{R,W,h,k}\vert_{P'^\red},$
$\ga\ot\ga=i\vert_{P'^\red}^*(J_\Phi)$ correspond to local
sections $s_\al:P'\ra Q_{P',U',f',i'},$ $s_\be:P'\ra
Q_{R,W,h,k}\vert_{P'},$ $s_\ga:P'\ra i'^*(P_\Phi)$,
for $J_{\Phi}$ as in Definition\/ {\rm\ref{sm5def},}
and for isomorphisms $\io_{R,W,h,k}:K_{X}\ra i^*(K^{\ot 2}_W)\vert_{P'^\red}$
induced by the polarization $E_1\t E_2.$

\smallskip

Then for each embedding of charts, the following diagram commutes in $\Perv(P'),$ for
$\Th_\Phi$ as in\/~{\rm\eq{sm5eq13}:}
\e
\begin{gathered}
\xymatrix@C=220pt@R=40pt{ *+[r]{P_{L,M}^\bu\vert_{P'}}
\ar[r]_(0.38){\om_{P',U',f',i'}} \ar[d]^(0.4){\om_{R,W,h,k}\vert_{P'}}
& *+[l]{i^*\bigl(\PV_{U',f'}^\bu\bigr)\ot_{\Z_2} Q_{P',U',f',i'}}
\ar[d]_(0.4){i'^*(\Th_\Phi)\ot\id_{Q_{P',U',f',i'}}} \\
*+[r]{\begin{subarray}{l}\ts j^*\bigl(\PV_{W,h}^\bu\bigr)\vert_{P'}
\ts \ot_{\Z_2}Q_{R,W,h,k}\vert_{P'}\end{subarray}}
\ar[r]^(0.38){\quad\quad\quad \id_{j^*(\PV_{W,h}^\bu)}\ot\La_\Phi} &
*+[l]{\begin{subarray}{l}\ts i^*\bigl(\Phi\vert_X^*(\PV_{W,h}^\bu)
\ot_{\Z_2}P_\Phi\bigr)
\ts\ot_{\Z_2} Q_{P',U',f',i'}.\end{subarray}} }
\end{gathered}
\label{sm6eq10}
\e
We will have an analogous commutative diagram induced by $\Psi$ on $\Perv(Q'):$
\e
\begin{gathered}
\xymatrix@C=220pt@R=40pt{ *+[r]{P_{L,M}^\bu\vert_{Q'}}
\ar[r]_(0.38){\om_{Q',V',g',j'}} \ar[d]^(0.4){\om_{R,W,h,k}\vert_{Q'}}
& *+[l]{i^*\bigl(\PV_{V,g}^\bu\bigr)\ot_{\Z_2} Q_{Q',V',g',j'}}
\ar[d]_(0.4){j'^*(\Th_\Psi)\ot\id_{Q_{Q',V',g',j'}}} \\
*+[r]{\begin{subarray}{l}\ts j'^*\bigl(\PV_{W,h}^\bu\bigr)\vert_{Q'}
\ts \ot_{\Z_2}Q_{R,W,h,k}\vert_{Q'}\end{subarray}}
\ar[r]^(0.38){\quad\quad\quad \id_{j'^*(\PV_{W,h}^\bu)}\ot\La_\Psi} &
*+[l]{\begin{subarray}{l}\ts i^*\bigl(\Psi\vert_X^*(\PV_{W,h}^\bu)
\ot_{\Z_2}P_\Psi\bigr)
\ts\ot_{\Z_2} Q_{Q',V',g',j'}.\end{subarray}} }
\end{gathered}
\label{sm6eq10.1}
\e

Using Theorem 1.13, we get isomorphisms
$$
\al: (i'^*(\PV^\bu_{U',f'})\ot Q_{P',U',f',i'})\vert_R \cong (k^*(\PV^\bu_{W,h})\ot Q_{R,W,h,k}),
$$
$$
\be: (j'^*(\PV^\bu_{V',g'})\ot Q_{Q',V',g',j'})\vert_R \cong (k^*(\PV^\bu_{W,h})\ot Q_{R,W,h,k}).
$$
Combining these, we get an isomorphism
\e
\be^{-1}\ci \al: (i'^*(\PV^\bu_{U',f'})\ot Q_{P',U',f',i'})\vert_R \cong (j'^*(\PV^\bu_{V',g'})\ot Q_{Q',V',g',j'})\vert_R,
\label{LchMch}
\e
that is, an isomorphism of perverse sheaves from $L$-charts and $M$-charts in $\Perv(P'\cap Q').$
Later, in \S\ref{s3.2} we will involve also two other polarizations for an associativity result. More precisely, following notation of \S\ref{s3.2}
we want that if we have two $L$-charts $(P_1,U_1,f_1,i_1)$ and $(P_3,U_3,f_3,i_3)$
and two $M$-charts $(Q_2,V_2,g_2,j_2)$ and $(Q_4,V_4,g_4,j_4)$ then
\e
\begin{split}
\al_{32}\vert_Y^{-1}\ci&\be_{32}\vert_Y\ci\be_{12}\vert_Y^{-1}\ci\al_{12}\vert_Y=\al_{34}\vert_Y^{-1}\ci\be_{34}\vert_Y\ci\be_{14}\vert_Y^{-1}\ci\al_{14}\vert_Y:\\
&\bigl(\PV_{U_1,f_1}^\bu \ot_{\Z_2} Q_{P_1,U_1,f_1,i_1}\bigr)\vert_{Y}\longra \bigl(\PV_{U_3,f_3}^\bu \ot_{\Z_2} Q_{P_3,U_3,f_3,i_3}\bigr)\vert_{Y}.
\end{split}
\label{asso}
\e
\end{itemize}

\smallskip

Theorem \ref{s1thm1} will be proved in \S\ref{s3.1}--\S\ref{s3.3}. In \S\ref{s3.3}, we will provide
a descent argument, which is the most technical part of the paper. We find useful to 
outline here our method of the proof.

\smallskip

Let $\{U_a\}_{a\in
I}$ be an analytic open cover for $X=L\cap M,$ 
induced by 
polarizations $\pi_a : S_a \ra E_a$ for $a\in I$, transverse to both $L$ and $M$,
and write
$U_{ab}=U_a \cap U_b$ for $a,b\in I$.
Similarly, write $U_{abc}=U_a \cap U_b \cap U_c$ for $a,b,c\in
I$. Define $\cP_a$ to be $i_a^*(\PV^\bu_{U_a,f_a})\ot_{\Z_2} Q_{P_a,U_a,f_a,i_a}$ from the discussion above, and isomorphisms
$\ga_{ab}:\cP^\bu_a \vert_{U_{ab}}\ra \cP^\bu_b\vert_{U_{ab}}$ in
$\Perv(U_{ab})$ for all\/ $a,b\in I$ with\/ $\be_{aa}=\id$ and\/
\begin{equation*}
\ga_{bc}\vert_{U_{abc}}\ci \ga_{ab}\vert_{U_{abc}}=
\ga_{ac}\vert_{U_{abc}}:\cP_a\vert_{U_{abc}}\longra
\cP_c\vert_{U_{abc}}
\end{equation*}
in $\Perv(U_{abc})$ for all\/ $a,b,c\in I$. 

\smallskip

The construction is
independent of the choice of $\{U_a\}_{a\in I}$ 
above.
Then by Theorem \ref{sm2thm3}, there exists\/
$\cP^\bu$ in $\Perv(X),$ unique up to canonical isomorphism, with
isomorphisms $\om_a: \cP^\bu \vert_{U_{a}} \ra\cP^\bu_a$ for each\/ $a\in I,$
satisfying $\ga_{ab}\ci\om_a\vert_{U_{ab}}=\om_b\vert_{U_{ab}}:
\cP^\bu\vert_{U_{ab}}\ra\cP^\bu_b\vert_{U_{ab}}$ for all\/~$a,b\in I$,
which concludes the proof of Theorem \ref{s1thm1}.
We will carry out this program in \S\ref{s3.1}--\S\ref{s3.3}.

\bigskip

Theorem \ref{s1thm1} resolves a long-standing question in the categorification of Lagrangian intersection number:
our perverse sheaf $P^\bu_{L,M}$ categorifies Lagrangian intersection numbers, in the sense that the constructible function 
$$p\ra \sum_i (-1)^{i-2n} \dim_\C \mathbb{H}^i_{\{ p \}}(X, P^\bu_{L,M}),$$
is equal to the well known Behrend function $\nu_X$ in \cite{Behr} by construction, using the expression of the Behrend function of a critical locus in terms of the Milnor fibre, as in \cite{Behr}, and so 
\e\chi(X,\nu_X)=\sum_i (-1)^{i-2n}\dim_A \mathbb{H}^i (X,P^\bu_{L,M}),
\label{categor}
\e
for a base ring $A$, which in this case is a field.
If the intersection $X$ is compact, then $[L]\cap [M]$ is given by \eq{categor}, where $[L],[M]$ are the homology classes of $L$ and $M$ in $S$.

\smallskip

Moreover, our construction may have exciting far reaching applications in symplectic geometry and topological field theory, as discussed in \S\ref{s5}.

\subsection{Canonical isomorphism of perverse sheaves on double overlaps}
\label{s3.1}

Given a complex symplectic manifold $(S,\om)$ and Lagrangian submanifolds $L,$ $M$ in $S,$ define $X$ to be their intersection as a complex analytic space.
Using results in \S\ref{s2.3}, locally in the complex analytic topology near a point $x\in X,$ we can choose an open set $S'\subset S$ and a polarization transverse to both $L$ and $M$ such that $S'\cong T^*L$ and $M\cong \Gamma_{\d f}$ so that $X=\Crit(f)$ for a holomorphic function $f$ defined on $U=L\cap S'.$ Thus we get a perverse sheaf of vanishing cycle $\PV^\bu_{U,f}.$ In this section we will investigate how two such local descriptions are related.

\smallskip

Consider $\pi_1,\pi_2:S_1,S_2\ra E_1,E_2$ two polarizations transverse to each other and both transverse to both $L$ and $M.$ Choose open neighbourhoods $U_1$ of $X\cap S_1$ in $L\cap S_1$ with $\pi_1(U_1)\subset\pi_1(M\cap S_1)$ and $V_2$ of $X\cap S_2$ in $M\cap S_2$ with $\pi_2(V_2)\subset\pi_2(L\cap S_2)$. Then we get respectively the local identifications
\begin{equation*}
\begin{split}
U_1\cong \pi_1(U_1)\subset & E_1,\; S_1\supset \pi_1^{-1}(\pi_1(U_1))\cong T^*U_1, \; L\cap \pi_1^{-1}(\pi_1(U_1)) \cong \Gamma_0,\; \\ &  M\cap \pi_1^{-1}(\pi_1(U_1)) \cong \Gamma_{\d f_1}, \; f_1:U_1\ra\C,
\end{split}
\end{equation*}
\begin{equation*}
\begin{split}
V_1\cong \pi_2(V_2)\subset & E_2,\; \bar S_2\supset \pi_2^{-1}(\pi_2(V_2))\cong T^*V_2, \; M\cap \pi_2^{-1}(\pi_2(V_2)) \cong \Gamma_0,\; \\ & L\cap \pi_2^{-1}(\pi_2(V_2)) \cong \Gamma_{\d g_2}, \; g_2:V_2\ra\C.
\end{split}
\end{equation*}
Choose an open neighbourhood $W_{12}$ of $\{(x,x):x\in X\cap S_1\cap S_2\}$ in $U_1\t V_2$ with $(\pi_1\t\pi_2)(W_{12})\subset (\pi_1\t\pi_2)(\De_S\cap (S_1\t S_2))$. Choose open neighbourhoods $U_1'$ of $X\cap S_1\cap S_2$ in $U_1$ with $\{(l,\pi_2\vert_M^{-1}\ci\pi_2(l)):l\in U_1'\}\subset W_{12}$ and $V_2'$ of $X\cap S_1\cap S_2$ in $V_2$ with $\{(\pi_1\vert_L^{-1}\ci\pi_1(m)):m\in V_2'\}\subset W_{12}$. Then we have:

\begin{prop} In the situation above, starting from polarizations $\pi_1,\pi_2:S_1,S_2\ra E_1,E_2$ and defining $f_1:U_1\ra\C$ using $\pi_1$ and $g_2:V_2\ra\C$ using $\pi_2$ and using the given notation, there exists locally a holomorphic function $h_{12}:W_{12}\ra \C$ such that the following diagram
\e
\begin{gathered}
\xymatrix@R30pt@C110pt{U_1' \ar[r]^{\Phi_{12}=\id_{U_1'} \times \pi_1\vert_{V_2'}^{-1}} \ar[rd]^{f_1\vert_{U_1'}} & W_{12} \ar[d]^{h_{12}} & V_2' \ar[l]_{\Psi_{12}=\pi_2\vert_{U_1'}^{-1}\times \id_{V_2'}} \ar[ld]_{g_2\vert_{V_2'}} \\
& \C &}
\end{gathered}
\label{main1}
\e
is commutative, that is
\e
h_{12}(l,\pi_1\vert_{V_2'}^{-1}(l))=f_1(l),
\quad \textrm{and} \quad
h_{12}(\pi_2\vert_{U_1'}^{-1}(m),m)=g_2(m),
\label{main1.1}
\e
for every $l\in U_1',$ $m\in V_2'.$ Moreover, $\Phi_{12}=\id_{U_1'} \times \pi_1\vert_{V_2'}^{-1}$ and $\Psi_{12}=\pi_2\vert_{U_1'}^{-1}\times \id_{V_2'}$ induce isomorphisms $\Crit(h_{12})\cong\Crit(f_1)\cong\Crit(g_2)$ as complex analytic spaces locally in the complex analytic topology. In particular, from Theorem \ref{sm5thm1}, we can choose
$(z_1, \ldots, z_n)$ coordinates normal to $(\id_L\t \pi_1\vert_{V_2'}^{-1})(U_1')$ in $W_{12},$
and $(w_1, \ldots, w_n)$ coordinates normal to $(\pi_2\vert_{U_1'}^{-1}\t \id_{V_2'})(V_2')$ in $W_{12},$
under which we can write $h_{12}\cong f_1 \boxplus z_1^2 + \ldots + z_n^2$ and $h_{12}\cong g_2 \boxplus w_1^2 + \ldots +w_n^2$. Following \S\ref{s2.2}, we will say that $f_1$ and $g_2$ are both stably equivalent to $h_{12}$.
\label{local}
\end{prop}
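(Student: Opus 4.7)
The approach is to produce $h_{12}$ as the generating function of the Lagrangian $\Delta_S$ in the product complex symplectic manifold $(S \times S, \omega \boxplus (-\omega))$ under the product polarization $\pi_1 \times \pi_2$. Both $\Delta_S$ and $L \times M$ are complex Lagrangian submanifolds of this product, intersecting in the diagonal copy of $X$, and $\pi_1 \times \pi_2 : S_1 \times S_2 \to E_1 \times E_2$ is a Lagrangian fibration. The hypotheses on $\pi_1, \pi_2$ translate exactly into transversality of $\pi_1 \times \pi_2$ to $L \times M$ (from $\pi_i$ transverse to $L, M$) and to $\Delta_S$ (from mutual transversality of $\pi_1, \pi_2$), which is the key input.

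Granted this, I would apply the Lagrangian neighbourhood theorem (Theorem \ref{lag_ngb}) to $L \times M$ in $(S \times S, \omega \boxplus (-\omega))$, in the sharpened form recorded in \S\ref{s2.3}: a polarization transverse to a chosen Lagrangian yields a canonical local complex symplectic identification with the cotangent bundle of that Lagrangian under which the Lagrangian is the zero section and the polarization is the cotangent projection. This identifies a neighbourhood of $L \times M$ in $S_1 \times S_2$ with an open subset of $T^*W_{12}$, and $\Delta_S$, being transverse to the polarization, is locally the graph of a closed holomorphic 1-form on $W_{12}$; after shrinking $W_{12}$ to be simply connected, this 1-form is exact, producing $h_{12} : W_{12} \to \C$ with $\Delta_S = \Gamma_{dh_{12}}$, unique up to an additive constant.

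To check the commutativity \eq{main1.1} I would compare Liouville primitives. The $\pi_1$-identification $(S, \omega) \cong T^*L$ supplies a Liouville form $\theta^{(1)}$ on $S$ with $\theta^{(1)}|_L = 0$ and $\theta^{(1)}|_M = df_1$, and the $\pi_2$-identification $(S, -\omega) \cong T^*M$ supplies $\theta^{(2)}$ with $\theta^{(2)}|_M = 0$ and $\theta^{(2)}|_L = dg_2$. The three characterizing properties of a Liouville form (primitive of the symplectic form, vanishing on the zero section, vanishing on vertical vectors) force the Liouville form $\theta_{W_{12}}$ of $T^*W_{12}$ to coincide with $\theta^{(1)} \boxplus \theta^{(2)}$ under the identification. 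Since $\theta^{(1)} + \theta^{(2)}$ is closed on $S$ (the two terms being primitives of $\pm \omega$), it equals $d\chi$ locally for a holomorphic $\chi : S \to \C$ satisfying $\chi|_M = f_1 \circ \pi_1|_M + \mathrm{const}$ and $\chi|_L = g_2 \circ \pi_2|_L + \mathrm{const}$. Restricting $\theta_{W_{12}} = \theta^{(1)} \boxplus \theta^{(2)}$ to $\Delta_S$ and identifying $\Delta_S \cong S$ via $(s,s) \leftrightarrow s$ and $\Delta_S \cong W_{12}$ via $(w, dh_{12}(w)) \leftrightarrow w$ (with $w = (\pi_1|_L^{-1}(\pi_1(s)), \pi_2|_M^{-1}(\pi_2(s)))$) then yields $h_{12}(\pi_1|_L^{-1}(\pi_1(s)), \pi_2|_M^{-1}(\pi_2(s))) = \chi(s) + \mathrm{const}$. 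For $(l, m_l) = (l, \pi_1|_{V_2'}^{-1}(\pi_1(l)))$ the corresponding $s$ is $m_l \in M$, so $h_{12}(l, m_l) = \chi(m_l) + \mathrm{const} = f_1(l) + \mathrm{const}$, and symmetrically $h_{12}(\pi_2|_{U_1'}^{-1}(\pi_2(m)), m) = g_2(m) + \mathrm{const}$. The additive constants are fixed consistently using that $f_1 - g_2$ is locally constant on $X \subseteq \Crit(f_1) \cap \Crit(g_2)$.

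For the stable-equivalence statement, $\Phi_{12} : U_1' \hookrightarrow W_{12}$ and $\Psi_{12} : V_2' \hookrightarrow W_{12}$ are closed embeddings of complex manifolds of codimension $n$ with $h_{12} \circ \Phi_{12} = f_1$ and $h_{12} \circ \Psi_{12} = g_2$, and $\Crit(f_1) = X = \Crit(h_{12})$ locally. Theorem \ref{sm5thm1}(i)--(ii) then supplies nondegenerate quadratic forms on the respective normal bundles, which in the complex analytic setting may be diagonalized by a holomorphic change of coordinates to produce the normal forms $h_{12} \cong f_1 \boxplus (z_1^2 + \cdots + z_n^2)$ and $h_{12} \cong g_2 \boxplus (w_1^2 + \cdots + w_n^2)$. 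The main obstacle I foresee is Step 3: the identity $\theta_{W_{12}} = \theta^{(1)} \boxplus \theta^{(2)}$ requires the Lagrangian neighbourhood identification to be constructed genuinely compatibly with the pure $\pi_1$- and $\pi_2$-polarizations on the two factors, and the bookkeeping of the additive constants on each component of $X$ must be carried out with care to extract the sharp equalities~\eq{main1.1} rather than merely equalities up to constants.
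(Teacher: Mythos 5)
Your construction of $h_{12}$ is exactly the paper's: pass to $(S\times\bar S,\om\op(-\om))$, polarize by $\pi_1\times\pi_2$, identify $L\times M$ with the zero section of $T^*(L\times M)$, and read off $h_{12}$ as the generating function of the Lagrangian $\De_S=\Ga_{\d h_{12}}$. Where you diverge is in verifying the two triangles of \eq{main1}. The paper slices the product by the coisotropic submanifolds $P=S\times M$ and $Q=L\times S$, observes $\om\op(-\om)\vert_P=p_1^*\om$, and chases a pair of explicit diagrams \eq{eq1}--\eq{eq2} through the cotangent identifications to get $\d\bigl(h_{12}\ci\Phi_{12}\bigr)=\d f_1$ directly in coordinates. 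You instead work with Liouville primitives: you note that $\theta^{(1)}+\theta^{(2)}$ is closed (being a primitive of $\om$ plus one of $-\om$), write it locally as $\d\chi$, and identify $\theta_{W_{12}}$ under the product identification with $\theta^{(1)}\boxplus\theta^{(2)}$ by its characterizing properties (vanishes on the zero section, vanishes on vertical vectors, has exterior derivative $-(\om\op(-\om))$); restricting to $\De_S$ then gives $h_{12}=\chi+\textrm{const}$, and evaluating $\chi$ on $M$ and on $L$ recovers $f_1$ and $g_2$ up to constants. The obstacle you flag under ``Step 3'' is not actually a gap: the local identification $S'\cong T^*M$ induced by a transverse polarization is \emph{unique} once one demands that it send $M$ to the zero section and $\pi$ to the cotangent projection (the paper states this explicitly in \S\ref{s2.3}), so the product identification necessarily agrees with the pair of factor identifications and the equality $\theta_{W_{12}}=\theta^{(1)}\boxplus\theta^{(2)}$ follows. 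Both methods close the constant-ambiguity the same way via the normalizations $f_1\vert_{U_1\cap V_2}=0$, $h_{12}\vert_{N_1\cap N_2}=0$. Your route is cleaner conceptually and scales more transparently; the paper's slicing argument is more elementary and self-contained, avoiding the uniqueness discussion of the Lagrangian-neighbourhood identification. The final stable-equivalence claim is handled identically in both, by citing Theorem \ref{sm5thm1}(i).
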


\begin{proof}
Consider the product symplectic manifold $(S\times \bar{S}, \om \op (-\om)),$ where $\bar{S}$ denotes the symplectic manifold $S$ corresponding to the symplectic form with the opposite sign.
In $S\times \bar{S}$ consider the Lagrangian submanifolds $N_1:=L\times M$ and $N_2:=\Delta_S,$
the diagonal. As explained in \S\ref{s2.3}, identify locally $(S\times \bar{S},\omega  \op -\omega)$ with $(T^*(L \t M) , \omega_{L\t M}),$ where $\omega_{L\t M}$ is the symplectic form on $T^*(L \t M),$
and thus $\pi_1\times \pi_2$ is identified with the projection $\pi:T^*(L\times M)\ra L\times M,$
that is $N_1$ with the zero section, and $N_2$ with the graph $\Gamma_{\d h_{12}}$ for a holomorphic function $h_{12}: L\times M \ra \C$ normalized by $h_{12}\vert_{(\pi_1 \times \pi_2)((L\times M)\cap \Delta_S)}=0.$
Consider the submanifold $P:=S\times M\subset S\times \bar S$ and intersect the Lagrangians $N_1$ and $N_2$ with this submanifold, yielding respectively
$N_1 \cap P= N_1$ and $N_2\cap P= \Delta_M,$ which both lie in $S\times M. $ Observe that
\e
\om\op (-\om) \vert_{P}=p_1^{-1}\om,
\label{p1om}
\e
where  $p_i:S\times S \ra S$ are the projections to the $i$-th factor. 
Consider the following diagram of inclusions and projections in $S\t\bar S$ and~$S$:
\begin{equation}
\begin{gathered}
\xymatrix@!0@C=48pt@R=20pt{  
& {N_1\cap P=N_1} \ar[dd]^(0.3){p_1} \ar@<1ex>[drr]^\subset
\\
{N_2\cap P=\De_M} \ar[rrr]_(0.55)\subset \ar[dd]^{p_1} &&& P=S\t M \ar[dd]^{p_1} \ar[rrr]_(0.45)\subset &&& *+[l]{S\t\bar S} \ar[dd]_{p_1} 
\\ 
& L \ar[drr]^\subset
\\
{M} \ar[rrr]^(0.35)\subset &&& S \ar@{=}[rrr] &&& *+[l]{S.} }
\end{gathered}
\label{eq1}
\end{equation}
Under the local symplectomorphisms $S\cong T^*U_1$ and $S\t\bar S\cong T^*(U_1\t V_2)$, equation \eq{eq1} is identified with the diagram:
\begin{equation}
\begin{gathered}
\xymatrix@!0@C=48pt@R=20pt{ 
& {z(U_1)\t z(V_2)} \ar[dd]^(0.3){\pi_{T^*U_1}} \ar@<.5ex>[drr]^\subset
\\
 {\Ga_{\d h_{12}}\vert_{(\id_{U_1}\t \pi_1\vert_{V_2'}^{-1})(U_1)}}
\ar[rrr]_(0.55)\subset \ar[dd]^{\pi_{T^*U_1}} &&& T^*U_1\t z(V_2) \ar[dd]^{\pi_{T^*U_1}} \ar[rrr]_(0.45)\subset &&& *+[l]{T^*(U_1\t V_2)} \ar[dd]_{\pi_{T^*U_1}} 
\\
& z(U_1) \ar[drr]^\subset
\\
{\Ga_{\d f_1}} \ar[rrr]^(0.35)\subset &&& T^*U_1 \ar@{=}[rrr] &&& *+[l]{T^*U_1.} }
\end{gathered}
\label{eq2}
\end{equation}
Here $z:U_1\ra T^*U_1$, $z:V_2\ra T^*V_2$ are the zero section maps. To see that $N_2\cap P=\De_{V_2}$ is identified with $\Ga_{\d h_{12}}\vert_{(\id_{U_1}\t \pi_1\vert_{V_2'}^{-1})(U_1)}$, note that $N_2$ is identified with $\Ga_{\d h_{12}}$, and
\begin{equation*}
(\pi_1\t\pi_2)(\De_{V_2})=(\pi_1\vert_{V_2'}\t\id_{V_2})(V_2)=(\id_{U_1}\t \pi_1\vert_{V_2'}^{-1})(U_1)\subset U_1\t V_2,
\end{equation*}
so that $\De_{V_2}$ is identified with a subset of $T^*(U_1\t V_2)\vert_{(\id_{U_1}\t \pi_1\vert_{V_2'}^{-1})(U_1)}\subset T^*(U_1\t V_2)$.

Equation \eq{eq2} shows that $\pi_{T^*U_1}$ maps $\Ga_{\d h_{12}}\vert_{(\id_{U_1}\t \pi_1\vert_{V_2'}^{-1})(U_1)}\ra\Ga_{\d f_1}$. Writing points of $T^*U_1$ as $(x,\al)$ for $x\in U_1$ and $\al\in T^*_xU_1$, and points of $T^*V_2$ as $(y,\be)$ for $y\in V_2$ and $\be\in T^*_xV_2$, we have
\begin{align*}
\Ga_{\d f_1}&=\bigl\{\bigl(x,\d f_1(x)\bigr):x\in U_1\bigr\},\\
\Ga_{\d h_{12}}\vert_{(\id_{U_1}\t \pi_1\vert_{V_2'}^{-1})(U_1)}&=\bigl\{\bigl(x,\d_x h_{12}(x,\pi_1\vert_{V_2'}^{-1}(x)),\pi_1\vert_{V_2'}^{-1}(x),0\bigr):x\in U_1\bigr\},
\end{align*}
where the final term $\be=0$ as $\Ga_{\d h_{12}}\vert_{(\id_{U_1}\t \pi_1\vert_{V_2'}^{-1})(U_1)}\subset T^*U_1\t z(V_2)$. The projection $\pi_{T^*U_1}:T^*U_1\t T^*V_2\ra T^*V_2$ maps $(x,\al,y,\be)\mapsto(x,\al)$. So from \eq{eq2} we see that
$\d_x h_{12}(x,\pi_1\vert_{V_2'}^{-1}(x))=\d_x f_1(x)$ for $x\in U_1$, that is, $\d \bigl(h_{12}\ci (\id_{U_1}\t\pi_1\vert_{V_2'}^{-1})\bigr)=\d f_1$ in 1-forms on $U_1$. Therefore $h_{12}\ci \bigl(\id_{U_1}\t\pi_1\vert_{V_2'}^{-1})\bigr)=f_1+c$ for some $c\in\R$. But $f_1$ and $h_{12}$ are normalized by $f_1\vert_{U_1\cap V_2}=0$ and $h_{12}\vert_{N_1\cap N_2}=0$, so as $p_1(N_1\cap N_2)\subset U_1\cap V_2$ we see that $c=0$. Hence $h_{12}\ci \bigl(\id_{U_1}\t\pi_1\vert_{V_2'}^{-1})\bigr)=f_1$, and the left hand triangle of \eq{main1}
commutes.

\smallskip

Using an analogous argument replacing \eq{eq1}--\eq{eq2} by the equations:
\begin{align*}
\xymatrix@!0@C=48pt@R=20pt{  
& {N_1\cap Q=N_1} \ar[dd]^(0.3){p_2} \ar@<1ex>[drr]^\subset
\\
{N_2\cap Q=\De_L} \ar[rrr]_(0.55)\subset \ar[dd]^{p_2} &&& Q:=L\t S \ar[dd]^{p_2} \ar[rrr]_(0.45)\subset &&& *+[l]{S\t\bar S} \ar[dd]_{p_2} 
\\ 
& M \ar[drr]^\subset
\\
{L} \ar[rrr]^(0.35)\subset &&& \bar S \ar@{=}[rrr] &&& *+[l]{\bar S,} }
\\
\xymatrix@!0@C=48pt@R=20pt{ 
& {z(U_1)\t z(V_2)} \ar[dd]^(0.3){\pi_{T^*V_2}} \ar@<.5ex>[drr]^\subset
\\
 {\Ga_{\d h_{12}}\vert_{(\pi_2\vert_{U_1'}^{-1}\t\id_{V_2})(V_2)}}
\ar[rrr]_(0.55)\subset \ar[dd]^{\pi_{T^*V_2}} &&& z(U_1)\t T^*V_2 \ar[dd]^{\pi_{T^*V_2}} \ar[rrr]_(0.45)\subset &&& *+[l]{T^*(U_1\t V_2)} \ar[dd]_{\pi_{T^*V_2}} 
\\
& z(U_1) \ar[drr]^\subset
\\
{\Ga_{\d g_2}} \ar[rrr]^(0.35)\subset &&& T^*V_2 \ar@{=}[rrr] &&& *+[l]{T^*V_2.} }
\end{align*}
we see that the right hand triangle of \eq{main1} commutes.

\smallskip

Finally, the last part of Proposition \ref{local} follows directly from Theorem \ref{sm5thm1}(i).
\end{proof}

As sketched already in \S\ref{s2},
note that the local biholomorphisms $\pi_1\vert_{V_2'}^{-1}$, $\pi_2\vert_{U_1'}^{-1}$ coming from polarizations $\pi_1,\pi_2,$ induce isomorphisms 
\eq{isocanbund.1}
between the canonical bundles of the Lagrangian submanifolds.
In terms of charts, we have an $L$-chart $(P_1,U_1,f_1,i_1),$ an $M$-chart $(Q_2,V_2,g_2,j_2)$ and an $LM$-chart $(R_{12},W_{12},h_{12},k_{12})$ induced by $E_1,E_2$, $E_1\t E_2$ respectively, where $P_1=X\cap U_1,$ $Q_2=X\cap V_2,$ $R_{12}=\{x\in X:(x,x)\in W_{12}\}$. Let us denote the corresponding principal $\Z_2$-bundles $Q_{P_1,U_1,f_1,i_1},$ $Q_{Q_2,V_2,g_2,j_2}$
and $Q_{R_{12},W_{12},h_{12},k_{12}}$
parametrizing square roots of these isomorphisms of canonical bundles as explained in 
the introduction of~\S\ref{s3}.

\smallskip

Note that Proposition \ref{local} defined two 
embeddings $\Phi_{12}:U_1'\hookra W_{12}$ and $\Psi_{12}:V_2'\hookra W_{12}$ which satisfy all the properties of Definition \ref{sm5def}, giving embeddings of charts $\Phi_{12}:(P_1',U_1',f_1',i_1')\hookra (R_{12},W_{12},h_{12},k_{12})$ and $\Psi_{12}:(Q_2',V_2',g_2',j_2')\hookra (R_{12},W_{12},h_{12},k_{12}),$
where $(P_1',U_1',f_1',i_1')$ is a subchart of $(P_1,U_1,f_1,i_1),$
and $(Q_2',V_2',g_2',j_2')$ is a subchart of $(Q_2,V_2,g_2,j_2)$ with $P_1'=Q_2'=R_{12}$.

\smallskip

Thus Definition \ref{sm5def} gives isomorphisms
of line bundles on $P^\red$:
\e
J_{\Phi_{12}}:K_{U_1}^{\ot^2}\big\vert_{P_1^{\prime\red}}
\,{\buildrel\cong\over\longra}\,\Phi_{12}\vert_{P_1^{\prime\red}}^*
\bigl(K_{W_{12}}^{\ot^2}\bigr),
\label{sm5eq9.1}
\e
induced by $\Phi_{12}:(P_1',U_1',f_1',i_1')\hookra (R_{12},W_{12},h_{12},k_{12})$, and
\e
J_{\Psi_{12}}:K_{V_2}^{\ot^2}\big\vert_{Q_2^{\prime\red}}
\,{\buildrel\cong\over\longra}\,\Psi_{12}\vert_{Q_2^{\prime\red}}^*
\bigl(K_{W_{12}}^{\ot^2}\bigr),
\label{sm5eq9.2}
\e
induced by $\Psi_{12}:(Q_2',V_2',g_2',j_2')\hookra (R_{12},W_{12},h_{12},k_{12})$.


\medskip

Following Definition \ref{sm5def}, define
$\pi_{\Phi_{12}}:P_{\Phi_{12}}\ra P_1'$,
$\pi_{\Psi_{12}}:P_{\Psi_{12}}\ra Q_2'$ to be the principal $\Z_2$-bundles parametrizing square roots of $J_{\Phi_{12}},J_{\Psi_{12}}$ on $R_{12}^\red$. Then we naturally get isomorphisms of principal $\Z_2$-bundles $\La_\Phi$ and $\La_\Psi$
\e
\La_{\Phi_{12}}:Q_{R_{12},W_{12},h_{12},k_{12}}\,{\buildrel\cong\over\longra}\,
P_{\Phi_{12}}\ot_{\Z_2} Q_{P_1,U_1,f_1,i_1}\vert_{R_{12}},
\label{sm6eq9.1}
\e
\e
\La_{\Psi_{12}}:Q_{R_{12},W_{12},h_{12},k_{12}}\,{\buildrel\cong\over\longra}\,
P_{\Psi_{12}}\ot_{\Z_2} Q_{Q_2,V_2,g_2,j_2}\vert_{R_{12}}.
\label{sm6eq9.2}
\e


Thus, we can apply Theorem \ref{sm5thm2}, which yields 
natural isomorphisms of perverse sheaves on
$X$:
\e
\Th_{\Phi_{12}}:\PV_{U_1',f_1'}^\bu\longra\Phi_{12}\vert_{P_1'}^*\bigl(\PV_{W_{12},h_{12}}^\bu\bigr)
\ot_{\Z_2}P_{\Phi_{12}},
\label{iso1}
\e
\e
\Th_\Psi:\PV_{V_2',g_2'}^\bu\longra\Psi_{12}\vert_{Q_2'}^*\bigl(\PV_{W_{12},h_{12}}^\bu\bigr)
\ot_{\Z_2}P_{\Psi_{12}},
\label{iso2}
\e
where\/ $\PV_{U_1',f_1'}^\bu,\PV_{V_2',g_2'}^\bu,\PV_{W_{12},h_{12}}^\bu$ are the perverse sheaves of
vanishing cycles from\/ {\rm\S\ref{s2.1},} and
if\/ $\cQ^\bu$ is a perverse sheaf on\/ $X$ then
$\cQ^\bu\ot_{\Z_2}P_{\Phi_{12}}$ is as in Definition\/
{\rm\ref{sm2def3}}. Also diagrams \eq{sm5eq14} and \eq{sm5eq15} commute.
Now, combining the isomorphisms \eq{sm6eq9.1}--\eq{iso2} we get
isomorphisms
\ea
\al_{12}=\Th_{\Phi_{12}}\ot\La_{\Phi_{12}}^{-1}&:\bigl(\PV_{U_1,f_1}^\bu \ot_{\Z_2} Q_{P_1,U_1,f_1,i_1}\bigr)\vert_{R_{12}} \longra \PV_{W_{12},h_{12}}^\bu  \ot_{\Z_2} Q_{R_{12},W_{12},h_{12},k_{12}},
\label{iso3}\\
\be_{12}=\Th_{\Psi_{12}}\ot\La_{\Psi_{12}}^{-1}&:\bigl(\PV_{V_2,g_2}^\bu \ot_{\Z_2} Q_{Q_2,V_2,g_2,j_2}\bigr)\vert_{R_{12}} \longra \PV_{W_{12},h_{12}}^\bu  \ot_{\Z_2} Q_{R_{12},W_{12},h_{12},k_{12}},
\label{iso3.1}\\
\be_{12}^{-1}\ci\al_{12}&:\bigl(\PV_{U_1,f_1}^\bu \ot_{\Z_2} Q_{P_1,U_1,f_1,i_1}\bigr)\vert_{R_{12}} \longra \bigl(\PV_{V_2,g_2}^\bu \ot_{\Z_2} Q_{Q_2,V_2,g_2,j_2}\bigr)\vert_{R_{12}}.
\label{iso3.2}
\ea

\medskip

\subsection{Comparing perverse sheaves of vanishing cycles associated to polarizations}
\label{s3.2}

\medskip

Given a complex symplectic manifold $(S,\om)$ and $L,$ $M$ Lagrangian submanifolds in $S,$ define $X$ to be their intersection.
From \S\ref{s2.3}, locally in the complex analytic topology near a point $x\in X,$ we can choose an open set $S'\subset S$ and we can choose a polarization transverse to both $L$ and $M$ such that $S'\cong T^*L$ and $M\cong \Gamma_{\d f}$ so that $X\cap S'=\Crit(f)$ for a holomorphic function $f$ defined on $U\subseteq L\cap S'.$ Thus we get a perverse sheaf of vanishing cycle $\PV^\bu_{U,f}.$ In \S\ref{s3.1} we compared perverse sheaves of vanishing cycles associated to two transverse polarizations. In this section we will investigate about how they behave if we consider four polarizations, pairwise transverse in a $4$-cycle. This result will be used in \S\ref{s3.3} to prove Theorem \ref{s1thm1}.

\smallskip

We choose four polarizations $\pi_i:S\ra E_i$ for $i=1,\ldots,4$ 
all transverse to each other except perhaps for the pairs $E_1$, $E_3$ and $E_2,$ $E_4.$ Define $L$-charts $(P_1,U_1,f_1,i_1),(P_3,U_3,f_3,i_3)$ from $\pi_1,\pi_3$ and $M$-charts $(Q_2,V_2,g_2,j_2),(Q_4,V_4,g_4,j_4)$ from $\pi_2,\pi_4$, as in the beginning of \S\ref{s3}. Define $LM$-charts $(R_{12},W_{12},h_{12},k_{12})$ from $\pi_1,\pi_2$, $(R_{32},W_{32},h_{32},k_{32})$ from $\pi_3,\pi_2$, $(R_{34},W_{34},h_{34},k_{34})$ from $\pi_3,\pi_4$, 
$(R_{14},W_{14},h_{14},k_{14})$ from $\pi_1,\pi_4$ as in \S\ref{s3.1}, with embeddings of charts $\Phi_{12},\Psi_{12},\ldots,\Phi_{14},\Psi_{14}$ from subcharts of $(P_a,U_a,f_a,i_a),(Q_b,V_b,g_b,j_b)$ to $(R_{ab},W_{ab},h_{ab},k_{ab})$.

\medskip

Similarly to Proposition \ref{local}, we have the following result:

\begin{prop}
Given a complex symplectic manifold $(S,\om)$ and $L,$$M$ Lagrangian submanifolds in $S,$ define $X$ to be their intersection.
Suppose we are given four polarizations $\pi_1,\ldots,\pi_4,$ and choose data 
$(P_a,U_a,f_a,i_a)$ for $a=1,3,$ $(Q_b,V_b,g_b,j_b)$ for $a=2,4$ and\/ $(R_{ab},W_{ab},h_{ab},k_{ab}),\Phi_{ab},\Psi_{ab}$ for $ab=12,32,34,14$ as above.

\smallskip

Then there exist an open set $Z$ in $U_1\t V_2\t U_3\t V_4,$ a holomorphic function $F:Z\ra\C,$ and open neighbourhoods $U_a'$ of $X\cap S_1\cap S_2\cap S_3\cap S_4$ in $U_a,$ and\/ $V_b'$ of $X\cap S_1\cap S_2\cap S_3\cap S_4$ in $V_b,$ and\/ $W_{ab'}$ of\/ $\bigl\{(x,x):x\in X\cap S_1\cap S_2\cap S_3\cap S_4\bigr\}$ in $W_{ab}$ for all\/ $a=1,3,$ $b=2,4$ such that the following diagram commutes:

\clearpage

\e
\begin{gathered}
\text{\begin{footnotesize}$\displaystyle
\xymatrix@!0@R50pt@C93pt{ *+[r]{U_1'} \ar[dddddrrr]^(0.25){f_1\vert_{U_1'}}
\ar[ddrr]^(0.65){\begin{subarray}{l}\id_{U_1}\times \pi_{1}\vert_{M}^{-1}\t  \\ (\pi_{3}\vert_{M}\ci \pi_{1}\vert_{M}^{-1})\t \pi_{1}\vert_{M}^{-1}\end{subarray}}
\ar[rr]_{\Phi_{12}\vert_{U_1'}=\id_{U_1}\times \pi_{1}\vert_{V_2}^{-1}}
\ar[dd]^(0.6){\begin{subarray}{l}\Phi_{14}\vert_{U_1'}=\\\pi_{1}\vert_{V_4}^{-1}\times \\ \id_{U_1}\end{subarray}} && W_{12}' \ar[dddddr]^(0.2){h_{12}\vert_{W_{12}'}}
\ar[dd]_(0.25){
\begin{subarray}{l} \pi_{U_1}\times  \pi_{V_2} \times \\ 
(\pi_{3}\vert_{M} \ci \pi_{V_2})\times \\ \pi_{V_2}\end{subarray}} && 
*+[l]{V_2'}
\ar[ddll]^(0.45){\begin{subarray}{l} \pi_{2}\vert_{L}^{-1}\t 
\id_{\ti M}\times \\ \pi_{3}\vert_{\ti M}\t \id_{V_2'}\end{subarray}}
\ar[dddddl]_(0.25){g_2\vert_{V_2'}} \ar[ll]^{\Psi_{12}\vert_{V_2'}=\pi_{2}\vert_{L}^{-1}
\times id_{V_2'}}
\ar[dd]^{\begin{subarray}{l} \Psi_{32}\vert_{V_2'}= \\ id_{V_2'} \times
\\ \pi_{2}\vert_{L}^{-1}\end{subarray}}
\\
\\
*+[r]{W_{14}'} 
\ar@<.4ex>@/^.2pc/[dddrrr]_(0.55){h_{14}\vert_{W_{14}'}}
\ar[rr]^(0.5){\begin{subarray}{l}
\pi_{U_1} \t (\pi_{2}\vert_{L}\ci (\pi_{4}\vert_{L}\t \pi_{1}\vert_{M})^{-1})\t \\
(\pi_{3}\vert_{M}\ci (\pi_{4}\vert_{L}\t \pi_{1}\vert_{M})^{-1})\t \pi_{V_4'}
\end{subarray}} && {Z} \ar[dddr]^(0.5){F} && *+[l]{W_{32}'}
\ar[dddl]^(0.3){h_{32}\vert_{W_{32}'}} 
\ar[ll]_{\begin{subarray}{l} 
\pi_{U_3'}\t(\pi_2\vert_{L}\ci\pi_{U_3'})\t(\pi_3\vert_{M}\ci\pi_{V_2'})\t\pi_{V_2'}\end{subarray}}
\\
\\
*+[r]{V_4'} 
\ar[uurr]_(0.34){\begin{subarray}{l} \pi_{4}\vert_{L}^{-1}
\times (\pi_{2}\vert_{L}\ci \pi_{4}\vert_{L}^{-1})\t
\\ \pi_{4}\vert_{L}^{-1}\t \id_{V_4'}\end{subarray}}
\ar@<-.5ex>[drrr]_(0.4){g_4\vert_{V_4'}} \ar[uu]_{\begin{subarray}{l} \Psi_{14}\vert_{V_4'}= \\ id_{V_4'}\times \\
\pi_{4}\vert_{L}^{-1}\end{subarray}} \ar[rr]_(0.7){\Psi_{34}\vert_{V_4'}=\pi_{4}\vert_{L}^{-1}\times id_{V_4'}} && 
W_{34}'
\ar@<-.5ex>@/_.2pc/[dr]_(0.4){h_{34}\vert_{W_{34}'}} \ar[uu]_(0.45){\begin{subarray} \small
\pi_{U_3'}\times \\ (\pi_{2}\vert_{L}\ci \pi_{U_3'}) \t \\ \pi_{U_3'} \t  \pi_{V_4'}
\end{subarray}} && *+[l]{U_3'}
\ar[uull]_(0.6){\begin{subarray}{l} 
\id_{U_3'} \t \pi_2\vert_{L} \t \\
\id_{U_3'} \t \pi_3\vert_{M}^{-1}
\end{subarray}}
\ar@<.5ex>[dl]^(0.3){f_3\vert_{U_3'}} \ar[ll]_{\Phi_{34}\vert_{U_3'}=id_{U_3'}\times \pi_{3}\vert_{M}^{-1}}
\ar[uu]_(0.3){\begin{subarray}{l}\Phi_{32}\vert_{U_3'}=\\ \pi_3\vert_{M}^{-1} \times \\ id_{U_3'}
\end{subarray}}
\\
&&& \C.  }\!\!\!\!\!\!\!\!\!\!\!\!\!{}
$\end{footnotesize}}
\end{gathered}
\label{main2.1}
\e
Moreover, locally in the complex analytic topology $\Crit(F)\cong \Crit(h_{ij})\cong\Crit(f_i)\cong\Crit(g_j)$ as complex analytic spaces for all $i,j.$ In particular, we can choose appropriate coordinate systems under which we can write $F$ as the sum of functions $h_{ij}$ or $f_i$ or $g_j$ and non degenerate quadratic forms, that is they are all stably equivalent to each other in the sense of Theorem \ref{sm5thm1}.
\label{main2}
\end{prop}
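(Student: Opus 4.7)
My plan is to extend the symplectic-geometric construction of Proposition~\ref{local} to the four-polarization setting by passing to an appropriate product symplectic manifold. First I would set up the ambient space. Consider $(\tilde S,\tilde \om) = (S\t \bar S\t S\t \bar S,\, \om\op(-\om)\op\om\op(-\om))$ of dimension $8n$, equipped with the product polarization $\tilde\pi = \pi_1\t\pi_2\t\pi_3\t\pi_4 : \tilde S\ra E_1\t E_2\t E_3\t E_4\cong L\t M\t L\t M$. In $\tilde S$ I take the two $4n$-dimensional Lagrangian submanifolds $\tilde N_1 = L\t M\t L\t M$ (the zero section of the local $T^*(L\t M\t L\t M)$-structure given by $\tilde\pi$) and $\tilde N_2 = \De_S\t\De_S = \{(s_1,s_1,s_2,s_2):s_1,s_2\in S\}$, where $\De_S\subset S\t\bar S$ is the diagonal used in Proposition~\ref{local}. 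Our hypotheses (each $\pi_i$ transverse to $L$ and $M$; the pairs $(\pi_1,\pi_2)$ and $(\pi_3,\pi_4)$ pairwise transverse) imply that $\tilde\pi$ is transverse to both $\tilde N_1$ and $\tilde N_2$ near the quadruple diagonal $\{(x,x,x,x):x\in X\cap S_1\cap S_2\cap S_3\cap S_4\}$.

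Next, I would apply the Lagrangian neighbourhood theorem (Theorem~\ref{lag_ngb}) to identify $\tilde S$ locally with a neighbourhood of the zero section in $T^*(L\t M\t L\t M)$, sending $\tilde N_1$ to the zero section and $\tilde N_2$ to the graph $\Ga_{\d F}$ of a holomorphic function $F:Z\ra \C$ on an open $Z\subset U_1\t V_2\t U_3\t V_4$, normalized so that $F$ vanishes on $\tilde\pi(\tilde N_1\cap \tilde N_2)$. Because $\tilde N_2 = \De_S\t \De_S$ factors as a product of two copies of the Lagrangian from Proposition~\ref{local}, an argument analogous to the proof of Proposition~\ref{local} shows $F(x_1,y_2,x_3,y_4) = h_{12}(x_1,y_2) + h_{34}(x_3,y_4)$, where $h_{12},h_{34}$ are produced by applying Proposition~\ref{local} to the pairs $(\pi_1,\pi_2)$ and $(\pi_3,\pi_4)$. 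Globally $\tilde N_1\cap\tilde N_2 = X\t X$, but by shrinking $Z$ to a neighbourhood of the main diagonal I arrange $\Crit(F)\cong X$ locally, as required.

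Then I would verify the commutativity of the large diagram~\eq{main2.1}. The triangles involving $(\pi_1,\pi_2)$ and $(\pi_3,\pi_4)$ are immediate from Proposition~\ref{local} applied to the first-two and last-two factor subsystems of $\tilde S$. For the mixed pairs $(\pi_3,\pi_2)$ and $(\pi_1,\pi_4)$, I would apply Proposition~\ref{local} separately to obtain $h_{32}$ and $h_{14}$, and then compare them to $F$ using Theorem~\ref{sm5thm1}(i): the embeddings of $W_{32}'$ and $W_{14}'$ into $Z$ prescribed by the diagram realize $F$ as a closed-embedding extension of $h_{32}$ (respectively $h_{14}$) by a non-degenerate quadratic form in the normal directions, yielding the required triangles. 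The triangles involving $f_a,g_b$ then follow by precomposition with the embeddings of Proposition~\ref{local}. Finally, Theorem~\ref{sm5thm1}(i) produces the stable-equivalence statement: in appropriate coordinates $F$ equals each of $h_{ab},f_a,g_b$ plus a non-degenerate quadratic form, whence $\Crit(F)\cong\Crit(h_{ab})\cong\Crit(f_a)\cong\Crit(g_b)\cong X$ locally.

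The main obstacle is the mixed-pair compatibility. The product decomposition $F = h_{12}+h_{34}$ does not a priori give $F\ci(\text{embedding of }W_{32}') = h_{32}$ on the nose; one must check that the specific embeddings in diagram~\eq{main2.1}, built from the various compositions $\pi_i|_L^{-1},\pi_j|_M^{-1},\pi_i|_L\ci\pi_j|_M^{-1},$ etc., are precisely the closed embeddings of Theorem~\ref{sm5thm1}(i) realizing the stable equivalence between $F$ and $h_{32}$ (respectively $h_{14}$). The bulk of the technical work lies in this bookkeeping and the associated normal-coordinate computations, carefully tracking how each projection in the big diagram corresponds to a sub-Lagrangian inclusion inside the eight-dimensional product construction.
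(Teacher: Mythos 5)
Your choice of the two Lagrangians in $S\times\bar S\times S\times\bar S$ is wrong, and this makes the construction fail at its core. You take $\tilde N_1 = L\times M\times L\times M$ and $\tilde N_2 = \Delta_S\times\Delta_S$, so that the whole setup factors as a product of two copies of the situation in Proposition~\ref{local}, and indeed this forces $F = h_{12}\boxplus h_{34}$, whence $\Crit(F) = \Crit(h_{12})\times\Crit(h_{34})\cong X\times X$ near the relevant points. But the proposition requires $\Crit(F)\cong X$. Your proposed fix --- shrinking $Z$ to a neighbourhood of the main diagonal --- does not help: a neighbourhood of the diagonal in $X\times X$ still has the dimension of $X\times X$, not of $X$, so $\Crit(F)$ cannot become (isomorphic to) $X$ by restriction. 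The stable-equivalence claim $\Crit(F)\cong\Crit(h_{ab})\cong\Crit(f_a)\cong\Crit(g_b)$ therefore fails with your $F$. Moreover, with $F = h_{12}\boxplus h_{34}$ the type-(iii) triangles (those of the form $W_{ab}'\to Z\to\C$) in diagram \eq{main2.1} cannot all commute: the composition $W_{32}'\to Z\to\C$ would involve a genuine contribution from both $h_{12}$ and $h_{34}$ rather than equalling $h_{32}$.

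The paper avoids this by making a different, crucially ``mixed'' choice of Lagrangians: $N_1 = L\times\Delta_S\times M$ (where $\Delta_S$ sits in factors $2,3$) and $N_2 = \Delta_S\times\Delta_S$. One checks $N_1\cap N_2 = \{(x,x,x,x):x\in X\}\cong X$, the diagonal copy of $X$, and the resulting $F$ has $\Crit(F)\cong X$ as required. The paper then establishes the four type-(iii) triangles by intersecting $N_1,N_2$ with intermediate submanifolds $P_{12} = S\times S\times\Delta_S$, $P_{14} = S\times\Delta_S\times S$, $P_{32} = L\times S\times S\times M$, $P_{34} = \Delta_S\times S\times S$, mimicking the argument of Proposition~\ref{local} (the projections $p_1\times p_2$, etc.\ play the role $p_1$ played there); the types (i) and (ii) are handled by hand and by Proposition~\ref{local} respectively, exactly as you propose. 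So your overall strategy (pass to the product symplectic manifold, reduce to Proposition~\ref{local}-type arguments on subdiagrams) is correct in spirit, but you need to replace $L\times M\times L\times M$ with $L\times\Delta_S\times M$ for the construction to produce the right critical locus and make the mixed-pair triangles commute.
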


\begin{proof} In equation \eq{main2.1} there are three kinds of small triangles:
\begin{itemize}
\setlength{\itemsep}{0pt}
\setlength{\parsep}{0pt}
\item[(i)] Eight triangles with vertices $U_a',W_{ab}',Z$ or $V_b',W_{ab}',Z$.
\item[(ii)] Eight triangles with vertices $U_a',W_{ab}',\C$ or $V_b',W_{ab}',\C$.
\item[(iii)] Four triangles with vertices $W_{ab}',Z,\C$.
\end{itemize}
To show that \eq{main2.1} commutes, we must show all these triangles commute. For the triangles of type (i) we can just check this by hand in an elementary way. The triangles of type (ii) commute by Proposition \ref{local} applied to $\pi_1,\pi_2$ or $\pi_3,\pi_2$ or $\pi_3,\pi_4$ or $\pi_1,\pi_4$. This leaves the triangles of type (iii), which we will show commute by a similar proof to Proposition~\ref{local}.

\smallskip

Consider the product symplectic manifold $(S\times \bar{S}\times S\times \bar{S}, \om\op -\om\op \om \op -\om),$ where $\bar{S}$ denotes the symplectic manifold $S$ corresponding to the symplectic form with the opposite sign. Write $p_i: S\times S\times S\times S\ra S$ for the projection to the $i$-th factor.
In $S\times \bar S\times S \times \bar S$ consider the Lagrangian submanifolds $N_1:=L\times \Delta_{S}\times M$ and $N_2:=\Delta_{S}\times \Delta_{S}.$
Identify it with $T^*(L\times M\times L\times M)$, and thus $\pi_1\times \pi_2\times \pi_3\times \pi_4$ with $\pi:T^*(L\times M\times L\times M)\ra L\times M\times L\times M,$ that is $N_1$ with the zero section, and $N_2:=\Gamma_{\d F}$ for a holomorphic function 
$F:Z\ra\C$ for open $Z\subseteq L\times M\times L\times M$ normalized by $F\vert_{(\pi_1 \times \pi_2\times \pi_3\times \pi_4)(N_1\cap N_2)}=0.$
Consider the submanifolds
$$P_{12}:= S\times S\times \De_{S}^{34}, \quad P_{32}:= L\times S\times S\times M, \quad P_{34}:=  \De_{S}^{12}\times S\times S, \quad P_{14}:= S\times \De_{S}^{32}\times S.$$

In the same style as the proof of Proposition \ref{main1}, intersect the Lagrangians with these submanifolds. We can either identify $N_1$ with the zero section and $N_2=\Ga_{\d F},$
or $N_1=\Ga_{-\d F}$ and $N_2$ with the zero section. We will use both the options.
Let us start with the submanifold $P_{12},$ for which we use the second identification. 
Consider the following diagram of inclusions and projections in $S\t\bar S\t S\t\bar S$ and~$S\t \bar S$:
\begin{equation}
\begin{gathered}
\xymatrix@!0@C=60pt@R=20pt{ 
& {N_1\cap P_{12}=L \t\De^{234}_{M}} \ar[dd]^(0.3){p_1\t p_2} \ar[drr]^\subset
\\
{N_2\cap P_{12}=N_2} \ar[rrr]_(0.55)\subset \ar[dd]^{p_1\t p_2} &&& P_{12}=S\times S\times \De_{S}^{34} \ar[dd]^{p_1\t p_2} \ar[rrr]_(0.45)\subset &&& *+[l]{S\t\bar{S} \t S\t\bar{S}} \ar[dd]_{p_1\t p_2} 
\\
& {L\t M} \ar[drr]
\\
{\De_{S}} \ar[rrr]^(0.55)\subset &&& S\t\bar{S} \ar@{=}[rrr] &&& *+[l]{S\t\bar{S}.} }
\end{gathered}
\label{eq1.1}
\end{equation}
Under the local symplectomorphisms $S\t \bar S\cong T^*(L\t M)$ and $S\t\bar S\t S\t\bar S\cong T^*(L\t M\t L\t M)$, equation \eq{eq1.1} is identified with the diagram:
\begin{equation}
\begin{gathered}
\xymatrix@!0@C=60pt@R=30pt{  
& {\Ga_{-\d F}\vert_{(\pi_{L}\times  \pi_{M} \times 
(\pi_{3}\vert_{M} \ci \pi_{M})\times \pi_{M})(W_{12}')}} \ar[drr]^(0.6)\subset \ar[dd]^(0.35){\pi_{12}}
\\
{z(L)\!\t\! z(M)\!\t \!z(L)\!\t\! z(M)}
\ar[rrr]_(0.50)\subset \ar[dd]^{\pi_{12}} &&& z(L)\!\t\! z(M)\!\t\! T^*L\!\t\! T^*M \ar[dd]^{\pi_{12}} \ar[rrr]_(0.35)\subset &&& *+[l]{T^*L\!\t\! T^*M \!\t\! T^*L\!\t\! T^*M} \ar[dd]_{\pi_{12}} 
\\
& {\Ga_{-\d h_{12}}} \ar[drr]^\subset
\\
{z(L)\t z(M)} \ar[rrr]^(0.55)\subset &&& T^*L\t T^*M \ar@{=}[rrr] &&& *+[l]{T^*L\t T^*M.} }
\end{gathered}
\label{eq2.1}
\end{equation}
Here $z:L\ra T^*L$, $M\ra T^*M$ are the zero section maps. To understand this, note that $\pi_1\t\pi_2\t\pi_2\t\pi_4$ maps 
$N_1\cap P_{12}=L \t\De^{234}_{M}$ to the submanifold $(\pi_{L}\times  \pi_{M} \times (\pi_{3}\vert_{M} \ci \pi_{M})\times \pi_{M})(L\t M)$ in $L\t M\t L\t M$. Our identification $S\t\bar S\t S\t\bar S\cong T^*(L\t M\t L\t M)$ maps $N_1\mapsto\Ga_{-\d F}$. Hence the top term $N_1\cap P_{12}$ in \eq{eq1.1} is identified with the top term $\Ga_{-\d F}\vert_{(\pi_{L}\times  \pi_{M} \times (\pi_{3}\vert_{M} \ci \pi_{M})\times \pi_{M})(W_{12}')}$ in \eq{eq2.1}. As for \eq{eq1}--\eq{eq2}, we see from \eq{eq1.1}--\eq{eq2.1} that the triangle of type (iii) in \eq{main2.1} with vertices the top centre $L\t M$, and $L\t M\t L\t M$, and $\C$, commutes.

\smallskip

Similarly, taking intersections with the submanifold $P_{14}$ gives a diagram analogous to \eq{eq1.1}:
$$
\begin{gathered}
\xymatrix@!0@C=60pt@R=20pt{  
& {N_1\cap P_{14}=N_1} \ar[dd]^(0.3){p_1\t p_4} \ar[drr]^\subset
\\
{N_2\cap P_{14}=\De_{S}^{1234}} \ar[rrr]_(0.55)\subset \ar[dd]^{p_1\t p_4} &&& P_{14}= S \t \De_{S}^{23}\t S  \ar[dd]^{p_1\t p_4} \ar[rrr]_(0.45)\subset &&& *+[l]{S\t\bar{S} \t S\t\bar{S}} \ar[dd]_{p_1\t p_4} 
\\
& {L\t M} \ar[drr]^\subset
\\
{\De_{S}} \ar[rrr]^(0.55)\subset &&& S\t \bar{S} \ar@{=}[rrr] &&& *+[l]{S\t\bar{S}.} }
\end{gathered}
$$
Using the first identification, this is identified with the diagram
$$
\begin{gathered}
\xymatrix@!0@C=60pt@R=30pt{ 
& z(L)\t z(M)\t z(L)\t z(M) \ar[drr]^\subset \ar[dd]^(0.3){\pi_{14}}
\\
{\Ga_{\d F}\vert_{\begin{subarray}{l}(\pi_{L} \t (\pi_{2}\vert_{L}\ci (\pi_{4}\vert_{L}\t \pi_{1}\vert_{M})^{-1})\t \\
(\pi_{3}\vert_{M}\ci (\pi_{4}\vert_{L}\t \pi_{1}\vert_{M})^{-1})\t \pi_{M})(W_{14}')\end{subarray}}}
\ar[rrr]_(0.50)\subset \ar[dd]^{\pi_{14}} &&&    z(L)\t T^*M\t T^*L\t z(M) \ar[dd]^{\pi_{14}} \ar[rrr]_(0.45)\subset &&& *+[l]{T^*(L\t M\t L \t M)} \ar[dd]_{\pi_{14}} 
\\
& {z(L)\t z(M)} \ar[drr]^\subset
\\
{\Ga_{\d h_{14}}} \ar[rrr]^(0.55)\subset &&& T^*(L\t M) \ar@{=}[rrr] &&& *+[l]{T^*(L\t M).} }
\end{gathered}
$$
Here $\pi_1\t\pi_2\t\pi_2\t\pi_4$ maps $N_2\cap P_{14}=\De_S^{1234}$ to $(\pi_{L} \t (\pi_{2}\vert_{L}\ci (\pi_{4}\vert_{L}\t \pi_{1}\vert_{M})^{-1})\t (\pi_{3}\vert_{M}\ci (\pi_{4}\vert_{L}\t \pi_{1}\vert_{M})^{-1})\t \pi_{M})(L\t M)$, and we identify $N_2$ with $\Ga_{\d F}$, which is how we get the first term on the middle line. From this we see that the triangle of type (iii) in \eq{main2.1} with vertices the left hand $L\t M$, and $L\t M\t L\t M$, and $\C$, commutes. The remaining two type (iii) triangles can be shown to commute in a similar way. Hence \eq{main2.1} commutes.
Finally, the last part of Proposition \ref{main2} follows directly from Theorem \ref{sm5thm1}(i).
\end{proof}

In the situation of Proposition \ref{main2}, set $Y=X\cap S_1\cap S_2\cap S_3\cap S_4$. Then following the reasoning of \eq{sm5eq9.1}--\eq{iso3.2} which defined the isomorphisms of perverse sheaves $\al_{12},\be_{12}$ in \eq{iso3}--\eq{iso3.1}, from \eq{main2.1} we get a commutative diagram of isomorphisms of perverse sheaves:
\e
\begin{gathered}
\text{\begin{footnotesize}$\displaystyle
\xymatrix@!0@R50pt@C93pt{ *+[r]{\begin{subarray}{l}\ts\bigl(\PV_{U_1,f_1}^\bu \ot_{\Z_2} \\ \ts Q_{P_1,U_1,f_1,i_1}\bigr)\vert_{Y}\end{subarray}} \ar[ddrr]\ar[rr]_{\al_{12}\vert_Y}
\ar[dd]^(0.6){\al_{14}\vert_Y} && 
{\begin{subarray}{l}\ts \bigl(\PV_{W_{12},h_{12}}^\bu  \ot_{\Z_2} \\ \ts Q_{R_{12},W_{12},h_{12},k_{12}}\bigr)\vert_Y\end{subarray}} \ar[dd] && 
*+[l]{\begin{subarray}{l}\ts \bigl(\PV_{V_2,g_2}^\bu \ot_{\Z_2} \\ \ts Q_{Q_2,V_2,g_2,j_2}\bigr)\vert_{Y}\end{subarray}}
\ar[ddll]
\ar[ll]^{\be_{12}\vert_Y}
\ar[dd]_{\be_{32}\vert_Y}
\\
\\
*+[r]{\begin{subarray}{l}\ts \bigl(\PV_{W_{14},h_{14}}^\bu  \ot_{\Z_2} \\ \ts Q_{R_{14},W_{14},h_{14},k_{14}}\bigr)\vert_Y\end{subarray}} 
\ar[rr] && {\PV_{Z,F}^\bu\ot_{\Z_2}Q_{Z,F}}  && *+[l]{\begin{subarray}{l}\ts \bigl(\PV_{W_{32},h_{32}}^\bu  \ot_{\Z_2} \\ \ts Q_{R_{32},W_{32},h_{32},k_{12}}\bigr)\vert_Y\end{subarray}}
\ar[ll]
\\
\\
*+[r]{\begin{subarray}{l}\ts \bigl(\PV_{V_4,g_4}^\bu \ot_{\Z_2} \\ \ts Q_{Q_4,V_4,g_4,j_4}\bigr)\vert_{Y}\end{subarray}} 
\ar[uurr] \ar[uu]_{\be_{14}\vert_Y} \ar[rr]^{\be_{34}\vert_Y} && 
{\begin{subarray}{l}\ts \bigl(\PV_{W_{34},h_{34}}^\bu  \ot_{\Z_2} \\ \ts Q_{R_{34},W_{34},h_{34},k_{34}}\bigr)\vert_Y\end{subarray}}
\ar[uu] && *+[l]{\begin{subarray}{l}\ts \bigl(\PV_{U_3,f_3}^\bu \ot_{\Z_2} \\ \ts Q_{P_3,U_3,f_3,i_3}\bigr)\vert_{Y}\end{subarray}}
\ar[uull]
\ar[ll]_{\al_{34}\vert_Y}
\ar[uu]^{\al_{32}\vert_Y}
  }\!\!\!\!\!\!\!\!\!\!\!\!\!{}
$\end{footnotesize}}
\end{gathered}
\label{main2.2}
\e
Since \eq{main2.2} commutes, we deduce that
\e
\begin{split}
\al_{32}\vert_Y^{-1}\ci&\be_{32}\vert_Y\ci\be_{12}\vert_Y^{-1}\ci\al_{12}\vert_Y=\al_{34}\vert_Y^{-1}\ci\be_{34}\vert_Y\ci\be_{14}\vert_Y^{-1}\ci\al_{14}\vert_Y:\\
&\bigl(\PV_{U_1,f_1}^\bu \ot_{\Z_2} Q_{P_1,U_1,f_1,i_1}\bigr)\vert_{Y}\longra \bigl(\PV_{U_3,f_3}^\bu \ot_{\Z_2} Q_{P_3,U_3,f_3,i_3}\bigr)\vert_{Y}.
\end{split}
\label{main2.3}
\e

Equation \eq{main2.3} tells us something important. Suppose we start with polarizations $\pi_1:S_1\ra E_1$ and $\pi_3:S_3\ra E_3$ transverse to $L,M$, and use them to define $L$-charts $(P_1,U_1,f_1,i_1)$ and $(P_3,U_3,f_3,i_3)$, and hence perverse sheaves $\PV_{U_1,f_1}^\bu \ot_{\Z_2} Q_{P_1,U_1,f_1,i_1}$ on $P_1=X\cap S_1$ and $\PV_{U_3,f_3}^\bu \ot_{\Z_2} Q_{P_3,U_3,f_3,i_3}$ on $P_3=X\cap S_3$. We wish to relate these perverse sheaves on the overlap $X\cap S_1\cap S_3$. To do this, we choose another polarization $\pi_2:S_2\ra E_2$ transverse to $L,M,\pi_1,\pi_3$, and define an $M$-chart $(Q_2,V_2,g_2,j_2)$ and $LM$-charts $(R_{12},W_{12},h_{12},k_{12})$ and $(R_{32},W_{32},h_{32},k_{32})$. Then as in \eq{iso3.2}, $\al_{32}^{-1}\ci\be_{32}\ci\be_{12}^{-1}\ci\al_{12}$ provides the isomorphism 
$\PV_{U_1,f_1}^\bu \ot_{\Z_2} Q_{P_1,U_1,f_1,i_1}\cong\PV_{U_3,f_3}^\bu \ot_{\Z_2} Q_{P_3,U_3,f_3,i_3}$ we want on $X\cap S_1\cap S_2\cap S_3$. Equation \eq{main2.3} shows that this isomorphism is independent of the choice of polarization $\pi_2:S_2\ra E_2$.

\subsection{Descent for perverse sheaves}
\label{s3.3}

To conclude the proof of Theorem \ref{s1thm1}, we use Theorem \ref{sm2thm3}, 
so in particular a descent argument to glue and get a global perverse sheaf. In this 
section we adopt the point of view of charts induced by polarizations. 
This proof follows similar ideas to \cite[\S 6.3]{BBDJS}.

\medskip

Let\/ $(S,\om)$ be a complex symplectic manifold and\/
$L,M$ complex Lagrangian submanifolds in $S,$ and write $X=L\cap M,$
as a complex analytic subspace of\/ $S$. Suppose we are given square
roots\/ $K_L^{1/2},K_M^{1/2}$ for $K_L,K_M$.  
We may choose 
a family of polarizations $\pi_a:S_a\ra E_a$ which defines
a family $\bigl\{(R_a,U_a,f_a,i_a):a\in A\bigr\}$ of
$L$-charts $(P_a,U_a,f_a,i_a)$ on $X$ such that $\{P_a:a\in A\}$ is
an analytic open cover of the analytic space $X$, so that $P_a \cong\Crit(f_a)$
for holomorphic functions $f_a:U_a\ra \C,$ and $U_a$ complex manifolds (Lagrangians),
and $i_a:P_a\hookra U_a$ closed embeddings.  

\smallskip

Then for each $a\in A$
we have a perverse sheaf
\e
i_a^*\bigl(\PV_{U_a,f_a}^\bu\bigr)\ot_{\Z_2}
Q_{P_a,U_a,f_a,i_a}\in\Perv(R_a),
\label{sm6eq14}
\e
for $Q_{P_a,U_a,f_a,i_a}$ the principal $\Z_2$ bundle 
defined in \S\ref{s3} point $(i)$
parametrizing choices of square roots of canonical bundles
$K_L^{1/2} {\buildrel\cong\over\longra} K_M^{1/2}$ which square to isomorphisms \eq{isocanbund.1}.
As explained already in the introduction of \S\ref{s3}, the idea
of the proof is to use Theorem \ref{sm2thm3}(ii) to glue the
perverse sheaves \eq{sm6eq14} on the analytic open cover $\{P_a:a\in
A\}$ to get a global perverse sheaf $P_{L,M}^\bu$ on $X$.

\smallskip

We already know from Proposition \ref{local} that, given an $L$-chart $(P,U,f,i)$ and an $M$-chart $(Q,V,g,j)$
we have the isomorphism \eq{LchMch}, which we recall here:
$$
\xymatrix{\be^{-1}\ci\al: (i^*(\PV^\bu_{U,f})\ot Q_{P,U,f,i})\vert_R \ar[r]^{\quad\cong} & (j^*(\PV^\bu_{V,g})\ot Q_{Q,V,g,j})\vert_R,}
$$
that is, an isomorphism of perverse sheaves from $L$-charts and $M$-charts in $\Perv(P\cap Q).$

\smallskip

Now, to develop our program, we have to show that
if $(P_a,U_a,f_a,i_a)$ and $(P_b,U_b,f_b,i_b)$ are $L$-charts, then we have a canonical isomorphism
\e
\xymatrix{
\de_{ab} : 
(i_a^*(\PV^\bu_{U_a,f_a})\ot Q_{P_a,U_a,f_a,i_a})\vert_{P_a\cap P_b} \ar[r]^{\quad\cong} &
(i_b^*(\PV^\bu_{U_b,f_b})\ot Q_{P_b,U_b,f_b,i_b})\vert_{P_a\cap P_b}}
\label{beta.1}
\e
with the property that for any $M$-chart $(Q,V,g,j)$ coming from $\tilde \pi : \tilde S \ra F$ transverse to $\pi_a$ and $\pi_b,$ we have
\e
\de_{ab} \vert_{P_a\cap P_b\cap Q} = \alpha_{U_b,f_b,W',h'}^{-1}\vert_{P_a\cap P_b\cap Q}\ci\be_{W',h',V,g}\vert_{P_a\cap P_b\cap Q}\ci\be_{W,h,V,g}^{-1}\vert_{P_a\cap P_b\cap Q}\ci \al_{U_a,f_a,W,h} \vert_{P_a\cap P_b\cap Q}.
\label{beta}
\e

\smallskip


To prove this, we first use Proposition \ref{main2}, which provides an associativity result as in \eq{asso} or \eq{main2.3}. In particular, it shows that if $(Q',V',g',j')$ is 
another such $M$-chart, then
\begin{equation*}
\begin{split}
&\alpha_{U_b,f_b,W',h'}^{-1} \vert_{P_a\cap P_b\cap Q\cap Q'} \ci
\be_{W',h',V,g} \vert_{P_a\cap P_b\cap Q\cap Q'} \ci
\be^{-1}_{W,h,V,g} \vert_{P_a\cap P_b\cap Q\cap Q'}\ci
\alpha_{U_a,f_a,W,h} \vert_{P_a\cap P_b\cap Q\cap Q'} = \\&
\alpha_{U_b,f_b,W'',h''}^{-1} \vert_{P_a\cap P_b\cap Q\cap Q'} \ci
\be_{W'',h'',V',g'} \vert_{P_a\cap P_b\cap Q\cap Q'} \ci
\be^{-1}_{W''',h''',V',g'} \vert_{P_a\cap P_b\cap Q\cap Q'} \ci
\alpha_{U_a,f_a,W''',h'''} \vert_{P_a\cap P_b\cap Q\cap Q'}.  
\end{split}
\end{equation*}

Fix two charts $(P_a,U_a,f_a,i_a)$ and $(P_b,U_b,f_b,i_b),$
and choose a 
family $\bigl\{(Q_c,V_c,g_c,j_c):c\in I\bigr\}$ of
$M$-charts $(Q_c,V_c,g_c,j_c)$ on $X$ 
transverse to both $(P_a,U_a,f_a,i_a)$ and $(P_b,U_b,f_b,i_b),$
such that $\{Q_c:c\in I\}$ is
an analytic open cover of $P_a\cap P_b$.
Then, we can use the sheaf property of morphisms of perverse sheaves 
in the sense of Theorem \ref{sm2thm3}, 
to get $\de_{ab}$ as in \eq{beta} by gluing
$$\alpha_{U_b,f_b,W',h'}^{-1} \vert_{P_a\cap P_b\cap Q} \ci
\be_{W',h',V,g}\vert_{P_a\cap P_b\cap Q} \ci
\be^{-1}_{W,h,V,g} \vert_{P_a\cap P_b\cap Q}\ci
\alpha_{U_a,f_a,W,h} \vert_{P_a\cap P_b\cap Q}$$  on the open cover $\{Q_c:c\in I\}$.
Also, $\de_{ab}$ satisfy \eq{beta} for all $(Q,V,g,j)$, and this is
independent of choice of $(Q,V,g,j)$. This is because we can run the construction above with the family
$\bigl\{(P_a,U_a,f_a,i_a): a\in A\bigr\}\amalg\bigl\{(Q,V,g,j)\bigr\},$
yielding the same result.

\smallskip

Moreover, on $P_a\cap P_b\cap P_c$ we have
$\de_{bc}\ci \de_{ab} = \de_{ac}.$
This is because, given locally a polarization $\tilde \pi : \tilde S \ra F$ transverse to all of $\pi_a,$ $\pi_b,$ $\pi_c,$ then on $P_a \cap P_b \cap P_c \cap Q,$ we can easily check that
\begin{equation*}
\begin{split}
\ga_{bc}\ci \ga_{ab}\vert_{P_a\cap P_b\cap P_c\cap Q} =& ( \alpha_{U_c,f_c,W'',h''}^{-1} \ci \be_{W'',h'',V,g} \ci 
\be^{-1}_{W',h',V,g} \ci  \alpha_{U_b,f_b,W',h'}) \ci  \\&
(\alpha_{U_b,f_b,W',h'}^{-1} \ci \be_{W',h',V,g}^{-1} \ci
\be^{-1}_{W,h,V,g}\ci\alpha_{U_a,f_a,W,h}) =  \\ &
(\alpha_{U_c,f_c,W'',h''}^{-1} \ci \be_{W'',h'',V,g} \ci 
\be^{-1}_{W,h,V,g}\ci
\alpha_{U_a,f_a,W,h}) 
= \ga_{ac}\vert_{P_a\cap P_b\cap P_c\cap Q}.
\end{split}
\end{equation*}
As we can cover $P_a \cap P_b \cap P_c$ by such open $P_a \cap P_b \cap P_c \cap Q,$ we deduce that $\ga_{bc}\ci \ga_{ab} = \ga_{ac}$ by the sheaf property of morphisms of perverse sheaves in the sense of Theorem \ref{sm2thm3}.

\smallskip

In conclusion, we have an open cover of $X$ by $L$-charts $(P_a,U_a,f_a,i_a),$ and isomorphisms
\eq{beta.1},
satisfying $\ga_{bc}\ci \ga_{ab} = \ga_{ac}.$ So by stack property of perverse sheaves
in the sense of 
Theorem \ref{sm2thm3}(ii), 
we get that there exists $P_{L,M}^\bu$ in $\Perv(X)$, unique up to
canonical isomorphism, with isomorphisms
\begin{equation*}
\xymatrix{
\om_{P_a,U_a,f_a,i_a}:P_{L,M}^\bu\vert_{P_a}\ar[r]^{\cong\quad} &
i_a^*\bigl(\PV_{U_a,f_a}^\bu\bigr)\ot_{\Z_2} Q_{P_a,U_a,f_a,i_a}}
\end{equation*}
as in \eq{sm6eq6} for each $a\in A$, with
$\ga_{ab}\ci\om_{P_a,U_a,f_a,i_a}\vert_{P_a\cap P_b}=
\om_{P_b,U_b,f_b,i_b}\vert_{P_a\cap P_b}$ for all $a,b\in A$. Also,
\eq{sm6eq7}--\eq{sm6eq8} with $(P_a,U_a,f_a,i_a)$ in place of
$(P,U,f,i)$ define isomorphisms $\Si_{L,M}\vert_{P_a}$,
$\Tau_{L,M}\vert_{P_a}$ for each $a\in A$. The prescribed values for
$\Si_{L,M}\vert_{P_a},\Tau_{L,M}\vert_{P_a}$ and
$\Si_{L,M}\vert_{P_b},\Tau_{L,M}\vert_{P_b}$ agree when restricted
to $P_a\cap P_b$ for all $a,b\in A$. Hence, Theorem \ref{sm2thm3}(i)
gives unique isomorphisms $\Si_{L,M},\Tau_{L,M}$ in \eq{sm6eq5} such
that \eq{sm6eq7}--\eq{sm6eq8} commute with $(P_a,U_a,f_a,i_a)$ in
place of $(P,U,f,i)$ for all~$a\in A$.

\smallskip

Also, the whole construction is independent
of the choice of the family of $L$-charts and polarizations. 
This is because we can
suppose $\bigl\{(P_a,U_a,f_a,i_a):a\in A\bigr\}$ and $\bigl\{(\ti
P_a,\ti U_a,\ti f_a,\ti\imath_a):a\in\ti A\bigr\}$ are alternative
choices above, yielding $P_{L,M}^\bu,\Si_{L,M},\Tau_{L,M}$ and $\ti
P_{L,M}^\bu,\ti\Si_{L,M},\ti\Tau_{L,M}$. Then applying the same
construction to the family $\bigl\{(P_a,U_a,f_a,i_a):a\in
A\bigr\}\amalg\bigl\{(\ti P_a,\ti U_a,\ti f_a,\ti\imath_a):a\in\ti
A\bigr\}$ to get $\hat P_{L,M}^\bu$, we have canonical isomorphisms
$P_{L,M}^\bu\cong\hat P_{L,M}^\bu\cong\ti P_{L,M}^\bu$, which
identify $\Si_{L,M},\Tau_{L,M}$ with $\ti\Si_{L,M},\ti\Tau_{L,M}$.
Thus $P_{L,M}^\bu,\Si_{L,M},\Tau_{L,M}$ are independent of choices
up to canonical isomorphism.


\section{Analytic d-critical locus structure on complex Lagrangian intersections}
\label{s4}

Pantev et al.\ \cite{PTVV} show that derived intersections
$L\cap M$ of algebraic Lagrangians $L,M$ in an algebraic symplectic
manifold $(S,\om)$ have $(-1)$--shifted symplectic structures, so that
Theorem 6.6 in \cite{BBDJS} gives them the structure of algebraic
d-critical loci. Here, we will prove a complex analytic
version of this. Theorem \ref{sm6thm5}
states that the Lagrangian intersection $L\cap M$ of (oriented) complex
Lagrangians $L,M$ has the structure of an (oriented) complex
analytic d-critical locus. Notice at this point that we could have then used \cite[Thm 6.9]{BBDJS} to define a
perverse sheaf $P_{L,M}^\bu$ on $L\cap M$, instead of going through 
Theorem \ref{s1thm1} in \S\ref{s3}, but we wanted to provide a clear and direct proof 
about how to glue perverse sheaves on complex Lagrangian intersections in a 
complex analytic setup, and using only classical and symplectic geometry.
Note also that we cannot prove Theorem \ref{sm6thm5} by going via \cite{PTVV}, as they do not do a complex analytic version.

\smallskip

Here is the result of the section.

\begin{thm} Suppose $(S,\om)$ is a complex
symplectic manifold, and\/ $L,M$ are (oriented) complex Lagrangian submanifolds
in $S$. Then the intersection $X=L\cap M,$ as a complex analytic
subspace of\/ $S,$ extends naturally to a (oriented) complex analytic
d-critical locus\/ $(X,s)$. The canonical bundle $K_{X,s}$ 
in the sense of Theorem 3.5 in \S\ref{s4.1} is naturally isomorphic
to\/~$K_L\vert_{X^\red}\ot K_M\vert_{X^\red}$.
\label{sm6thm5}
\end{thm}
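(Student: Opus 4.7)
The plan is to use polarizations transverse to both $L$ and $M$ to produce local critical chart presentations of $X$, and to verify that these satisfy the gluing conditions required to define an analytic d-critical locus structure on $X$ in the sense of \cite[\S 2.1]{Joyc1}. This is the complex analytic analogue of \cite[Thm.\ 6.6]{BBDJS}.

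First I would cover $X$ by $L$-charts $(P_a,U_a,f_a,i_a)$ arising from a family of polarizations $\pi_a:S_a\ra E_a$ transverse to both $L$ and $M$, as in \S\ref{s3}. Each such chart gives $P_a\cong\Crit(f_a:U_a\ra\C)$ for $U_a\subseteq L$ open, and hence determines a local section $s_a$ of the sheaf $\cS^0_X$ from \cite[\S 2]{Joyc1} whose sections are locally represented by such critical function presentations modulo locally constant $\C$-valued functions. To see that $s_a\vert_{P_a\cap P_b}=s_b\vert_{P_a\cap P_b}$, which is a local question by the sheaf axiom for $\cS^0_X$, near any point of the overlap I would choose a further polarization $\ti\pi:\ti S\ra F$ transverse to $L$, $M$, $\pi_a$ and $\pi_b$, yielding an $M$-chart $(Q,V,g,j)$ and $LM$-charts relating the three. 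Proposition \ref{local} then produces closed embeddings of $U_a$ and $V$ (respectively of $U_b$ and $V$) into a common $LM$-chart $W$ such that $f_a$ and $g$ (respectively $f_b$ and $g$) pull back from a single holomorphic function $h$ on $W$; by Theorem \ref{sm5thm1}(i) this exhibits $f_a,f_b,g$ as pairwise stably equivalent via closed embeddings with non-degenerate normal quadratic form. Since stable equivalence preserves the associated section of $\cS^0_X$, the local sections agree on overlaps and glue to a global section $s\in\Ga(X,\cS^0_X)$, making $(X,s)$ an analytic d-critical locus.

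For the canonical bundle identification, recall from \cite[\S 2.4]{Joyc1} that $K_{X,s}$ is a line bundle on $X^\red$ equipped, in each critical chart $(P,U,f,i)$, with an isomorphism $\iota_{P,U,f,i}:K_{X,s}\vert_{P^\red}\ra i^*(K_U^{\ot 2})\vert_{P^\red}$, and these isomorphisms agree on overlaps via the line bundle isomorphism $J_\Phi$ of \eq{sm5eq9} whenever two charts are related by an embedding $\Phi$. In an $L$-chart with $U\subseteq L$ this gives $K_{X,s}\vert_{P^\red}\cong K_L^{\ot 2}\vert_{P^\red}$, and in an $M$-chart with $V\subseteq M$ it gives $K_{X,s}\vert_{Q^\red}\cong K_M^{\ot 2}\vert_{Q^\red}$. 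The polarization furnishes the canonical isomorphism $K_L\vert_X\cong K_M\vert_X$ of \eq{isocanbund.1}, under which both chart expressions match $K_L\vert_{X^\red}\ot K_M\vert_{X^\red}$; compatibility with an $LM$-chart $(R,W,h,k)$, where $K_W\cong K_L\boxtimes K_M$ and the normal quadratic form $q_{UW}$ of Theorem \ref{sm5thm1}(ii) controls the square of $\det(q_{UW})$, follows directly from the definition \eq{sm5eq9} of $J_\Phi$ applied to the two embeddings $\Phi:U\hookra W$, $\Psi:V\hookra W$ of Proposition \ref{local}. Consequently $K_{X,s}\cong K_L\vert_{X^\red}\ot K_M\vert_{X^\red}$, and the square roots $K_L^{1/2},K_M^{1/2}$ of Definition \ref{or.1} combine to give a canonical square root $K_L^{1/2}\ot K_M^{1/2}$ of $K_{X,s}$, so $(X,s)$ is canonically oriented whenever $L,M$ are.

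The main obstacle is the cocycle compatibility on higher overlaps needed to ensure the local sections $s_a$ truly glue: one must check that the identifications coming from different intermediary $M$-charts are canonical and independent of the choices involved. This is handled exactly as in the descent argument of \S\ref{s3.3}, with sections of $\cS^0_X$ in place of perverse sheaves; Proposition \ref{main2} provides the four-polarization associativity, and the sheaf property of $\cS^0_X$ then delivers both the gluing and the independence of $(X,s)$ from the chosen family of polarizations, by applying the construction to the union of two such families as in the closing argument of \S\ref{s3.3}.
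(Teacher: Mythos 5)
Your overall strategy matches the paper's proof in \S\ref{s4.2}: cover $X$ by $L$-charts coming from transverse polarizations, use $M$-charts and $LM$-charts via Proposition~\ref{local} to show the induced local sections of $\cSz_X$ agree on pairwise overlaps, and glue; for $K_{X,s}$, use Theorem~\ref{dc2thm3}(i) in each chart together with the polarization-induced identification $K_U\vert_X\cong K_L\vert_X\cong K_M\vert_X$.

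The one place you go off track is the closing paragraph, where you identify ``cocycle compatibility on higher overlaps'' as the main obstacle and invoke Proposition~\ref{main2} (four-polarization associativity) to resolve it. This is not needed, and the paper says so explicitly: since $\cSz_X$ is a sheaf, the local data $s_a$ are \emph{sections}, not objects equipped with gluing isomorphisms. Once you prove the honest equalities $s_a\vert_{P_a\cap P_b}=s_b\vert_{P_a\cap P_b}$ --- by covering $P_a\cap P_b$ with $M$-charts $Q$ and checking $s_a\vert_{P_a\cap P_b\cap Q}=s_Q\vert_{P_a\cap P_b\cap Q}=s_b\vert_{P_a\cap P_b\cap Q}$ --- the gluing axiom for sheaves already gives a unique global $s$, with no further coherence on triple overlaps to verify. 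The cocycle condition is only required when gluing \emph{objects} of a stack (as for $P^\bu_{L,M}$ in \S\ref{s3.3}), which is precisely why Proposition~\ref{main2} appears there and not here. So your argument is not wrong, but the ``main obstacle'' you flag does not exist, and the appeal to Proposition~\ref{main2} is superfluous. Independence of the resulting $s$ from the chosen family of polarizations is then handled, as you do say, by running the construction with a union of two families, which is the same device used in \S\ref{s3.3}.
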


Theorem \ref{sm6thm5} will be proved in \S\ref{s4.2}, while in \S\ref{s4.1}
we recall some material from \cite{Joyc1}.

\subsection{Background material on d-critical loci}
\label{s4.1}

Here are some of the main definitions and results on d-critical
loci, from Joyce \cite[Th.s 2.1, 2.13, 2.21 \& Def.s 2.3, 2.11,
2.23]{Joyc1}. 

\smallskip

The key idea of this section, d-critical loci, is explained in
Definition \ref{dc2def1} below. As a preliminary, we need to
associate a sheaf $\cS_X$ to each complex analytic space $X$, such
that (very roughly) sections of $\cS_X$ parametrize different ways
of writing $X$ as $\Crit(f)$ for $U$ a complex manifold and
$f:U\ra\C$ holomorphic.

\begin{thm} Let\/ $X$ be a complex analytic space. Then there
exists a sheaf\/ $\cS_X$ of\/ $\C$-vector spaces on $X,$ unique up
to canonical isomorphism, which is uniquely characterized by the
following two properties:
\begin{itemize}
\setlength{\itemsep}{0pt}
\setlength{\parsep}{0pt}
\item[{\bf(i)}] Suppose\/ $U$ is a complex manifold, $R$ is an open
subset in\/ $X,$ and\/ $i:R\hookra U$ is an embedding of\/ $R$
as a closed complex analytic subspace of\/ $U$. Then we have an
exact sequence of sheaves of\/ $\C$-vector spaces on $R\!:$
\e
\smash{\xymatrix@C=30pt{ 0 \ar[r] & I_{R,U} \ar[r] &
i^{-1}(\O_U) \ar[r]^{i^\sh} & \O_X\vert_R \ar[r] & 0, }}
\label{dc2eq1}
\e
where $\O_X,\O_U$ are the sheaves of holomorphic functions on
$X,U,$ and\/ $i^\sh$ is the morphism of sheaves of\/
$\C$-algebras on $R$ induced by $i,$ which is surjective as $i$
is an embedding, and\/ $I_{R,U}=\Ker(i^\sh)$ is the sheaf of
ideals in $i^{-1}(\O_U)$ of functions on $U$ near $i(R)$ which
vanish on $i(R)$.

There is an exact sequence of sheaves of\/ $\C$-vector spaces on
$R\!:$
\e
\xymatrix@C=20pt{ 0 \ar[r] & \cS_X\vert_R
\ar[rr]^(0.4){\io_{R,U}} &&
\displaystyle\frac{i^{-1}(\O_U)}{I_{R,U}^2} \ar[rr]^(0.4)\d &&
\displaystyle\frac{i^{-1}(T^*U)}{I_{R,U}\cdot i^{-1}(T^*U)}\,, }
\label{dc2eq2}
\e
where $\d$ maps $f+I_{R,U}^2\mapsto \d f+I_{R,U}\cdot
i^{-1}(T^*U)$.
\item[{\bf(ii)}] Let\/ $R,U,i,\io_{R,U}$ and\/
$S,V,j,\io_{S,V}$ be as in {\bf(i)} with\/ $R\subseteq
S\subseteq X,$ and suppose $\Phi:U\ra V$ is holomorphic with\/
$\Phi\ci i=j\vert_{R}$ as a morphism of complex analytic spaces
$R\ra V$. Then the following diagram of sheaves on $R$ commutes:
\e
\begin{gathered}
\xymatrix@C=12pt{ 0 \ar[r] & \cS_X\vert_{R} \ar[d]^\id
\ar[rrr]^(0.4){\io_{S,V}\vert_{R}} &&&
\displaystyle\frac{j^{-1}(\O_{V})}{I_{S,V}^2}\Big\vert_{R}
\ar@<-2ex>[d]^{i^{-1}(\Phi^\sh)_*} \ar[rr]^(0.4)\d &&
\displaystyle\frac{j^{-1}(T^*V)}{I_{S,V}\cdot
j^{-1}(T^*V)}\Big\vert_{R} \ar@<-2ex>[d]^{i^{-1}(\d\Phi)_*} \\
 0 \ar[r] & \cS_X\vert_{R} \ar[rrr]^(0.4){\io_{R,U}} &&&
\displaystyle\frac{i^{-1}(\O_{U})}{I_{R,U}^2} \ar[rr]^(0.4)\d &&
\displaystyle\frac{i^{-1}(T^*U)}{I_{R,U}\cdot i^{-1}(T^*U)}\,.
}\!\!\!\!\!\!\!{}
\end{gathered}
\label{dc2eq3}
\e
Here $\Phi:U\ra V$ induces $\Phi^\sh:\Phi^{-1}(\O_{V})\ra\O_{U}$
on $U,$ so we have
\e
i^{-1}(\Phi^\sh):j^{-1}(\O_{V})\vert_{R}=i^{-1}\ci
\Phi^{-1}(\O_{V})\longra i^{-1}(\O_{U}),
\label{dc2eq4}
\e
a morphism of sheaves of\/ $\C$-algebras on $R$. As $\Phi\ci
i=j\vert_{R},$ equation \eq{dc2eq4} maps $I_{S,V}\vert_{R}\ra
I_{R,U},$ and so maps $I_{S,V}^2\vert_{R}\ra I_{R,U}^2$. Thus
\eq{dc2eq4} induces the morphism in the second column of\/
\eq{dc2eq3}. Similarly, $\d\Phi:\Phi^{-1}(T^*V)\ra T^*U$ induces
the third column of\/~\eq{dc2eq3}.
\end{itemize}

There is a natural decomposition\/
$\cS_X=\cSz_X\op\C_X,$ where\/ $\C_X$ is the constant sheaf on
$X$ with fibre $\C,$ and the subsheaf\/ $\cSz_X\subset\cS_X$ is
the kernel of the composition
\e
\xymatrix@C=40pt{ \cS_X \ar[r]^{\be_X} & \O_X
\ar[r]^(0.47){i_X^\sh} & \O_{X^\red},  }
\label{dc2eq5}
\e
with\/ $X^\red$ the reduced complex analytic subspace of\/ $X,$
and\/ $i_X:X^\red\hookra X$ the inclusion.
\label{dc2thm1}
\end{thm}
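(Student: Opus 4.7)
The plan is to construct $\cS_X$ locally using the formula forced by (\ref{dc2eq2}), then show that construction is canonically independent of the embedding used, and finally glue to obtain a global sheaf. For each open $R \subseteq X$ admitting a closed embedding $i: R \hookrightarrow U$ into a complex manifold $U$ (which exists locally since $X$ is assumed locally of finite type), I would \emph{define} $\cS_X(R, U, i)$ to be the kernel of the differential $\d$ appearing in (\ref{dc2eq2}). This is the only candidate compatible with (\ref{dc2eq2}), so once existence is established, the uniqueness part of the theorem is automatic.

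The central step is to show that this sheaf is canonically independent of the choice of $(U, i)$. When there exists $\Phi: U \to V$ with $\Phi \ci i = j\vert_R$ as in (ii), the commutativity of the two right-hand squares of (\ref{dc2eq3}) is a direct check using that $\Phi^\sh$ sends $I_{S,V}$ into $I_{R,U}$ and that $\d \ci \Phi^\sh = \Phi^*\ci\d$; forcing the first column to be the identity then produces a canonical isomorphism $\cS_X(R, V, j)\vert_R \cong \cS_X(R, U, i)$. For two embeddings $i: R \hookrightarrow U$, $j: R \hookrightarrow V$ with no a priori morphism between the ambient manifolds, I would use the diagonal trick: the product embedding $(i, j): R \hookrightarrow U \t V$ is compatible with both $i$ and $j$ via the projections $\pi_U, \pi_V$, so two applications of the previous case give canonical isomorphisms $\cS_X(R, U, i) \cong \cS_X(R, U \t V, (i,j)) \cong \cS_X(R, V, j)$.

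The main obstacle will be verifying the cocycle condition on triple overlaps: given three local models $(R, U_a, i_a)$ for $a = 1, 2, 3$, one has to check that the three pairwise canonical isomorphisms obtained by the diagonal trick compose to the identity. This reduces, by inserting the triple product $U_1 \t U_2 \t U_3$ with the diagonal embedding and invoking (ii) on all six projections, to the strict functoriality of the construction $(U, \Phi) \mapsto \Phi^\sh$ under composition, which is itself a formal consequence of the chain rule. Once the cocycle holds, the local sheaves glue by standard descent for sheaves on the topological space $X$ to a global $\cS_X$, and properties (i) and (ii) hold by construction.

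Finally, for the decomposition $\cS_X = \cSz_X \op \C_X$: the morphism $\beta_X$ is induced locally by $[f + I_{R,U}^2] \mapsto [f + I_{R,U}] \in \O_X\vert_R$, which is well-defined since $I_{R,U}^2 \subseteq I_{R,U}$. Because any representative satisfies $\d f \in I_{R,U} \cdot i^{-1}(T^*U)$, the derivative of $f$ vanishes along $R$, so $f\vert_{R^\red}$ is locally constant and the composition (\ref{dc2eq5}) factors through the constant subsheaf $\C_X \subseteq \O_{X^\red}$. The splitting $\C_X \hookrightarrow \cS_X$ is given by sending a local constant $c$ to the class of the constant function $c$ on $U$, which lies in $\ker(\d)$ trivially; this is visibly a section of the surjection onto $\C_X$, yielding the direct sum decomposition.
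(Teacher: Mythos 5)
The paper does not prove this theorem: it is quoted verbatim from Joyce \cite[Th.\ 2.1]{Joyc1} as background material (see the preamble to \S\ref{s4.1}), so there is no in-paper proof to compare against. Judged on its own terms, your outline has the right overall architecture --- and it is the same one Joyce uses: take $\cS_X\vert_R$ to be $\ker\d$ in \eq{dc2eq2}, show this is canonically independent of the embedding $i:R\hookra U$ via the diagonal trick $(i,j):R\hookra U\t V$, verify the triple-overlap cocycle, and glue.

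But there is a genuine gap at the key step. Once you define the local candidates as kernels, commutativity of the right-hand part of \eq{dc2eq3} (which is indeed a direct chain-rule check) only tells you that $i^{-1}(\Phi^\sh)_*$ restricts to \emph{some} morphism $\ker(\d_{S,V})\vert_R\longra\ker(\d_{R,U})$; ``forcing the first column to be the identity'' does not produce an isomorphism --- you must \emph{prove} that this restricted morphism is one, and this is the substantive content of the theorem. Concretely you need: (a) injectivity --- if $g\in\O_V$ near $j(R)$ satisfies $\d g\in I_{S,V}\cdot T^*V$ and $\Phi^\sh(g)\in I_{R,U}^2$, then $g\in I_{S,V}^2$; and (b) surjectivity --- every $f\in\O_U$ with $\d f\in I_{R,U}\cdot T^*U$ lifts, modulo $I_{R,U}^2$, to some $g\in\O_V$ with $\d g\in I_{S,V}\cdot T^*V$. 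These are proved in \cite{Joyc1} by local coordinate/Taylor-expansion arguments, typically split into the two sub-cases $\Phi$ a closed embedding and $\Phi$ a submersion, into which the projections $\pi_U,\pi_V$ of your diagonal trick fall. Your proposal asserts the isomorphism rather than establishing it, so as written the local kernels have no reason to patch into a sheaf. The final paragraph on the decomposition $\cS_X=\cSz_X\op\C_X$ is essentially correct (note that $\d f$ vanishing along $i(R)$ gives $f\vert_{R^\red}$ locally constant first on the smooth locus of $R^\red$ and then everywhere by density and continuity).
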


Thus, if we can write $X=\Crit(f)$ for $f:U\ra\C$ holomorphic, then
we obtain a natural section $s\in H^0(\cS_X)$. Essentially
$s=f+I_{\d f}^2$, where $I_{\d f}\subseteq\O_U$ is the ideal
generated by $\d f$. Note that $f\vert_X=f+I_{\d f}$, so $s$
determines $f\vert_X$. Basically, $s$ remembers all of the
information about $f$ which makes sense intrinsically on $X$, rather
than on the ambient space~$U$.

\medskip

We can now define d-critical loci:

\begin{dfn} A {\it complex analytic\/} {\it d-critical locus\/}
is a pair $(X,s)$, where $X$ is a complex analytic space, and $s\in
H^0(\cSz_X)$ for $\cSz_X$ as in Theorem \ref{dc2thm1}, satisfying
the condition that for each $x\in X$, there exists an open
neighbourhood $R$ of $x$ in $X$, a complex manifold $U$, a
holomorphic function $f:U\ra\C$, and an embedding $i:R\hookra U$ of
$R$ as a closed complex analytic subspace of $U$, such that
$i(R)=\Crit(f)$ as complex analytic subspaces of $U$,
and~$\io_{R,U}(s\vert_R)=i^{-1}(f)+I_{R,U}^2$.
We call the quadruple $(R,U,f,i)$ a {\it critical
chart\/} on~$(X,s)$.

\smallskip

Let $(X,s)$ be a complex analytic d-critical locus, and $(R,U,f,i)$ a
critical chart on $(X,s)$. Let $U'\subseteq U$ be open, and
set $R'=i^{-1}(U')\subseteq R$, $i'=i\vert_{R'}:R'\hookra U'$, and
$f'=f\vert_{U'}$. Then $(R',U',f',i')$ is a critical chart on
$(X,s)$, and we call it a {\it subchart\/} of $(R,U,f,i)$. As a
shorthand we write~$(R',U',f',i')\subseteq (R,U,f,i)$.

\smallskip

Let $(R,U,f,i),(S,V,g,j)$ be critical charts on $(X,s)$, with
$R\subseteq S\subseteq X$. An {\it embedding\/} of $(R,U,f,i)$ in
$(S,V,g,j)$ is a locally closed embedding $\Phi:U\hookra V$ such
that $\Phi\ci i=j\vert_R$ and $f=g\ci\Phi$. As a shorthand we write
$\Phi: (R,U,f,i)\hookra(S,V,g,j)$. If $\Phi:(R,U,f,i)\hookra
(S,V,g,j)$ and $\Psi:(S,V,g,j)\hookra(T,W,h,k)$ are embeddings, then
$\Psi\ci\Phi:(R,U,i,e)\hookra(T,W,h,k)$ is also an embedding.
\label{dc2def1}
\end{dfn}

\begin{thm} Let\/ $(X,s)$ be a d-critical locus, and\/
$(R,U,f,i),(S,V,g,j)$ be critical charts on $(X,s)$. Then for each\/
$x\in R\cap S\subseteq X$ there exist subcharts
$(R',U',f',i')\subseteq(R,U,f,i),$ $(S',V',g',j')\subseteq
(S,V,g,j)$ with\/ $x\in R'\cap S'\subseteq X,$ a critical chart\/
$(T,W,h,k)$ on $(X,s),$ and embeddings $\Phi:(R',U',f',i')\hookra
(T,W,h,k),$ $\Psi:(S',V',g',j')\hookra(T,W,h,k)$.
\label{da6thm2}
\end{thm}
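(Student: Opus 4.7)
This is the standard comparison result for critical charts on a complex analytic d-critical locus, following Joyce \cite{Joyc1}; the plan is to construct $(T,W,h,k)$ explicitly near $x$ by a local product construction with auxiliary Lagrange-multiplier variables. First, since the statement is local near $x$, I pass to subcharts: pick small open neighbourhoods $U'\subseteq U$ of $i(x)$ and $V'\subseteq V$ of $j(x)$, set $R'=i^{-1}(U')$, $S'=j^{-1}(V')$, $f'=f\vert_{U'}$, $g'=g\vert_{V'}$, $i'=i\vert_{R'}$, $j'=j\vert_{S'}$, giving $(R',U',f',i')\subseteq(R,U,f,i)$ and $(S',V',g',j')\subseteq(S,V,g,j)$ with $x\in R'\cap S'$. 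The partial biholomorphism $j'\ci(i')^{-1}:R'\cap S'\to V'$ is a holomorphic map from a closed complex analytic subspace of $U'$; after further shrinking so that $U'$ is Stein, it extends to a holomorphic map $\phi:U'\to V'$, and symmetrically $i'\ci(j')^{-1}$ extends to $\psi:V'\to U'$.

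Set $T=R'\cup S'\subseteq X$ (open). The ambient manifold $W$ is a small open neighbourhood of $(i(x),j(x),0)$ in $U'\times V'\times\C^N$ for a suitable $N$, with coordinates $(u,v,\eta)$ in which $\eta$ plays the role of Lagrange multipliers. Define $k:T\hookra W$ by $(i,\phi\ci i,0)$ on $R'$ and $(\psi\ci j,j,0)$ on $S'$; these agree on $R'\cap S'$ by the extension property, giving a well-defined closed embedding. The holomorphic function $h:W\to\C$ is taken of the form
\[
h(u,v,\eta)=f(u)+g(v)-g(\phi(u))-f(\psi(v))+\langle\eta,\text{ linear in }v-\phi(u),\ u-\psi(v)\rangle,
\]
so that $\d_\eta h=0$ cuts out the diagonal constraints $v=\phi(u)$ and $u=\psi(v)$, while the cancellation terms $-g\ci\phi$ and $-f\ci\psi$ ensure that the embeddings $\Phi:U'\hookra W$, $u\mapsto(u,\phi(u),0)$, and $\Psi:V'\hookra W$, $v\mapsto(\psi(v),v,0)$, satisfy $h\ci\Phi=f$ and $h\ci\Psi=g$ exactly.

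A direct computation of the critical equations then shows $\Crit(h)=k(T)$: $\d_\eta h=0$ forces the point to lie on $\Phi(U')$ or $\Psi(V')$, after which $\d_u h$, $\d_v h$ reduce to the critical equations for $f$ and $g$, picking out $i(R')$ and $j(S')$. The chart identities $\Phi\ci i'=k\vert_{R'}$, $\Psi\ci j'=k\vert_{S'}$ are immediate, so $\Phi$ and $\Psi$ are embeddings of the subcharts in the sense of Definition \ref{dc2def1}. Finally, compatibility of the induced section $\io_{T,W}(s\vert_T)=k^{-1}(h)+I_{T,W}^2$ with the given d-critical section $s$ follows from Theorem \ref{dc2thm1}(ii) applied to $\Phi$ and $\Psi$, which reduces the desired equality on $T$ to the identities $\io_{R',U'}(s\vert_{R'})=(i')^{-1}(f')+I_{R',U'}^2$ and $\io_{S',V'}(s\vert_{S'})=(j')^{-1}(g')+I_{S',V'}^2$ already known from the two original charts.

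The main technical obstacle is ensuring $\Crit(h)=k(T)$ exactly; the naive choice $h=f\boxplus g$ on $W=U'\times V'$ gives $\Crit(h)\cong R'\times S'$, which is strictly larger than $k(T)$ and does not match an open subset of $X$. This is precisely why the auxiliary Lagrange-multiplier variables $\eta$ are needed: the extra directions force the two graphs to be the only locus of critical points, and the cancellation terms arrange that the restrictions $h\ci\Phi$, $h\ci\Psi$ recover $f$, $g$ on the nose rather than modulo $I_{R',U'}^2$, $I_{S',V'}^2$. A secondary subtlety, which requires one more shrinking, is that the holomorphic extension of $j\ci i^{-1}$ off the analytic subspace $i(R'\cap S')\subseteq U'$ exists only on a Stein neighbourhood; this is automatic after replacing $U',V'$ by small enough polydisc-like neighbourhoods of $i(x),j(x)$.
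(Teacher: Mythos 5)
First, on provenance: the paper does not actually prove this statement --- it is quoted verbatim in \S\ref{s4.1} as background from Joyce \cite{Joyc1} --- so your argument has to be measured against the proof in that reference. Your construction has a genuine gap, and the central formula fails. With $\Phi(u)=(u,\phi(u),0)$ your $h$ gives $h(\Phi(u))=f(u)+g(\phi(u))-g(\phi(u))-f(\psi(\phi(u)))=f(u)-f(\psi(\phi(u)))$, and $f\circ\psi\circ\phi$ is not identically zero on $U'$ (if $\psi\circ\phi=\mathrm{id}_{U'}$ it equals $f$ itself), so $h\circ\Phi\ne f$; symmetrically $h\circ\Psi\ne g$, and dropping one of the two cancellation terms repairs one identity while breaking the other. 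The multiplier term is also miscalibrated: being linear in $\eta$, the equations $\partial_\eta h=0$ cut out the locus where $v-\phi(u)$ and $u-\psi(v)$ vanish \emph{simultaneously} --- essentially the fixed locus of $\psi\circ\phi$, an analytic set which in general strictly contains $k(R'\cap S')$ --- and not the union $\Phi(U')\cup\Psi(V')$; moreover $\partial_v h=0$ then forces $\eta=-\d g(v)\ne 0$ away from $\Crit(g)$, so $\Crit(h)$ comes out as a translated copy of one graph rather than $k(R'\cup S')$. Underlying all of this is the difficulty your proposal never confronts: the d-critical structure only guarantees $f-g\circ\phi\in I_{R'\cap S',U'}^2$ (via Theorem \ref{dc2thm1}(ii)), not $f=g\circ\phi$, so no pointwise cancellation can make a single $h$ restrict to $f$ and to $g$ on two graphs whose intersection exceeds $k(R'\cap S')$.

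The actual proof in \cite{Joyc1} takes a different route, through minimal charts and quadratic stabilization: one first shows that every $x$ admits a critical chart $(P,Y,e,l)$ with $\dim Y=\dim T_xX$, and that such a minimal chart embeds, after shrinking, into any critical chart at $x$; Theorem \ref{sm5thm1}(i) then gives local normal forms $f\cong e\boxplus Q_1$ and $g\cong e\boxplus Q_2$ for nondegenerate quadratic forms $Q_1,Q_2$ in $n_1=\dim U-\dim Y$ and $n_2=\dim V-\dim Y$ variables, and one takes $W$ open in $Y'\times\C^{n_1+n_2}$ with $h=e\boxplus Q_1\boxplus Q_2$ and the evident embeddings $\mathrm{id}\times 0$. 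Your Stein-extension step producing $\phi$ and $\psi$ is sound and does occur in Joyce's arguments, but the reduction to a common minimal chart plus quadratic stabilization --- the heart of the proof --- is missing from your proposal, and the Lagrange-multiplier shortcut does not replace it.
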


\begin{thm} Let\/ $(X,s)$ be a complex analytic d-critical locus, and\/
$X^\red\subseteq X$ the associated reduced complex analytic space. Then there
exists a holomorphic line bundle $K_{X,s}$ on $X^\red$ which we call the
\begin{bfseries}canonical bundle\end{bfseries} of\/ $(X,s),$ which
is natural up to canonical isomorphism, and is characterized by the
following properties:
\begin{itemize}
\setlength{\itemsep}{0pt}
\setlength{\parsep}{0pt}
\item[{\bf(i)}] If\/ $(R,U,f,i)$ is a critical chart on
$(X,s),$ there is a natural isomorphism
\begin{equation*}
\io_{R,U,f,i}:K_{X,s}\vert_{R^\red}\longra
i^*\bigl(K_U^{\ot^2}\bigr)\vert_{R^\red},
\end{equation*}
where $K_U=\La^{\dim U}T^*U$ is the canonical bundle of\/ $U$ in
the usual sense.
\item[{\bf(ii)}] Let\/ $\Phi:(R,U,f,i)\hookra(S,V,g,j)$ be an
embedding of critical charts on $(X,s),$ and\/ $N_{\sst
UV},q_{\sst UV}$ be as in Proposition\/ {\rm\ref{sm5thm1}} and
set\/ $n=\dim V-\dim U$. Taking top exterior powers in the dual
of\/ \eq{sm5eq2} and pulling back to $R^\red$ using $i^*$ gives
an isomorphism of line bundles on $R^\red$
\e
\rho_{\sst UV}:i^*(K_U)\ot i^*(\La^nN_{\sst UV}^*)\vert_{R^\red}
\,{\buildrel\cong\over\longra}\,j^*(K_V)\vert_{R^\red}.
\label{dc2eq23}
\e
As $q_{\sst UV}$ is a nondegenerate quadratic form on
$i^*(N_{\sst UV}),$ its determinant $\det(q_{\sst UV})$ is a
nonvanishing section of\/ $i^*(\La^nN_{\sst UV}^*)^{\ot^2}$.
Then the following diagram of isomorphisms of line bundles on
$R^\red$ commutes:
\begin{equation*}
\xymatrix@C=150pt@R=19pt{ *+[r]{K_{X,s}\vert_{R^\red}}
\ar[r]_(0.4){\io_{R,U,f,i}} \ar[d]^{\io_{S,V,g,j}\vert_{R^\red}}
& *+[l]{i^*\bigl(K_U^{\ot^2}\bigr)\vert_{R^\red}}
\ar[d]_{\id_{i^*(K_U^2)}\ot\det(q_{\sst UV})\vert_{R^\red}} \\
*+[r]{j^*\bigl(K_V^{\ot^2}\bigr)\big\vert_{R^\red}}  &
*+[l]{i^*\bigl(K_U^{\ot^2}\bigr)\ot
i^*\bigl(\La^nN_{\sst UV}^*\bigr){}^{\ot^2}
\vert_{R^\red}.\!\!{}} \ar[l]_(0.6){\rho_{\sst UV}^{\ot^2}} }
\end{equation*}
\end{itemize}
\label{dc2thm3}
\end{thm}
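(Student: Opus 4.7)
\textbf{Plan of proof of Theorem \ref{dc2thm3}.} The strategy is to construct $K_{X,s}$ by descent from the local line bundles $L_{R,U,f,i}:=i^*(K_U^{\otimes 2})|_{R^\red}$ attached to each critical chart $(R,U,f,i)$ on $(X,s)$. The observation that makes the method work is Lemma \ref{sm5lem}: the isomorphism $J_\Phi$ of \eqref{sm5eq9} depends only on the restriction of an embedding to reduced subspaces, so all the gluing data is canonical and no additional choice needs to be tracked. Restricting to $X^\red$ (rather than $X$) is therefore essential.

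First I would cover $X$ by critical charts $\{(R_a,U_a,f_a,i_a):a\in A\}$, giving an analytic open cover $\{R_a^\red\}$ of $X^\red$ with local line bundles $L_a:=i_a^*(K_{U_a}^{\otimes 2})|_{R_a^\red}$. On a double overlap I would define the transition isomorphism $\phi_{ab}:L_a|_{R_a^\red\cap R_b^\red}\to L_b|_{R_a^\red\cap R_b^\red}$ locally as follows: by Theorem \ref{da6thm2}, near each point $x\in R_a\cap R_b$ there exist subcharts $(R_a',U_a',f_a',i_a')$, $(R_b',U_b',f_b',i_b')$ and a critical chart $(T,W,h,k)$ with embeddings $\Phi:(R_a',U_a',f_a',i_a')\hookra(T,W,h,k)$, $\Psi:(R_b',U_b',f_b',i_b')\hookra(T,W,h,k)$. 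Applying \eqref{sm5eq9} to $\Phi$ and $\Psi$ yields isomorphisms $J_\Phi:L_a\to k^*(K_W^{\otimes 2})|_{R_a^{\prime\red}}$, $J_\Psi:L_b\to k^*(K_W^{\otimes 2})|_{R_b^{\prime\red}}$, and on the reduced overlap I set $\phi_{ab}:=J_\Psi^{-1}\circ J_\Phi$.

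The two nontrivial verifications, both handled via the associativity identity \eqref{sm5eq10}, namely $J_{\Psi\circ\Phi}=\Phi|_{X^\red}^*(J_\Psi)\circ J_\Phi$, are: \textbf{(a)} $\phi_{ab}$ is independent of the auxiliary chart $(T,W,h,k)$ and the embeddings $\Phi,\Psi$; and \textbf{(b)} the cocycle identity $\phi_{bc}\circ\phi_{ab}=\phi_{ac}$ on triple overlaps. For \textbf{(a)}, given two candidate auxiliary charts, shrink and apply Theorem \ref{da6thm2} once more to embed both into a larger common chart; \eqref{sm5eq10} combined with Lemma \ref{sm5lem} then identifies the two presentations of $\phi_{ab}$. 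For \textbf{(b)}, iterate Theorem \ref{da6thm2} at a point of $R_a\cap R_b\cap R_c$ to produce a single critical chart receiving embeddings of suitable subcharts of all three; expressing each $\phi$ as a ratio of $J$-maps into this chart, the identity reduces to cancellation.

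I expect \textbf{(b)} to be the main obstacle: the combinatorics of arranging compatible subcharts and invoking three-fold associativity requires careful bookkeeping, but is an essentially formal consequence of \eqref{sm5eq10} and \eqref{sm5eq12}. Granting \textbf{(a)} and \textbf{(b)}, standard descent for holomorphic line bundles on the complex analytic space $X^\red$ produces a line bundle $K_{X,s}$, unique up to canonical isomorphism, together with isomorphisms $\iota_{R_a,U_a,f_a,i_a}$ as in (i); independence of the chosen cover follows by running the same construction on the union of two covers. Property (ii) is automatic from the construction: for any embedding $\Phi:(R,U,f,i)\hookra(S,V,g,j)$, take the auxiliary chart to be $(S,V,g,j)$ itself with $\Psi=\id$, so that $\phi=J_\Phi$, which by the very definition \eqref{sm5eq9} equals $\rho_{\sst UV}^{\otimes 2}\circ(\id\otimes\det(q_{\sst UV}))$, the composition whose commutativity is asserted in (ii).
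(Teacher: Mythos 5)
Your proposal is correct and is essentially the intended argument: the paper quotes this theorem as background from Joyce \cite{Joyc1} without reproducing the proof, and Joyce's construction of $K_{X,s}$ is exactly the descent you describe --- glue the local line bundles $i^*\bigl(K_U^{\ot^2}\bigr)\vert_{R^\red}$ along the isomorphisms $J_\Phi$ of \eq{sm5eq9}, with independence of choices and the cocycle condition coming from Lemma \ref{sm5lem}, the composition law \eq{sm5eq10}, and the chart-comparison Theorem \ref{da6thm2}. Property (ii) then holds by construction, as you note, since $J_\Phi$ is defined in \eq{sm5eq9} precisely as $\rho_{\sst UV}^{\ot^2}\ci\bigl(\id\ot\det(q_{\sst UV})\bigr)$.
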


\begin{dfn} Let $(X,s)$ be a complex analytic d-critical locus, and
$K_{X,s}$ its canonical bundle from Theorem \ref{dc2thm3}. An {\it
orientation\/} on $(X,s)$ is a choice of square root line bundle
$K_{X,s}^{1/2}$ for $K_{X,s}$ on $X^\red$. That is, an orientation
is a line bundle $L$ on $X^\red$, together with an isomorphism
$L^{\ot^2}=L\ot L\cong K_{X,s}$. A d-critical locus with an
orientation will be called an {\it oriented d-critical locus}.
\label{da6def2}
\end{dfn}

\subsection{Proof of Theorem \ref{sm6thm5}}
\label{s4.2}

Let $(S,\om)$ be a complex symplectic manifold, and $L,M\subset S$ 
two complex Lagrangian submanifolds of $S$. 
Given the complex analytic space $X=L\cap M,$ we must construct a section $s\in H^0(S_X^0)$ 
such that $(X,s)$ is a complex analytic d-critical locus. 
We use notation from \S\ref{s3}, and in particular the notions of $L$-chart, $M$-chart, and $LM$-chart.

\medskip

We claim that there is a unique d-critical structure $s$ on $X,$ such that
\begin{enumerate}
\item every $L$-chart $(P,U,f,i)$ from a polarization $\pi_1 : S_1 \ra E_1$ transverse to $L,M$ is a critical chart on $(X,s);$
\item every $M$-chart $(Q,V,g,j)$ from a polarization $\pi_2 : S_2 \ra E_2$ transverse to $L,M$ is a critical chart on $(X,s).$
\end{enumerate}

where $L$-charts and $M$-charts are defined using transverse polarizations.
To show this we note that the $L$-chart $(P,U,f,i)$ determines a d-critical structure $s_P$ on $P,$ and similarly the $M$-chart $(Q,V,g,j)$ determines a d-critical structure $s_Q$ on $Q.$

\smallskip

Next, for given $L$-charts and $M$-charts, we use the $LM$-charts $(R,W,h,k)$ and Proposition \ref{local} in \S\ref{s3} to show that $s_P\vert_{P\cap Q} = s_Q\vert_{P\cap Q}.$ 

\smallskip

Then, we choose a locally finite cover of $L$-charts $(P_a,U_a,f_a,i_a)$ for $a \in A$ covering $X,$ from polarizations transverse to $L,M.$
We choose $M$-charts $(Q_b, U_b, f_b, i_b)$ for $b \in B$ covering $X,$ from polarizations transverse to $L,M$ and all polarizations used to define the $(P_a,U_a,f_a,i_a).$
Then we get: 
$s_{P_a}\vert_{P_a\cap Q_b} = s_{Q_b}\vert_{P_a\cap Q_b}$
for all $a,b.$
Hence $
s_{P_a}\vert_{P_a\cap P_{a'}\cap Q_b} = s_{P_{a'}}\vert_{P_a\cap P_{a'}\cap Q_b}$
for all $a,a'\in A,$ $b\in B.$
As the $Q_b$ cover $X,$ we have $
s_{P_a}\vert_{P_a\cap P_{a'}} = s_{P_{a'}}\vert_{P_a\cap P_{a'}},$
for all $a,a' \in A.$

\smallskip

So there exists a unique section $s$ with $s\vert_{P_a} = s_{P_a},$ for all $a \in A,$ as $S_X^0$ is a sheaf.
Finally, following the same technique of \S\ref{s3.3}, the construction is independence of choices.

\medskip

For the second part of the theorem, let $(P,U,f,i),$ be
 a critical
chart on $(X,s)$. Then Theorem 3.5(i) gives a
natural isomorphism
\e
\io_{P,U,f,i}:K_{X,s}\vert_{P^\red}\longra
i^*\bigl(K_U^{\ot^2}\bigr)\vert_{P^\red}.
\label{da6eq7}
\e
Using \eq{isocanbund.1}, note that $K_U^2 \cong K_L\ot K_M,$ as the polarization $\pi$ identifies both $L,M$ with $U$ locally, giving isomorphisms $K_U\vert_X \cong K_L\vert_X\cong K_M\vert_X.$
Now comparing with \eq{detL}, we get $K_{X,s}\vert_{P^\red}\cong\det(\bL_{X})\vert_{P^\red}$
for each $(P,U,f,i),$ critical
chart on $(X,s)$. 
Comparing two critical charts, one can show that the canonical isomorphisms constructed above 
from two such charts
 are equal on the
overlap. Therefore the isomorphisms 
glue to give a global canonical isomorphism
$K_{X,s}\cong\det(\bL_{X})\vert_{X^\red}$. This completes the
proof of Theorem \ref{sm6thm5}.

\medskip

Note that we did not use $LMLM$ charts and Proposition 2.3 in \S\ref{s3.2}.
That is because we are constructing a section $s$ of a sheaf, (effectively, a morphism in a category), rather than a (perverse) sheaf (an object in a category), so 
basically we only have to go up to double overlaps, not triple overlaps.

\section{Relation with other works and further research}
\label{s5}

In this section we briefly discuss related work in the literature, and outline some ideas for future investigation.

\medskip

\begin{center} \textbf{The work of Behrend and Fantechi \cite{BeFa}} \end{center}  

\smallskip

The main inspiration for the present work was a
result by Behrend and Fantechi \cite{BeFa} in 2006.
Their project aims to construct and study Gerstenhaber and
Batalin--Vilkovisky structures on Lagrangian intersections. 
They consider a pair $L,M,$ of complex Lagrangian submanifolds
in a complex symplectic manifold $(S,\om)$, and they show that one can equip the graded
algebra $\STor^{\cO_S}_{-i}(\cO_L,\cO_M)$ with a Gerstenhaber bracket, and
the graded sheaf $\SExt^i_{\cO_S}(\cO_L,\cO_M)$ with a Batalin--Vilkovisky type differential.
The approach is the same as our approach, and in fact we were inspired by that: it is 
based on the holomorphic version of the Darboux theorem, that is, 
any holomorphic symplectic manifold is locally isomorphic
to a cotangent bundle, thus reducing the case of a general Lagrangian intersection
to the special case where one of the two Lagrangian is identified with the zero section of the cotangent bundle of
the symplectic manifold, and the second one is the graph of a 
holomorphic function locally defined on the first Lagrangian. 

\smallskip

In particular, Behrend and Fantechi \cite[Th.s 4.3 \& 5.2]{BeFa} claim to construct
canonical $\C$-linear differentials
\begin{equation*}
\d:\cE xt^i_{\O_S}(\O_L,\O_M)\longra \cE xt^{i+1}_{\O_S}(\O_L,\O_M)
\end{equation*}
with $\d^2=0$, such that $\bigl(\cE xt^*_{\O_S}(\O_L,\O_M),\d\bigr)$
is a constructible complex, called the {\it virtual de Rham complex} of the Lagrangian intersection $X.$ Conjecturally, $(\cE^\bullet,\rm d)$ categorifies Lagrangian intersection numbers, in the sense that
the constructible function 
$$p\ra \sum_i (-1)^{i-\dim(S)} \dim_\C \mathbb{H}^i_{\{ p \}}(X, (\cE xt^\bu_{\O_S}(\O_L,\O_M), \d)),$$
of fiberwise Euler characteristic of $(\cE xt^\bu_{\O_S}(\O_L,\O_M), \d)$ is equal to the well known Behrend function $\nu_X$ in \cite{Behr}, and so $$\chi(X,\nu_X)=\sum_i (-1)^{i-\dim(S)}\dim_\C \mathbb{H}^i (X,(\cE xt^\bu_{\O_S}(\O_L,\O_M), \d)).$$

Their main theorem \cite[Th.~4.3]{BeFa} claims that the locally defined de Rham differentials coming from the picture given by the holomorphic Darboux theorem, do not depend on the way one writes $S$ as a cotangent bundle, or, in other words, that $\d$ is independent of the chosen polarization of $S.$ Thus, the locally defined $\d$ glue, and they obtain a globally defined canonical de Rham type differential.
Unfortunately, there is a mistake in the proof. To fix this one should instead work with $\cE
xt^*_{\O_S}(K_L^{1/2},K_M^{1/2})$ for square roots
$\smash{K_L^{1/2},K_M^{1/2}}$ as in \S\ref{s3}. Also
the relation between their virtual de Rham complex and vanishing cycles relies   
on a conjecture of Kapranov \cite[Rmk. 2.12(b)]{Kapr}, which later turned out to true just over the ring of 
Laurent series - see Sabbah \cite[Th.~1.1]{Sabb} 
(deformation--quantization setting, see discussion below).

\medskip\clearpage

\begin{center} \textbf{The work of Kashiwara and Schapira \cite{KaSc2}} \end{center}  

\smallskip

Kashiwara and Schapira \cite{KaSc3} develop a theory of {\it
deformation quantization modules}, or {\it DQ-modules}, on a complex
symplectic manifold $(S,\om)$, which roughly may be regarded as
symplectic versions of $\cD$-modules. {\it Holonomic\/} DQ-modules
${\cal D}^\bu$ are supported on (possibly singular) complex
Lagrangians $L$ in $S$. If $L$ is a smooth, closed, complex
Lagrangian in $S$ and $K_L^{1/2}$ a square root of $K_L$, D'Agnolo
and Schapira \cite{DASc} show that there exists a simple holonomic
DQ-module ${\cal D}^\bu$ supported on~$L$.

\smallskip

If ${\cal D}^\bu,\cE^\bu$ are simple holonomic DQ-modules on $S$
supported on smooth Lagrangians $L,M$, then Kashiwara and Schapira
\cite{KaSc2} show that $R{\scr H}om({\cal D}^\bu,\cE^\bu)[n]$ is a
perverse sheaf on $S$ over the field $\C((\hbar))$, supported on
$X=L\cap M$. Pierre Schapira explained to the authors of \cite{BBDJS} how to prove
that $R{\scr H}om({\cal D}^\bu,\cE^\bu)[n]\cong P_{L,M}^\bu$, when
$P_{L,M}^\bu$ is defined over the base ring~$A=\C((\hbar))$.

\medskip

\begin{center} \textbf{The work of Baranovsky and Ginzburg \cite{BaGi}} \end{center}  

\smallskip

Apart from the mistake in the proof, Behrend and Fantechi's work \cite{BeFa} gives a new important 
understanding of a rich structure on Lagrangian intersection, investigated also by Baranovsky and 
Ginzburg \cite{BaGi}, who obtained analogous results 
for any pair of smooth coisotropic submanifolds $L,M$ of arbitrary
smooth Poisson algebraic varieties $S$ considering first order deformations of the structure sheaf $\cO_S$ 
to a sheaf of non-commutative algebras and of the structure sheaves $\cO_L$ 
and $\cO_M$ to sheaves of right and left modules
over the deformed algebra. The construction is canonically defined and it is independent of the choices of deformations involved.

\smallskip

The proof of their main result, Theorem 4.3.1 in \cite{BaGi}, shows that sometimes the Gerstenhaber and
Batalin--Vilkovisky structures on Tor or Ext are
well-defined globally. In their construction, this is the case, for instance, whenever in the setting of the proof of
\cite[Thm 4.3.1]{BaGi}, some cocycles are defined globally.

\medskip

\begin{center} \textbf{The work of Kapustin and Rozansky \cite{KR2}} \end{center}  

\smallskip

In \cite{KR2}, Kapustin and Rozansky study boundary conditions and defects in a three-dimensional topological sigma-model with a complex symplectic target space, the Rozansky-Witten model.
It turns out that this model has a deep relation with the problem of deformation quantization of the derived category of coherent sheaves on a complex manifold, regarded as a symmetric monoidal category, and in particular with categorified algebraic geometry in the sense of \cite{BFN,TV}. Namely, in the case when the target space of the Rozansky-Witten model has the form of the cotangent bundle $T^*Y$, where $Y$ is a complex manifold, the $2$-category of boundary conditions is very similar to the $2$-category of derived categorical sheaves on $Y$.

\smallskip

More precisely, given a complex symplectic manifold $(S,\om)$, Kapustin and
Rozansky conjecture the existence of an interesting 2-category,
with objects complex Lagrangians $L$ with $K^{1/2}_L$, such that 
$\Hom(L,M)$ is a $\Z_2$-periodic triangulated category, and if $L\cap M$ is locally 
modeled on $\Crit(f : U \ra \C)$ for $f : U \ra \C$ is a holomorphic function on a manifold $U,$ then $\Hom(L, M)$ is locally modeled on the matrix factorization category $MF(U, f )$ as in \cite{Orlov}. 

\medskip

\begin{center} \textbf{Matrix factorization and second categorification} \end{center}  

\smallskip

It would be interesting to construct a sheaf of $\Z_2$-periodic triangulated categories on Lagrangian intersection,
which, in the language of categorification, would yield a second categorification of the intersection numbers, the first being given by the hypercohomology of the perverse sheaf constructed in the present work. 

\smallskip

Also, this construction should be compatible with the Gerstenhaber and Batalin--Vilkovisky structures 
in the sense of \cite[Conj. 1.3.1]{BaGi}.

\medskip\clearpage

\begin{center} \textbf{Fukaya category for derived Lagrangian and d-critical loci} \end{center}  

\smallskip

It would be interesting to extend Theorem \ref{sm6thm5} to a class of `derived
Lagrangians' in~$(S,\om)$.

\smallskip

Given a pair $L,M,$ of derived complex Lagrangian submanifolds
in the sense of \cite{PTVV}
in a complex symplectic manifold $(S,\om),$ with
$\dim_\C S=2n,$
Joyce conjectures that there should be some kind of
approximate comparison
\begin{equation*}
\bH^k(P_{L,M}^\bu)\approx HF^{k+n}(L,M),
\end{equation*}
where $HF^*(L,M)$ is the {\it Lagrangian Floer cohomology\/} of
Fukaya, Oh, Ohta and Ono \cite{FOOO}. Some of the authors of \cite{BBDJS}
are working on defining a `Fukaya category' of (derived)
complex Lagrangians in a complex symplectic manifold, using
$\bH^*(P_{L,M}^\bu)$ as morphisms.
See \cite{BBDJS} for a more detailed discussion.


\begin{thebibliography}{99}
\addcontentsline{toc}{section}{References}

\bibitem[BaGi]{BaGi} V. Baranovsky and V. Ginzburg,
{\it Gerstenhaber--Batalin--Vilkoviski structures on coisotropic intersections},
arXiv:0907.0037. 

\bibitem[Behr]{Behr} K. Behrend, {\it Donaldson--Thomas type invariants
via microlocal geometry}, Ann. of Math. 170 (2009), 1307--1338.
math.AG/0507523.

\bibitem[BeFa]{BeFa} K. Behrend and B. Fantechi, {\it Gerstenhaber and
Batalin--Vilkovisky structures on Lagrangian intersections}, pages
1--47 in {\it Algebra, arithmetic, and geometry}, Progr. Math. 269,
Birkh\"auser, Boston, MA, 2009.

\bibitem[BBD]{BBD} A.A. Beilinson, J. Bernstein and P. Deligne, {\it
Faisceaux pervers,} Ast\'erisque 100, 1982.

\bibitem[BFN]{BFN}D. Ben-Zvi, J. Francis and D. Nadler, {\it Integral transforms and Drinfeld centers in derived algebraic geometry,} arXiv:0805.0157.

\bibitem[BBDJS]{BBDJS}C. Brav, V. Bussi, D. Dupont, D. Joyce, and B. Szendroi, {\it Symmetries and stabilization for sheaves of vanishing cycles}, arXiv:1211.3259v2, 2013.

\bibitem[BBJ]{BBJ} C. Brav, V. Bussi and D. Joyce, {\it A Darboux
theorem for derived schemes with shifted symplectic structure},
arXiv:1305.6302, 2013.

\bibitem[BJM]{BJM} V. Bussi, D. Joyce and S. Meinhardt, {\it On motivic
vanishing cycles of critical loci}, arXiv:1305.6428, 2013.

\bibitem[DASc]{DASc} A. D'Agnolo and P. Schapira, {\it Quantization of
complex Lagrangian submanifolds}, Advances in Math. 213 (2007),
358--379. math.AG/0506064.

\bibitem[Dimc]{Dimc} A. Dimca, {\it Sheaves in Topology}, Universitext,
Springer-Verlag, Berlin, 2004.

\bibitem[FOOO]{FOOO} K. Fukaya, Y.-G. Oh, H. Ohta and K. Ono,
{\it Lagrangian intersection Floer theory --- anomaly and
obstruction}, Parts I \& II. AMS/IP Studies in Advanced Mathematics,
46.1 \& 46.2, A.M.S./International Press, 2009.

\bibitem[GaHa]{GaHa} T. Gaffney and H. Hauser, {\it Characterizing
singularities of varieties and of mappings}, Invent. math. 81
(1985), 427--447.

\bibitem[HTT]{HTT} R. Hotta, T. Tanisaki and K. Takeuchi, {\it
$\cD$-modules, perverse sheaves, and representation theory}, Progr.
Math. 236, Birkh\"auser, Boston, MA, 2008.

\bibitem[Joyc1]{Joyc1} D. Joyce, {\it A classical model for derived
critical loci}, arXiv:1304.4508, 2013.

\bibitem[Kapr]{Kapr}M. Kapranov, {\it DG-modules over de Rham algebra},

\bibitem[KR2]{KR2}A. Kapustin and L. Rozansky, {\it Three-dimensional topological Þeld theory and symplectic
algebraic geometry II} (2009). Preprint arXiv:0909.3643.

\bibitem[KaSc1]{KaSc1} M. Kashiwara and P. Schapira, {\it
Sheaves on manifolds}, Grundlehren der Math. Wiss. 292,
Springer-Verlag, Berlin, 1990.

\bibitem[KaSc2]{KaSc2} M. Kashiwara and P. Schapira, {\it Constructibility
and duality for simple modules on symplectic manifolds}, Amer. J.
Math. 130 (2008), 207--237. math.QA/0512047.

\bibitem[KaSc3]{KaSc3} M. Kashiwara and P. Schapira, {\it
Deformation quantization modules},\hfil\break Ast\'erisque 345,
2012.

\bibitem[KoSo1]{KoSo1} M. Kontsevich and Y. Soibelman, {\it Stability
structures, motivic Donaldson--Thomas invariants and cluster
transformations}, arXiv:0811.2435, 2008.

\bibitem[KoSo2]{KoSo2} M. Kontsevich and Y. Soibelman, {\it Cohomological
Hall algebra, exponential Hodge structures and motivic
Donaldson--Thomas invariants}, Commun. Number Theory Phys. 5 (2011),
231--352. arXiv:1006.2706.

\bibitem[Mass1]{Mass1} D. Massey, {\it The Sebastiani--Thom isomorphism in
the derived category}, Compositio Math. 125 (2001), 353--362.
math.AG/9908101.

\bibitem[Mass]{Mass3} D. Massey, {\it Natural commuting of vanishing
cycles and the Verdier dual}, arXiv:0908.2799, 2009.

\bibitem[Orlov]{Orlov}D. O. Orlov, {\it Triangulated categories of singularities and D-branes in Landau-Ginzburg
models}, Tr. Mat. Inst. Steklova 246 (2004), no. Algebr. Geom. Metody, Svyazi i Prilozh.,
240Ð262

\bibitem[PTVV]{PTVV} T. Pantev, B. To\"en, M. Vaqui\'e and G. Vezzosi,
{\it Quantization and derived moduli spaces I: Shifted symplectic
structures}, arXiv:1111.3209, 2011.

\bibitem[Sabb]{Sabb} C. Sabbah, {\it On a twisted de Rham complex, II},
arXiv:1012.3818, 2010.

\bibitem[Schu]{Schu} J. Sch\"urmann, {\it Topology of singular spaces and
constructible sheaves}, Monografie Matematyczne 63, Birkh\"auser,
Basel, 2003.

\bibitem[TV]{TV}B. Toen and C. Vezzosi, {\it A note on Chern character, 
loop spaces and derived algebraic geometry,} arXiv:0804.1274.

\bibitem[Thom]{Thom} R.P. Thomas, {\it A holomorphic Casson invariant for
Calabi--Yau $3$--folds, and bundles on $K3$ fibrations}, J. Diff.
Geom. 54 (2000), 367--438. \hfil\break math.AG/9806111.

\end{thebibliography}
\end{document}